\newtheorem{thm}{Theorem}[section]
\newtheorem{prop}[thm]{Proposition}
\newtheorem{cor}[thm]{Corollary}
\newtheorem{lemma}[thm]{Lemma}
\newtheorem{rk}[thm]{Remark}
\theoremstyle{definition}
\newtheorem{ex}[thm]{Example}
\newenvironment{pf}{\begin{proof}}{\end{proof}}
\newcommand{\s}{\smallskip}
\newcommand{\tmod}[1]{\; (#1)}
\newcommand{\Z}{\mathbb{Z}}
\newcommand{\cudz}{ ,\hspace{-0.37mm},}
\newcommand{\zz}{\mathbb{Z}}
\title[Periodic self-homeomorphisms of bordered surfaces]{On topological classification
of finite cyclic  actions on bordered surfaces}
\author{G. Gromadzki, S. Hirose, B. Szepietowski}
\address{G. Gromadzki, B. Szepietowski: Institute of Mathematics, Faculty of Mathematics, Physics and Informatics, University of Gda\'nsk, 80-308 Gda\'nsk, Poland \vspace{-4mm}}
\address{S. Hirose: Department of Mathematics, Faculty of Science and Technology, Tokyo University of Science, Noda, Chiba, 278-8510, Japan }
\email{grom@mat.ug.edu.pl, hirose\b{ }susumu@ma.noda.tus.ac.jp, blaszep@mat.ug.edu.pl}
\thanks{G. Gromadzki and B. Szepietowski supported by National Science Centre, Poland by grant NCN
2015/17/B/ST1/03235.
S. Hirose supported by Grant-in-Aid for Scientific Research (C) (No.16K05156), Japan Society for the Promotion of Science. This paper was initiated, and
partially written, when G. Gromadzki visited Max-Planck Mathematical Institute in Bonn and Tokyo University of Science.}
\begin{document}
\begin{abstract}
In \cite{Hirose}, {the second author}   showed that, except for a few cases, the
order $N$ of a cyclic group of self-homeomorphisms of a closed
orientable topological surface $S_g$ of genus $g \geq 2$ determines
the group up to a topological conjugation, provided that $N\geq 3g$. The first author et al.  undertook in \cite{BCGH} a more general problem of topological classification of
such group actions for  $N>2(g-1)$. In \cite{GS} we considered the analogous problem for closed non-orientable surfaces, and
 in \cite{GSZ} - the problem of classification of cyclic actions generated by an orientation reversing self-homeomorphism.
The present paper, in which we
deal with topological classification of  actions on bordered surfaces of finite cyclic
groups of order $N>p-1$, where $p$ is the
algebraic genus of the surface, completes our project of topological  classification of {\it \cudz large"}  cyclic actions on compact surfaces.
We apply obtained results to solve the problem of  uniqueness of the actions realising the solutions of the so called minimum genus and maximum order problems for bordered surfaces found in \cite{BEGG}.
\end{abstract}
\maketitle

\section{Introduction}
{By an  action of a  group $G$ on a  surface $S$ we understand an embedding of $G$ into the group $\mathrm{Homeo}(S)$ of homeomor\-phisms of $S$. Two such actions are {\it topologically conjugate}, or of the same {\it topological type}, if the images of $G$ are conjugate in $\mathrm{Homeo}(S)$.}

In \cite{Hirose} {it was shown} that, except for a few cases, the order
$N$ of the finite cyclic group $\zz_N$ acting on a closed orientable
topological surface $S_g$ of genus $g \geq 2$ determines the topological type of the action, provided that $N\geq 3g$. In \cite{BCGH} the first author {et al.} undertook a more general problem of topological classification of such
actions for $N>2(g-1)$.
This is an essential extension, because between $3g$
and $4g+2$  only $3g+1,3g+2,3g+4$ and $4g$ can stand as the
period of a single self-homeomorphism of $S_g$,  whereas there are infinitely many
rational values of $a,b$ such that for $N=ag+b$ we have $N>2g-2$ and $N$ is the
period of a self-homeomorphisms of $S_g$ for infinitely many $g$.  In \cite{GSZ} we  considered analogous  problem for cyclic actions generated by an orientation reversing self-homeomorphism,
while in \cite{GS} - a similar problem for closed non-orientable surfaces, obtaining a classification of topological types of action of $\zz_N$ on a surface $S$  in function of a possible type of the quotient
orbifold $S/\Z_N$, provided that $N$ is sufficiently big.

\smallskip
The present paper, in which we
deal with topological classification of  actions of $\zz_N$ on a bordered surface of
algebraic genus $p$, where $N>p-1$, completes our project of topological  classification of big cyclic actions on compact surfaces.
The lower bound $p-1$ for the order of an action is essential for two  reasons.
The first is that we again cover a quite  large class of actions,
since there are infinitely many rational values of $a,b$, for which
there are infinitely many values of $p$, such that a bordered surface of algebraic genus $p$ admits a cyclic action of order $N=ap+b$ and $N>p-1$.
 The second reason for the bound $N>p-1$ is that it is
satisfied for all the actions realising the solutions of the so called minimum genus
and maximum order problems for  bordered surfaces found in \cite{BEGG}, and the
 question about their topological rigidity partially motivated the present  paper.

\smallskip
{
Our results can be seen as a  topological classification of cyclic group actions of order $N$ on  bordered surfaces of algebraic genus  $p \leq N$. Another  problem, suggested by the referee of this paper, would be to obtain a similar classification for surfaces of large  genera i.e.  $p$ bigger than $N$.
We remark that there are many results in the literature  about the spectrum of genera of surfaces admitting a given finite group  as a group  of self-homeomorphisms.  As an example of such results in the case of  closed orientable surfaces let us mention the important paper \cite{Kulk} by Kulkarni. While we believe that it shouldn't be difficult to obtain similar results for bordered surfaces, it seems that it would be a rather difficult problem to classify topologically actions of order $N<p$. The main reason is that the orbit spaces which occur in the case $N<p$ may have much bigger and much more complicated mapping class groups (see section \ref{ss:MCG} for a definition) than for $p \leq N$,  and in such a case our method, based on a good understanding of these mapping class groups, is not effective.}

\smallskip There are two more interesting features of the  actions considered in this paper.
The first is that finite group actions on compact surfaces of negative Euler characteristic  may be realised by analytic  actions on Riemann surfaces, or dianalytic actions on  Klein surfaces, due to the Hurwitz-Nielsen,
Kerkjarto  and Alling-Greenleaf geometrizations mentioned in Subsection \ref{geometrization}.
The loci in the moduli spaces of Klein surfaces composed of the points classifying  the surfaces dianalitically realizing the actions considered here have dimensions $1,2$ or $3$ (this follows from  Lemma \ref{big o-r}, formula (\ref{can-fuchs}) {and classical formula of Fricke and Klein for dimension of Teichm\"uller spaces of Fuchsian groups c.f. Theorem 0.3.2 in \cite{BEGG})}.
This is  similar as in the case of actions on unbordered non-orientable  Klein surfaces \cite{GS},   or orientation reversing automorphisms of classical Riemann surfaces \cite{GSZ}, but  in contrast
 to the classical case of  orientation-preserving cyclic actions of order $>2g-2$ described in \cite{BCGH}, where the loci of such structures in the moduli space  are $0$-dimensional, which means, in particular, that  the topological type of an action of an orientation-preserving self-homeomorphism of such order usually uniquely determines  the conformal type of a Riemann surface on which it acts as an automorphism. Finally, observe that our results can be stated in terms of birational actions on real algebraic curves due to the functorial equivalence between bordered Klein surfaces and such curves described in \cite{AG}.

\smallskip
{This paper is organized as follows. In Section \ref{sec:statement} we state our main results. Section \ref{sec:preli} contains
necessary preliminaries concerning finite topological actions on bordered surfaces from the combinatorial point of view. In particular we review non-euclidean crystallographic groups. In Section \ref{sec:types}
we determine  the possible topological types of the orbit space (orbifold) of a cyclic  action of order $N$ on a bordered surface of algebraic  genus $p<N+1$. We obtain ten different topological  types here, all of which are either a disc or an annulus or a M\"obius band, with some cone points in the interior and some corner points on the boundary.
In Section \ref{ss:MCG} we review the relationship between the groups of automorphisms of non-euclidean crystallographic groups and mapping class groups. We also compute the mapping class groups of three surfaces:
 once-punctured annulus, once-punctured M\"obius band and  twice-punctured disc, which are needed for Section \ref{sec:main}, where we prove our main results. 
Finally, in Section \ref{sec:minmax} we apply our results to study topological rigidity of 
the solutions of the so-called minimum genus and maximum order problems for cyclic actions on bordered surfaces, solved over 30 years ago in \cite{BEGG}.
}

\section{Statement of the main results}\label{sec:statement}
Suppose that a cyclic group of order $N$ acts on a bordered surface $S$ of algebraic genus $p$, where $N>p-1$. We show in Section \ref{sec:types} that the orbit space $S/{\Z_N}$ is one of the following orbifolds:
\begin{itemize}
\item[(1)] disc with $6$ corner points,
\item[(2)] annulus  with $2$ corner points,
\item[(3)] M\"obius band  with $2$ corner points,
\item[(4)] $1$-punctured disc with $2$ corner points,
\item[(5)] $1$-punctured disc with $4$ corner points,
\item[(6)] $1$-punctured  M\"obius band,
\item[(7)] $2$-punctured disc,
\item[(8)] $1$-punctured annulus,
\item[(9)] $3$-punctured disc,
\item[(10)] $2$-punctured disc with $2$ corner points.
\end{itemize}
Our classification of cyclic actions of big order is split into ten cases and the results are presented in ten consecutive subsections. Their proofs are given in Section  \ref{sec:main}  
which is also divided in ten subsections with the same titles for the reader's convenience.
Throughout the whole paper $\varphi$ will denote the Euler totient function.
We will also need similar function $\psi$ defined in
\cite{BCGH} as $\psi(1)=1$ and given a prime factorization
$C=p_1^{\alpha_1}\cdots p_r^{\alpha_r}>1$
$$\psi(C)=\prod_{i=1}^r(p_i-2)p_i^{\alpha_i-1}\;$$
Observe the analogy with the Euler function $\varphi$ which
is defined for such $C$ as
$$\varphi(C)=\prod_{i=1}^r(p_i-1)p_i^{\alpha_i-1}.$$

\subsection{Actions with a disc with 6 corner points as the quotient orbifold.}
\begin{thm}\label{d6}
There is an action of a cyclic group of order $N$ on a bordered
surface $S$ with a disc having $6$ corner points as the quotient
orbifold if and only if $N=2$ and $S$ is a $3$-holed sphere.
Furthermore, such action is unique up to topological conjugation.
\end{thm}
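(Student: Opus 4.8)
The plan is to translate the problem into the language of non-euclidean crystallographic (NEC) groups. A disc with $6$ corner points is the orbit space $\mathbb{H}/\Lambda$ of an NEC group $\Lambda$ with signature $(0;+;[-];\{(m_1,\ldots,m_6)\})$, and an action of $\Z_N$ with this quotient corresponds to an epimorphism $\theta\colon\Lambda\to\Z_N$ whose kernel is a bordered surface group, the surface being $S=\mathbb{H}/\ker\theta$. The first thing I would record is that for this signature the connecting generator $e$ of the single period cycle satisfies $e=1$ (the long relation degenerates to $e=1$ because the genus is zero, there are no cone points, and there is exactly one period cycle), so that $\Lambda$ is generated by the six reflections $c_0,\ldots,c_5$ alone.

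From here the restriction on $N$ is immediate. Since $\Lambda$ is generated by involutions and $\Z_N$ is cyclic, the image $\theta(\Lambda)$ is generated by elements of order at most $2$, hence lies in the unique subgroup of $\Z_N$ of order $\gcd(2,N)\le 2$; surjectivity of $\theta$ then forces $N=2$. This is the conceptual heart of the statement: a disc-with-corners quotient is a reflection-group quotient, and the only nontrivial cyclic quotient of a group generated by involutions is $\Z_2$.

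Next I would pin down the remaining data by area considerations. The Riemann--Hurwitz relation $\mu(\ker\theta)=N\,\mu(\Lambda)$ reads
$$2\pi(p-1)=2\cdot 2\pi\Bigl(2-\tfrac12\sum_{j=1}^{6}\frac{1}{m_j}\Bigr),$$
so that $\sum_{j}1/m_j=5-p$. Because each $m_j\ge 2$ we have $\sum_j 1/m_j\le 3$, giving $p\ge 2$, while the hypothesis $N>p-1$ forces $\sum_j 1/m_j>2$ and hence $p<3$. Thus $p=2$, $\sum_j 1/m_j=3$, and every $m_j=2$. It then remains to determine the admissible $\theta\colon\Lambda\to\Z_2$: smoothness of the cover over a corner of order $2$, governed by the dihedral group $\langle c_{i-1},c_i\rangle$, holds precisely when $\theta(c_{i-1})\ne\theta(c_i)$ (otherwise $\ker\theta$ acquires either a cone point or a corner). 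Around the boundary of even length $6$ this forces the alternating assignment, which is unique up to the rotation automorphism of $\Lambda$ cycling the $c_i$ and up to $\mathrm{Aut}(\Z_2)=1$; in particular such a $\theta$ exists, which settles the existence half.

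Finally I would identify $S=\mathbb{H}/\ker\theta$ and read off uniqueness. The orbifold Euler characteristic is $1-\tfrac12\cdot 6\cdot\tfrac12=-\tfrac12$, so $\chi(S)=2\cdot(-\tfrac12)=-1$. Taking the two hexagons $H\cup c_1H$ as a fundamental domain for $\ker\theta$ and following the three surviving mirror sides, each together with its $c_1$-translate closes up into a single boundary circle, so $\partial S$ has exactly $3$ components; a surface with $\chi=-1$ and $3$ boundary circles is necessarily the orientable $3$-holed sphere. Uniqueness up to topological conjugation then follows because the two alternating epimorphisms are interchanged by an automorphism of $\Lambda$ realised geometrically (a rotation of the disc permuting the corners), so all such actions are conjugate. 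I expect the one genuinely fiddly step to be the identification of the topological type of the kernel surface --- in particular the boundary-component count and orientability --- since the quick invariant $\chi=-1$ alone is shared by several bordered surfaces; the cleanest route is the fundamental-domain/side-pairing bookkeeping sketched above.
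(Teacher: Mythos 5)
Your proposal is correct and takes essentially the same route as the paper: pass to the NEC group generated by the six reflections, note that a cyclic quotient of a group generated by involutions forces $N=2$, normalize the BSK-epimorphism to the alternating pattern (unique up to the rotation automorphism of $\Lambda$, which is exactly the content of Lemma \ref{cycle perm}), and identify $S$ as the $3$-holed sphere. The only difference is cosmetic: you re-derive by hand, via Riemann--Hurwitz together with $N>p-1$ and via fundamental-domain bookkeeping, the two facts the paper imports from Lemmas \ref{aut rep} and \ref{sign k} (that all corner orders equal $2$, and that $S$ has three boundary components).
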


\subsection{Actions with annulus with 2 corner points as the quotient orbifold.}
\begin{thm}\label{ann2}
There is an action of a cyclic group of order $N$ on a bordered
surface $S$ with an annulus having two corner points as the
quotient orbifold if and only if $N$ is even
and $S$ is one of
the following surfaces:
\begin{itemize}
\item $N/2$-holed Klein bottle,
\item $N/2$-holed torus, where $N/2$ is odd,
\item $(N/2+1)$-holed projective plane,
\item $(N/2+2)$-holed sphere, where $N/2$ is odd.
\end{itemize}
Furthermore, such action is unique up to topological conjugation for
each of these surfaces.
\end{thm}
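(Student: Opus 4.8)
The plan is to realise such actions through the standard dictionary between finite group actions on surfaces and epimorphisms of NEC groups: an action of $\Z_N$ whose quotient orbifold is the annulus with two corner points corresponds to a bordered surface NEC group $\Gamma$ (the ``surface group'' of $S$) contained in an NEC group $\Lambda$ with $\Lambda/\Gamma\cong\Z_N$, where $\Lambda$ carries the signature of the quotient orbifold. First I would pin down this signature. The two corner points lie on the boundary, and the link period at a corner is the order of the image under $\theta\colon\Lambda\to\Z_N$ of the product $c_{i-1}c_i$ of two consecutive canonical reflections; since $\Z_N$ is abelian each reflection maps into the $2$-torsion $\{0,N/2\}$, so each such product has order at most $2$. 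Hence both link periods equal $2$, and (because a single period would die under the connecting relation in an abelian quotient) the two corners must sit on the same boundary component. Thus $\Lambda$ has signature $(0;+;[-];\{(2,2),(-)\})$.

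Next I would apply the Riemann--Hurwitz relation $\mu(\Gamma)=N\,\mu(\Lambda)$. The bordered surface group of algebraic genus $p$ has $\mu(\Gamma)=2\pi(p-1)$, while for the signature above $\mu(\Lambda)/2\pi=\tfrac12\big[(1-\tfrac12)+(1-\tfrac12)\big]=\tfrac12$. Therefore $p-1=N/2$, which forces $N$ even and gives $p=N/2+1$; in particular no action exists for odd $N$, since then every reflection maps to $0$ and no link period $2$ could be realised. One checks directly that the four surfaces in the statement all have algebraic genus $N/2+1$, so this is the expected value.

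The heart of the argument is to enumerate the epimorphisms $\theta\colon\Lambda\to\Z_N$ with torsion free kernel and then read off topological types. Writing $\theta(c_0)=\theta(c_2)=\alpha$, $\theta(c_1)=\alpha'$, $\theta(d)=\delta$ and $\theta(e_1)=t$ (so $\theta(e_2)=-t$), the conditions become $\{\alpha,\alpha'\}=\{0,N/2\}$ (torsion freeness at the two corners), $\delta\in\{0,N/2\}$, and $\langle t,N/2\rangle=\Z_N$, i.e.\ $\gcd(t,N/2)=1$ (surjectivity). For each such $\theta$ I would compute the topological type of the surface $S$ (i.e.\ of $\Gamma=\ker\theta$) using the standard formulas for the number of boundary components and the orientability of $\ker\theta$ (as in \cite{BEGG} and Section~\ref{sec:preli}). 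The period cycle $(2,2)$ always contributes $N/2$ boundary components, while the empty period cycle contributes $\gcd(t,N)$ of them if $\delta=0$ and none if $\delta=N/2$ (in the latter case that mirror folds into the interior). Orientability is governed by the parity of $t$: $S$ is orientable precisely when $t$ is even, and since $\gcd(t,N/2)=1$ this can happen only when $N/2$ is odd. Sorting the resulting pairs (orientability, number of boundary components) reproduces exactly the four surfaces, with the torus and sphere cases subject to $N/2$ odd, as stated; existence follows by exhibiting one explicit $\theta$ for each.

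Finally, for uniqueness up to topological conjugation I would invoke the correspondence recalled in Section~\ref{ss:MCG}, identifying conjugacy classes of actions with orbits of such epimorphisms under $\mathrm{Aut}(\Z_N)\times\mathrm{Aut}(\Lambda)$, the second factor acting through the mapping class group of the orbifold. Composition with a unit of $\Z_N$ fixes $N/2$, hence $\alpha,\alpha',\delta$, and moves $t$ through all elements with the same $\gcd(t,N)$; this already collapses each combination of invariants to a single $\mathrm{Aut}(\Z_N)$-orbit, except for the two labellings $\alpha=0$ and $\alpha=N/2$ of the corner reflections. These two are interchanged by the automorphism of $\Lambda$ cyclically rotating the $(2,2)$ period cycle (legitimate because the two link periods coincide), realised by a rotation of the annulus and so lying in the mapping class group. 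I expect this to be the main obstacle: one must verify that the mapping class group of the annulus with two corner points is generated by such symmetries (together with boundary twists that act trivially on $\theta$), so that it performs exactly the identification $\alpha=0\leftrightarrow\alpha=N/2$ and no further collapsing, the topological invariants already separating the four surfaces. Combined with existence, this yields the asserted classification and rigidity.
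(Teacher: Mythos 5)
Your proposal is correct and takes essentially the same route as the paper's proof: the same signature $(0;+;[\;];\{(\,),(2,2)\})$, normalization of the $(2,2)$ cycle by the rotation automorphism (the paper's Lemma \ref{cycle perm}), the action of $\Z_N^\ast$ on $t=\theta(e_1)$, and the boundary-component and orientability count of Lemma \ref{sign k} combined with the Hurwitz--Riemann formula, leading to the same four normal forms. One remark: the ``main obstacle'' in your last paragraph is not actually an obstacle --- uniqueness only requires transitivity of the equivalence within each fixed homeomorphism type, which your units-plus-rotation argument already provides, while ``further collapsing'' across the four surfaces is impossible since equivalent BSK-maps have homeomorphic kernel quotients; hence no generating set for the full mapping class group (or $\mathrm{Out}(\Lambda)$) of this orbifold is needed, and indeed the paper never computes it for this signature.
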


\subsection{Actions with M\"obius band with 2 corner points as the quotient orbifold.}
\begin{thm}\label{mb2}
There is an action of a cyclic group of order $N$ on a bordered
surface $S$ having a M\"obius band with $2$ corner points as the
quotient orbifold if and only if $N$ is even
and $S$ is
either $N/2$-holed Klein bottle or $N/2$-holed torus, the latter
being possible only for odd $N/2$.
Furthermore, in both cases the
action is unique up to topological conjugation.
\end{thm}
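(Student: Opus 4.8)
The plan is to model the action, exactly as in Theorems \ref{d6} and \ref{ann2}, by a pair $(\Lambda,\theta)$ consisting of an NEC group $\Lambda$ with the signature of a M\"obius band carrying two corner points, namely $(1;-;[-];\{(m_1,m_2)\})$, together with an epimorphism $\theta\colon\Lambda\to\zz_N$ whose kernel is a bordered surface group, so that $S$ and its quotient are $\cH/\ker\theta$ and $\cH/\Lambda$. Denote by $c_0,c_1,c_2$ the canonical reflections of the single period cycle, by $e$ its connecting generator and by $d$ the glide reflection of the crosscap; the canonical relations give $c_2=e^{-1}c_0e$ and $ed^2=1$. Since $\zz_N$ is abelian, $\theta(c_2)=\theta(c_0)$, and each $\theta(c_i)$ has order dividing $2$, so $\theta(c_0c_1)$ and $\theta(c_1c_2)$ have order at most $2$. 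Because no corner point may survive in the surface $S$, the condition that $\ker\theta$ be a bordered surface group forces these orders to equal $m_1$ and $m_2$; hence $m_1=m_2=2$, the group order $N$ is even, and $\{\theta(c_0),\theta(c_1)\}=\{0,N/2\}$, leaving two cases according to which reflection is sent to $0$.

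Next I would extract the numerical invariants. The Riemann--Hurwitz relation $\mu(\ker\theta)=N\,\ml$, together with the area of this signature, yields algebraic genus $p=N/2+1$, so that the bound $N>p-1$ holds automatically and imposes no further restriction. A direct count of the components of $\partial S$, obtained by following the lifts of the reflector arcs around the period cycle and using that the two arcs mapped to $0$ are glued through the connecting generator and folded at the two corners, shows that $S$ has exactly $k=N/2$ boundary components in either case. Since a compact surface with $\chi(S)=-N/2$ and $N/2$ boundary components is either the $N/2$-holed torus (if orientable) or the $N/2$-holed Klein bottle (if not), it remains only to decide the sign of $\ker\theta$.

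The sign determination is the heart of the argument and the step I expect to be most delicate. I would settle it by the standard analysis of the orientability of the kernel of an NEC epimorphism: passing to the canonical Fuchsian subgroup $\Lambda^+$ one identifies $\cH/(\ker\theta\cap\Lambda^+)$ as a closed orientable surface of genus $N/2+1$ carrying the antianalytic involution induced by a reflection, and $S$ is orientable precisely when this involution is dividing. The obstruction is not the Harnack inequality, which is vacuous with these invariants, but the existence of an admissible $\theta$ realising a dividing involution; tracking the induced parities of $\theta(c_0),\theta(c_1)$ and $\theta(d)$ one finds that this is possible exactly when $N/2$ is odd. Thus the orientable case (the $N/2$-holed torus) occurs if and only if $N/2$ is odd, while the non-orientable case (the $N/2$-holed Klein bottle) occurs for every even $N$; exhibiting an explicit $\theta$ in each admissible case then proves existence.

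Finally, for uniqueness up to topological conjugation I would show that any two admissible epimorphisms producing the same surface differ by an automorphism of $\Lambda$ that is induced by a self-homeomorphism of the quotient orbifold, composed with an automorphism of $\zz_N$. The only free parameter is the image $\theta(d)=t$, constrained by surjectivity to satisfy $\gcd(t,N/2)=1$; automorphisms of $\zz_N$ normalise $t$, and the remaining ambiguity, including the interchange of the two reflection cases, is absorbed by the mapping class group of the orbifold computed in Section \ref{ss:MCG}. Verifying that these moves act transitively on the admissible $\theta$'s for each fixed topological type of $S$ completes the proof.
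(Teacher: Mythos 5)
Your setup (the NEC group with signature $(1;-;[\;];\{(2,2)\})$, the forced values $m_1=m_2=2$, $N$ even, $\{\theta(c_0),\theta(c_1)\}=\{0,N/2\}$, the computations $p=N/2+1$, $k=N/2$, and the identification of the two candidate surfaces from $\chi(S)=-N/2$) is correct and parallels the paper. But the step you yourself call ``the heart of the argument'' --- deciding orientability of $\ker\theta$ --- is asserted, not proved, and this is a genuine gap. Recasting orientability as ``the antiholomorphic involution on the complex double is dividing'' is a correct equivalence but not a criterion: it gives no way to read the answer off $\theta$, and ``tracking the induced parities $\ldots$ one finds that this is possible exactly when $N/2$ is odd'' is the conclusion of the theorem, not an argument for it. The paper decides this concretely via part (a) of Lemma \ref{sign k}: $S$ is non-orientable if and only if $\Gamma=\ker\theta$ contains a non-orientable word. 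After normalizing $\theta(c_0)=\theta(c_2)=N/2$, $\theta(c_1)=0$ by Lemma \ref{cycle perm}, surjectivity forces $\theta(d)=a$ with $\langle a,N/2\rangle=\zz_N$, so after an automorphism of $\zz_N$ either $\theta(d)=1$, or $\theta(d)=2$ with $N/2$ odd. In the first case $d^{N/2}c_0\in\Gamma$ is orientation-reversing exactly when $N/2$ is even (and a parity count of the letters $d^{\pm1},c_0,c_2$ in any element of $\Gamma$ shows no non-orientable word exists when $N/2$ is odd); in the second case $d^{N/2}\in\Gamma$ is a non-orientable word. This computation is what simultaneously proves that the Klein bottle occurs for \emph{every} even $N$, that the torus occurs exactly when $N/2$ is odd, and it exhibits the explicit epimorphisms whose existence you defer; your framing addresses only when the orientable case can occur, not why both types coexist for $N/2$ odd nor why only the Klein bottle occurs for $N/2$ even.

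The uniqueness step has a second, smaller but concrete, defect: you invoke ``the mapping class group of the orbifold computed in Section \ref{ss:MCG}'', but that section computes $\mathrm{Out}(\Lambda)$ only for the signatures $(1;-;[m];\{(\,)\})$, $(0;+;[m];\{(\,),(\,)\})$ and $(0;+;[m,n];\{(\,)\})$; the M\"obius band with two corner points is not among them, so there is nothing there to absorb your ambiguity. The paper needs no such machinery: Lemma \ref{cycle perm} gives an explicit automorphism of $\Lambda$ interchanging your two reflection cases, and after that the only freedom is $\theta(d)$, which $\mathrm{Aut}(\zz_N)$ normalizes to $1$ or $2$ (units of $\zz_N$ are odd, hence fix $N/2$ and do not disturb the reflection images). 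Since these two normal forms yield different surfaces, there is exactly one class per surface. Your closing sentence ``verifying that these moves act transitively $\ldots$ completes the proof'' is precisely this short verification, left undone and attributed to a tool that does not apply.
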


\subsection{Actions with a 1-punctured disc with 2 corner points as the quotient orbifold.}
\begin{thm}\label{d12}
There is an action of a cyclic group of order $N$ on a bordered
surface $S$ with a disc having one cone point of order $m$ and $2$
corner points as the quotient orbifold
if and only if either
\begin{itemize}
\item $m$ is even, $N=m$ and $S$ is $N/2$-holed projective plane, or
\item $m$ is odd, $N=2m$ and $S$ is $N/2$-holed sphere.
\end{itemize}
Furthermore
in both cases the action is unique up to topological conjugation.
\end{thm}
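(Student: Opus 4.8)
The plan is to translate the problem into the language of non-euclidean crystallographic (NEC) groups, as set up in the preliminaries: an action of $\Z_N$ with the prescribed quotient orbifold corresponds to an NEC group $\La$ with signature $(0;+;[m];\{(n_1,n_2)\})$ together with an epimorphism $\theta\colon\La\to\Z_N$ whose kernel is a bordered surface group, and two such actions are topologically conjugate exactly when the epimorphisms differ by an automorphism of $\Z_N$ and an automorphism of $\La$ induced by a homeomorphism of the orbifold. Writing $x_1,e_1,c_0,c_1,c_2$ for the canonical generators, with relations $x_1^m=1$, $c_i^2=1$, $(c_0c_1)^{n_1}=(c_1c_2)^{n_2}=1$, $e_1^{-1}c_0e_1=c_2$ and $x_1e_1=1$, the rotations at the two corner points are $\theta(c_0c_1)$ and $\theta(c_1c_2)$. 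Since each $\theta(c_i)$ is trivial or an involution in the cyclic group $\Z_N$, these are differences of involutions and hence have order at most $2$; a genuine corner point then forces $n_1=n_2=2$ and $N$ even, so $\La$ has signature $(0;+;[m];\{(2,2)\})$.

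Next I would enumerate the admissible $\theta$. As $\Z_N$ is abelian and $c_2=e_1^{-1}c_0e_1$, we get $\theta(c_0)=\theta(c_2)=:u$ and $\theta(c_1)=:v$ with $2u=2v=0$; the surface-kernel conditions say $a:=\theta(x_1)$ has order exactly $m$ while the corner rotations $u-v,\ v-u$ have order $2$, so $\{u,v\}=\{0,N/2\}$, and surjectivity reads $\langle a,N/2\rangle=\Z_N$. Splitting on the parity of $m$ gives the two alternatives: if $m$ is even then $N/2\in\langle a\rangle$, forcing $\langle a\rangle=\Z_N$ and $N=m$; if $m$ is odd then $\langle a\rangle\cap\langle N/2\rangle=\{0\}$, whence $|\langle a,N/2\rangle|=2m$ and $N=2m$. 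This establishes the ``only if'' constraints. To identify $S$ I would compute $\mu(\La)/2\pi=1-\tfrac1m-\tfrac14-\tfrac14=\tfrac12-\tfrac1m$ and apply Riemann--Hurwitz $p-1=N\,\mu(\La)/2\pi$, obtaining $p=N/2$ in the even case and $p=N/2-1$ in the odd case; the number of boundary components and the orientability are then read off from the standard formulas for the signature of $\ker\theta$ (the boundary being governed by the reflections with trivial $\theta$-image), yielding $N/2$ holes in both cases, with $S$ non-orientable of genus one (projective plane) when $m$ is even and orientable of genus zero (sphere) when $m$ is odd. Running the construction in reverse, for each admissible $(m,N)$ the explicit $\theta$ with $(u,v)=(0,N/2)$ passes the smoothness and surjectivity tests, which settles the ``if'' direction.

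The main work is the uniqueness statement. Up to $\mathrm{Aut}(\Z_N)$, which acts transitively on the elements of order $m$ and fixes the unique involution $N/2$, the triple $(a,u,v)$ reduces to a normalized $a$ together with one of the two assignments $(u,v)=(0,N/2)$ or $(N/2,0)$. These two are interchanged by composing $\theta$ with the automorphism of $\La$ that cyclically shifts the reflections of the period cycle, $c_0\mapsto c_1\mapsto c_2$, which is available precisely because the link periods $(2,2)$ are invariant under this shift. The crux is that this automorphism is geometrically realizable: it is induced by the half-turn of the disc interchanging the two boundary arcs and the two corner points while fixing the interior cone point, hence lies in the image of the mapping class group of the orbifold and yields a genuine topological conjugacy. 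I expect this realizability step --- matching abstract NEC automorphisms with homeomorphisms of the orbifold --- to be the delicate point in general, and the reason the relevant mapping class groups are analysed separately; here, however, the orbifold is simple enough that the explicit rotation suffices, giving a single orbit of epimorphisms and hence a unique action up to topological conjugation for each surface.
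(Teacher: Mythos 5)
Your proof is correct and follows essentially the same route as the paper's: reduce to BSK-epimorphisms $\theta\colon\Lambda\to\Z_N$ for $\Lambda$ with signature $(0;+;[m];\{(2,2)\})$, deduce $N=m$ or $N=2m$ from the parity of $m$, normalize the reflection images (your cyclic-shift automorphism is exactly the paper's Lemma \ref{cycle perm}) and the image of $x$ via $\mathrm{Aut}(\Z_N)$ to get uniqueness, and identify $S$ from the Hurwitz--Riemann formula and the boundary count of Lemma \ref{sign k}. The only inessential differences are that you re-derive the link periods being $2$ directly rather than quoting Lemma \ref{aut rep}, and that the geometric-realizability step you flag as the crux is not actually needed in this framework, since the equivalence criterion (diagram (\ref{def top equiv}), i.e.\ Proposition 2.2 of \cite{BCCS}) already admits arbitrary automorphisms of $\Lambda$.
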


\subsection{Actions with a 1-punctured disc with 4 corner points as the quotient orbifold.}
\begin{thm}\label{d14}
There is an action of a cyclic group of order $N$ on a bordered
surface with a disc having  one cone point of order $m$ and $4$
corner points as the quotient orbifold
if and only if either
\begin{itemize}
\item $m$ is even, $N=m$ and $S$ is $N$-holed projective plane, or
\item $m$ is odd, $N=2m$ and $S$ is $N$-holed sphere.
\end{itemize}
Furthermore, in both cases the action is unique up to topological
conjugation. \hfill $\square$
\end{thm}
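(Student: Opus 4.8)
The plan is to realise the action through a short exact sequence $1\to\Gamma\to\La\to\Z_N\to1$, where $\La$ is an NEC group uniformising the quotient orbifold $\cO=S/\Z_N$ and $\Gamma=\ker\theta$ is the bordered surface group of $S$, and then to classify the smooth (surface-kernel) epimorphisms $\theta$ up to the equivalence corresponding to topological conjugacy. Since the quotient is a disc with one interior cone point of order $m$ and $4$ corner points, and since for an abelian quotient every reflection $c$ satisfies $2\,\theta(c)=0$ so that $\theta(c)\in\{0,N/2\}$, the image $\theta(c_{i-1}c_i)=\theta(c_{i-1})+\theta(c_i)$ has order at most $2$; hence every link period equals $2$ and $\La$ has signature $(0;+;[m];\{(2,2,2,2)\})$, with canonical generators $x,e,c_0,c_1,c_2,c_3$ (and $c_4=e^{-1}c_0e$) subject to $x^m=1$, $c_i^2=1$, $(c_{i-1}c_i)^2=1$ and the long relation $xe=1$. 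This is exactly the signature of Theorem \ref{d12} with the period cycle $(2,2)$ replaced by $(2,2,2,2)$, so the whole argument will run parallel to that case, the only new input being the count of boundary components.

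First I would determine the admissible pairs $(N,m)$. Smoothness forces $\theta(x)$ to have order $m$, so $\langle\theta(x)\rangle$ is the unique subgroup of order $m$ in $\Z_N$; smoothness at the corners forces $\theta(c_0),\dots,\theta(c_3)$ to alternate between $0$ and $N/2$, which already requires $N$ to be even. Surjectivity then reads $\langle\theta(x),N/2\rangle=\Z_N$, i.e. $\gcd(N/m,N/2)=1$, and a short computation gives $N=m$ for $m$ even and $N=2m$ for $m$ odd, exactly as stated; this step is identical to Theorem \ref{d12}. The algebraic genus is read off from Riemann--Hurwitz, $p-1=N\cdot\ml/2\pi=N(1-1/m)$, giving $p=N$ (resp. $p=N-1$).

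Next I would identify $S$. The boundary components of $S$ are the reflection circles of $\Gamma$, which lie only over the two mirror arcs whose reflection maps to $0$; tracing how the $N$ lifts of such an arc are glued at its two adjacent corners (each gluing being a translation by $N/2$) shows that they pair up into $N/2$ boundary circles, so that $k'=N$ in total. This is the one genuine difference from Theorem \ref{d12}: there the single $0$-arc yields $N/2$ holes, whereas the two $0$-arcs of the $(2,2,2,2)$ cycle yield $N$. With $k'=N$ fixed, orientability and genus are then forced by the parity of $\chi(S)=1-p$: for $m$ even, $\chi=1-N$ with $N$ holes cannot be orientable (that would require a half-integer genus), so $S$ is the non-orientable genus-one surface with $N$ holes, the $N$-holed projective plane; for $m$ odd, $\chi=2-N$ with $N$ holes forces orientable genus $0$, the $N$-holed sphere.

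Finally, for uniqueness I would invoke the correspondence of Section \ref{ss:MCG} between topological types of actions and orbits of surface-kernel epimorphisms under $\mathrm{Aut}(\Z_N)$ together with the automorphisms of $\La$ induced by homeomorphisms of $\cO$. Using $\mathrm{Aut}(\Z_N)$ I can normalise $\theta(x)$ to a fixed generator of the order-$m$ subgroup; one checks that the stabiliser of this generator in $\mathrm{Aut}(\Z_N)$ is trivial and in particular fixes $0$ and $N/2$, so the only remaining ambiguity is the value $\theta(c_0)\in\{0,N/2\}$, i.e. which of the two alternating patterns the reflections follow. These two patterns are interchanged by the quarter-turn rotation of the disc about the cone point, an orbifold homeomorphism that cyclically permutes the four order-$2$ corners and fixes $x$; the induced automorphism of $\La$ shifts $c_i\mapsto c_{i+1}$ and carries one pattern to the other. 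This exhausts the freedom in $\theta$ and gives uniqueness. The main obstacle is exactly this last step: one must be sure that the algebraic equivalence of epimorphisms is topological conjugacy, i.e. that the period-cycle rotation is genuinely realised by a homeomorphism of $\cO$ compatible with the chosen normalisation of $\theta(x)$. It is here that the relationship between $\mathrm{Aut}(\La)$ and mapping class groups is needed, although for this disc-type orbifold it reduces to the evident dihedral symmetry of the four corners rather than to the more delicate computations required for the annulus and M\"obius-band quotients.
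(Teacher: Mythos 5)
Your proposal is correct and follows essentially the same route as the paper, whose proof of Theorem \ref{d14} simply declares it ``almost identical'' to that of Theorem \ref{d12}: classify BSK-epimorphisms $\theta\colon\Lambda\to\Z_N$ for the signature $(0;+;[m];\{(2,2,2,2)\})$, get $N=\mathrm{lcm}(2,m)$ from surjectivity, normalise $\theta(x)$ to $1$ or $2$ by $\mathrm{Aut}(\Z_N)$, and kill the remaining ambiguity $\theta(c_0)\in\{0,N/2\}$ by the index-shifting automorphism of Lemma \ref{cycle perm} --- which is exactly the automorphism your ``quarter-turn rotation'' induces, and which by \cite[Proposition 2.2]{BCCS} need not even be realised geometrically for the equivalence of diagram (\ref{def top equiv}). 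The only minor divergence is in determining orientability: the paper uses the non-orientable-word criterion of Lemma \ref{sign k}(a) (e.g.\ $x^{N/2}c_0\in\ker\theta$ when $N=m$), whereas you force it by an Euler-characteristic parity argument from $k=N$ and the algebraic genus; both are valid, and your count $k=N$ of boundary components agrees with Lemma \ref{sign k}(b).
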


\subsection{Actions with 1-punctured M\"obius band as the quotient orbifold.}
We consider the actions on orientable and non-orientable surfaces separately.
\begin{thm}\label{mb1ori}
There is an action of a cyclic group of order $N$ on a bordered
orientable surface  with $k$ boundary components, with a M\"obius band
having $1$ cone point of order $m$ as the quotient orbifold, if and
only if $k$ divides $N$, $N=2\mathrm{lcm}(m,N/k)$, and either
$t=(m,N/k)$ is odd, or $N/2t$ is even. Furthermore
in such case the  algebraic genus of the surface is equal to $1+(m-1)N/m$ and
there are
$\lceil \varphi(t)/2 \rceil$ conjugacy classes of such actions.
\end{thm}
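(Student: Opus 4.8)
The plan is to translate the problem into the combinatorics of non-euclidean crystallographic (NEC) groups, following the dictionary recalled in Sections~\ref{sec:preli} and \ref{ss:MCG}. A quotient orbifold equal to a M\"obius band with one cone point of order $m$ corresponds to the NEC group $\La$ with signature $(1;-;[m];\{(\,)\})$, which admits a presentation
\begin{equation*}
\La=\langle x_1,e_1,c,d \mid x_1^m,\ c^2,\ e_1^{-1}ce_1c^{-1},\ x_1e_1d^2\rangle,
\end{equation*}
where $x_1$ is elliptic, $e_1$ is orientation-preserving, $c$ is a reflection and $d$ is a glide-reflection. By the correspondence between topological actions and epimorphisms, an action of $\Z_N$ with this quotient is the same datum as a surface-kernel epimorphism $\theta\colon\La\to\Z_N$ with orientable bordered kernel, taken up to the equivalence generated by $\mathrm{Aut}(\Z_N)$ and by the automorphisms of $\La$ induced by the mapping class group of the once-punctured M\"obius band. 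Writing $\Z_N$ additively and setting $a=\theta(x_1)$, $b=\theta(e_1)$, $d'=\theta(d)$, the whole proof reduces to understanding these epimorphisms.

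Second, I would read off the numerical conditions from the presentation and the geometric requirements. The surface-kernel condition at the cone point forces $\mathrm{ord}(a)=m$; that $S$ be a genuine bordered surface forces the reflection into the kernel, $\theta(c)=0$; and the single empty period cycle yields one orbit of boundary components whose stabiliser is $\langle b\rangle$, so that $k=N/\mathrm{ord}(b)$, whence $k\mid N$ and $\mathrm{ord}(b)=N/k$. The long relation becomes $a+b+2d'=0$. Surjectivity together with orientability of $S$ then force the image of the glide-reflection to supply a missing factor of two, i.e. $N=2\,\mathrm{lcm}(m,N/k)$; and a short parity analysis of $a+b+2d'=0$ --- in which $a,b$ are forced into the index-two subgroup while $d'$ must lie outside it --- carried out by means of Lemma~\ref{big o-r}, shows that a valid orientable $\theta$ exists precisely when $t=(m,N/k)$ is odd or $N/2t$ is even. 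This establishes the ``if and only if'' part.

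Third, the algebraic genus is a one-line Euler-characteristic computation. For the signature $(1;-;[m];\{(\,)\})$ one has $\mu(\La)/2\pi=1-2+1+(1-1/m)=(m-1)/m$, so by the Riemann--Hurwitz relation $\mu(\ker\theta)=N\mu(\La)$ the orientable bordered kernel, of genus $h$ with $k$ boundary components, satisfies $2h-2+k=(m-1)N/m$; since its algebraic genus is $p=2h+k-1$, we obtain $p=1+(m-1)N/m$, as claimed.

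Finally, the count of conjugacy classes. Here $\theta$ is determined by the triple $(a,b,d')$ subject to $a+b+2d'=0$, so once $a$ and $b$ are known $d'$ is pinned down up to the ambiguity $d'\mapsto d'+N/2$; this ambiguity, as well as the reflection $a\mapsto -a$, is realised by a mapping class, by the computation of the mapping class group of the once-punctured M\"obius band in Section~\ref{ss:MCG}. Post-composing with $\mathrm{Aut}(\Z_N)$ lets me normalise $b$ to a fixed generator of the order-$(N/k)$ subgroup; the residual units are those congruent to $1$ modulo $N/k$, and their action on $a$ reduces the data to the residue of $a$ modulo $t$, a unit in $\Z_t^\times$, so that $\mathrm{Aut}(\Z_N)$ alone already yields $\varphi(t)$ classes. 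The orientation-reversing mapping class then identifies this residue with its negative, and the involution $\alpha\mapsto-\alpha$ on $\Z_t^\times$ has no fixed points for $t\geq 3$ while for $t\leq 2$ it is trivial, giving $\lceil\varphi(t)/2\rceil$ classes. I expect this last step to be the main obstacle: it rests on pinning down exactly which automorphisms of $\La$ are geometrically realisable --- in particular that the glide-reflection ambiguity $d'\mapsto d'+N/2$ contributes no further invariant and that the reflection $a\mapsto -a$ is available --- which is precisely what the mapping class group computation of Section~\ref{ss:MCG} is needed for, together with the bookkeeping that produces the ceiling.
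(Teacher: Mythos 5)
Your proposal follows essentially the same route as the paper: translate the action into a BSK-epimorphism $\theta\colon\Lambda\to\Z_N$ for $\Lambda$ with signature $(1;-;[m];\{(\,)\})$, extract $k\mid N$ and $N=2\,\mathrm{lcm}(m,N/k)$ from $\theta(c)=0$, $\mathrm{ord}(\theta(e_1))=N/k$ and orientability (the images of $x_1,e_1$ lie in the even index-two subgroup, so $\theta(d)$ must be odd), compute the genus by the Hurwitz--Riemann formula, and count equivalence classes modulo $\mathrm{Aut}(\Lambda)\times\mathrm{Aut}(\Z_N)$ using the Klein four-group $\mathrm{Out}(\Lambda)$ of Lemma \ref{mb1-lem}, arriving at $\Z_t^\ast/\{\pm1\}$ and hence $\lceil\varphi(t)/2\rceil$ classes.

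There is, however, a genuine gap in your counting step. You assert that the ambiguity $d'\mapsto d'+N/2$ (with $a=\theta(x_1)$, $b=\theta(e_1)$ fixed and $a+b+2d'=0$) ``is realised by a mapping class''. It is not: by Lemma \ref{mb1-lem}, the nontrivial elements of $\mathrm{Out}(\Lambda)$ act on the triple $(a,b,d')$ by $(-a,-b,-d')$, $(a,-b,-d'-a)$ and $(-a,b,d'+a)$, and none of these fixes $(a,b)$ while shifting $d'$ by $N/2$ (except in the degenerate case $m=2$, where $a=N/2$). In the paper this identification is achieved on the $\mathrm{Aut}(\Z_N)$ side instead, by multiplying by the unit $1+N/2$: this is a unit of $\Z_N$ precisely when $N/2$ is even, it fixes $a$ and $b$ because they are even, and it shifts $d'$ by $N/2$ because $d'$ is odd; when $N/2$ is odd no identification is needed, since exactly one of the two solutions of $2d'\equiv-(a+b)$ is odd. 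The same bookkeeping reappears in the converse direction of the equivalence criterion $a\equiv\pm a'\pmod{t}$, where the paper must sometimes replace the multiplier $c$ by $c+N/2$ to match the $d$-values. Without this argument your count could a priori be off by a factor of $2$, so the step as you justify it would fail. Two smaller points: the parity analysis giving the existence condition ``$t$ odd or $N/2t$ even'' is only asserted, not carried out (it amounts to deciding when $d'=-(n/t+am/t)+\varepsilon N/2$, $n=N/k$, $\varepsilon\in\Z_2$, can be odd); and the lemma you cite for it, Lemma \ref{big o-r}, is the list of admissible signatures and plays no role there --- the relevant tool is Lemma \ref{sign k}.
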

\begin{thm}\label{mb1nonori}
There is an action of a cyclic group of order $N$ on a bordered
non-orientable surface
with $k$ boundary components, with a M\"obius
band having $1$ cone point of order $m$ as the quotient orbifold, if
and only if $k$ divides $N$, $N=\mathrm{lcm}(m,N/k)$, and for
$t=(m,N/k)$, $N/t$ is odd. Furthermore
in  such case the  algebraic genus of the surface is equal to  $1+(m-1)N/m$
and
the number of topological
conjugacy classes of such actions is $\varphi(t)$ or $\left\lceil
\varphi(t)/2\right\rceil$ if $N$ is even or odd respectively.
\end{thm}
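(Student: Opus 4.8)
The plan is to translate the statement into the language of NEC groups and surface-kernel epimorphisms, following the framework recalled in Sections~\ref{sec:preli} and~\ref{ss:MCG}. Let $\La$ be an NEC group with signature $(1;-;[m];\{(\,)\})$, so that $\mathbb H/\La$ is the Möbius band carrying one cone point of order $m$. Then $\La$ is generated by an elliptic element $x$ of order $m$ (the cone point), a reflection $c$ (the boundary), a glide reflection $d$ (the crosscap) and a connecting generator $e$, subject to $x^m=c^2=1$, $[c,e]=1$ and the long relation $xed^2=1$. An action of $\zz_N$ on a surface $S$ with this quotient orbifold is the same datum as a surface-kernel epimorphism $\theta\colon\La\to\zz_N$, and two actions are topologically conjugate precisely when the corresponding epimorphisms lie in the same orbit under $\mathrm{Aut}(\zz_N)\times A$, where $A\leq\mathrm{Aut}(\La)$ is the subgroup induced by the mapping class group of the once-punctured Möbius band computed in Section~\ref{ss:MCG}. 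Writing $\zz_N$ additively, I set $a=\theta(x)$, $\ga=\theta(c)$, $\delta=\theta(d)$; the long relation forces $\theta(e)=-a-2\delta$, while $ma=0$ and $2\ga=0$.

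First I would pin down the existence conditions. Being surface-kernel at the cone point means $a$ has order exactly $m$, hence $m\mid N$ and $a=(N/m)s$ with $\gcd(s,m)=1$. In order that $S$ be bordered, $\ker\theta$ must contain reflections; since every reflection of $\La$ is conjugate to $c$ and $\zz_N$ is abelian, this happens exactly when $\ga=\theta(c)=0$. The number of boundary components of $S$ lying over the single period cycle is then $k=[\zz_N:\langle\theta(c),\theta(e)\rangle]=N/\mathrm{ord}(\theta(e))$, so that $k\mid N$ and $N/k=\mathrm{ord}(-a-2\delta)$. Imposing surjectivity of $\theta$ together with \emph{non}-orientability of $S$ — equivalently, that the signature of $\ker\theta$ carry the sign $-$, read off from the standard area and reflection bookkeeping — converts the compatibility between $\mathrm{ord}(a)=m$ and $N/k$ into the arithmetic conditions $N=\mathrm{lcm}(m,N/k)$ and $N/t$ odd, where $t=(m,N/k)$; in particular $k=m/t$. (In the orientable alternative the same bookkeeping produces the extra factor $2$, i.e. $N=2\,\mathrm{lcm}(m,N/k)$, as in Theorem~\ref{mb1ori}.) This yields the ``only if'' direction, and conversely for any $N,m,k$ meeting these conditions one writes down an explicit admissible $\theta$.

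The algebraic genus is immediate from Riemann--Hurwitz. As $\mathbb H/\La$ is a Möbius band ($\chi=0$) with one cone point of order $m$, its orbifold Euler characteristic is $-(1-1/m)$, and multiplicativity under the degree-$N$ orbifold covering $S\to\mathbb H/\La$ gives $\chi(S)=-N(m-1)/m$. Since a bordered surface satisfies $p=1-\chi(S)$, this produces $p=1+(m-1)N/m$, as claimed.

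The main work, and the main obstacle, is the count of topological conjugacy classes, i.e. of orbits of admissible $\theta$ under $\mathrm{Aut}(\zz_N)\times A$. The relevant invariant is the rotation number $s\in(\zz/m)^*$ defined by $a=(N/m)s$, reduced modulo $t=(m,N/k)$. The point is to prove that, after using the mapping-class moves of Section~\ref{ss:MCG} to put $\delta=\theta(d)$ into a normal form, the only automorphisms of $\zz_N$ compatible with that normal form are those with $\la\equiv 1\pmod t$; these fix $s\bmod t$, so that $s\bmod t\in(\zz/t)^*$ is a complete invariant and there are $\varphi(t)$ classes. The delicate feature is the dependence on the parity of $N$: when $N$ is odd the once-punctured Möbius band admits an extra homeomorphism reversing the crosscap (replacing $d$ by a conjugate of $d^{-1}$) whose induced automorphism, combined with inversion in $\zz_N$, is compatible with the normal form and identifies the class of $s$ with that of $-s$; this collapses the $\varphi(t)$ values into $\lceil\varphi(t)/2\rceil$ orbits, the ceiling accounting for the possibly self-paired class $s\equiv -s$. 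When $N$ is even this identification cannot be carried out compatibly and all $\varphi(t)$ classes survive. Establishing exactly which automorphisms of $\La$ are geometrically realised, and that the crosscap-reversal is compatible with the normalisation precisely when $N$ is odd, is the heart of the argument and the step where the explicit mapping-class-group computation of Section~\ref{ss:MCG} is indispensable.
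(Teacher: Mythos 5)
Your translation into BSK-epimorphisms $\theta\colon\Lambda\to\Z_N$ for the signature $(1;-;[m];\{(\,)\})$, your existence analysis (including $\theta(c)=0$, $\mathrm{ord}\,\theta(e)=N/k$, non-orientability forcing $\langle\theta(x),\theta(e)\rangle=\Z_N$, and the parity obstruction coming from the relation $xed^2=1$) and your genus computation all agree with the paper. The gap is in the class count, precisely at the step you yourself call the heart of the argument, and the mechanism you propose there is wrong on two counts. First, the mapping class you invoke is the wrong one: the crosscap reversal $\gamma$ of Lemma \ref{mb1-lem} (which sends $d$ to $x^{-1}d^{-1}x$) induces $-\mathrm{id}$ on $\Z_N$, so composed with inversion in $\Z_N$ it is the identity and identifies nothing; the class that actually realises $s\sim -s$ is the boundary slide $\delta$ (Korkmaz's $\nu_1$), which fixes $\theta(x)$, negates $\theta(e)$, and sends $\theta(d)$ to $-\theta(d)-\theta(x)$. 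Second, and fatally, that identification is available for \emph{every} $N$: $\delta$ is a homeomorphism of the orbifold regardless of parity, and one checks (this is claim (2) in the paper's proof) that for $N$ even each $\theta_a$ is equivalent to a BSK-map whose parameter is $-a\bmod t$. So the statement your plan requires, that for $N$ even ``the identification cannot be carried out compatibly,'' is false, and $s\bmod t$ is \emph{not} a complete invariant: every class contains representatives with parameter $s$ and with $-s$.

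The true source of the parity dependence is one your scheme never sees: the long relation determines $\theta(d)$ from $(\theta(x),\theta(e))$ uniquely when $N$ is odd, but only modulo $N/2$ when $N$ is even, and for $N$ even (with $t>2$) the two lifts $b$ and $b+N/2$ give \emph{inequivalent} epimorphisms. Hence for $N$ even the set of classes is a two-to-one cover of $\Z_t^\ast/\{\pm 1\}$, giving $2\cdot\varphi(t)/2=\varphi(t)$ classes -- numerically equal to $|\Z_t^\ast|$, which is why your answer agrees, but not parametrised by $s\bmod t$. For $N$ odd, $\theta(d)$ is determined and the count is $|\Z_t^\ast/\{\pm1\}|=\lceil\varphi(t)/2\rceil$. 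Finally, the boundary case $t=2$ (which does occur for $N$ even, since $N/t$ odd forces $t$ even) needs a separate argument showing that the two lifts of $\theta(d)$ \emph{are} then equivalent; the paper carries this out explicitly, while your framework never encounters the issue. So although all your final numbers match the theorem, a proof following your plan would break exactly where you locate its main difficulty.
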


\subsection{Actions with a $2$-punctured disc as the quotient orbifold.}
\begin{thm}\label{d21}  There exists an action of a cyclic group
of order $N$ on a bordered surface $S$  with $k$ boundary components,
having a disc with two cone points of orders $m$ and $n$ as the
quotient orbifold if and only if $S$ is orientable and
\begin{itemize}
\item $N=\mathrm{lcm}(m,n)$,
\item $k$ divides ${t}/{(t,N/t)}$, where $t=(m,n)$,
\item if $N$ is even and $N/t$ is odd, then $k$ is even,
\end{itemize}
In such case the  algebraic genus of  $S$ is equal to   $1+N(1-1/m-1/n)$ and if  $C$ denotes the biggest
divisor of $t/k$ coprime to $Nk/t$, then the number of equivalence
classes of such actions is

\hspace{2mm}
\begin{tabular}{lll}
$\bullet$& $\varphi\big(t/kC\big)\psi(C)$ & \;{if}\; $n\ne m$,\\
$\bullet$& $\varphi\big(n/kC\big)\psi(C)/2 +1$ & \;{if}\; $n=m$ and $n/kC=2^z$, where $z>1$\\
$\bullet$& $\left\lceil\varphi(n/kC)\psi(C)/2\right\rceil$ &
\;{otherwise}.
\end{tabular}
\end{thm}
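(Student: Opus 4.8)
The plan is to reduce the whole statement to a question about epimorphisms onto $\Z_N$ from a fixed NEC group, and then to count their orbits under a natural symmetry group. An action with the prescribed quotient corresponds to a bordered surface group $\Gamma\trianglelefteq\La$ with $\La/\Gamma\cong\Z_N$, where $\La$ is the NEC group of signature $(0;+;[m,n];\{(\,)\})$, generated by elliptics $x_1,x_2$ of orders $m,n$, a reflection $c$, and a connecting element $e$ with $x_1x_2e=1$ and $e^{-1}ce=c$. Such a $\Gamma$ is $\ker\theta$ for an epimorphism $\theta\colon\La\to\Z_N$, and it is a bordered surface group exactly when $\theta$ preserves the orders of $x_1,x_2$ (no cone points on $S$) and $\theta(c)=0$ (so that $c\in\Gamma$ produces the boundary; if $\theta(c)\ne0$ then $\Gamma$ contains no reflection and $S$ is unbordered). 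Writing $a=\theta(x_1)$, $b=\theta(x_2)$, an admissible $\theta$ is a pair $(a,b)$ with $\mathrm{ord}(a)=m$, $\mathrm{ord}(b)=n$, $\langle a,b\rangle=\Z_N$; as $\langle a,b\rangle$ has order $\mathrm{lcm}(m,n)$, surjectivity is equivalent to $N=\mathrm{lcm}(m,n)$, giving condition (1).

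Next I would pin down the topological type of $S$. By formula (\ref{can-fuchs}) the canonical Fuchsian subgroup $\La^+$ is a sphere group with four cone points of orders $m,n,m,n$, and since $\theta(c)=0$ the restriction $\theta|_{\La^+}$ still preserves all four orders; hence $X:=\cH/(\Gamma\cap\La^+)$ is a closed orientable surface, $c$ induces on it an orientation-reversing involution with nonempty fixed-point set, and $S=X/\langle c\rangle$. This forces $S$ to be orientable, which is the content of Lemma~\ref{big o-r} here, and an Euler-characteristic count through $\La^+$ gives $\chi(S)=N(1/m+1/n-1)$, hence algebraic genus $1+N(1-1/m-1/n)$. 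The number of boundary components is $k=[\Z_N:\langle\theta(e)\rangle]=\gcd(N,a+b)$. Putting $m=tm_1$, $n=tn_1$ with $t=(m,n)$ and $(m_1,n_1)=1$, one checks $\gcd(a+b,m_1n_1)=1$, so $k=\gcd(t,a+b)$; realising a given $k$ amounts to solving $n_1\alpha+m_1\beta\equiv0\ (k)$ with $\alpha,\beta$ units, whose solvability is exactly $k\mid t/(t,N/t)$, while the parity obstruction (if $N$ is even and $N/t$ odd then $m_1,n_1$ are odd and $n_1\alpha+m_1\beta$ is forced even) gives that $k$ must be even. These are conditions (2) and (3).

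It then remains to count orbits of admissible pairs $(a,b)$ with $\gcd(t,a+b)=k$ under the group generated by $\mathrm{Aut}(\Z_N)$, acting by $(a,b)\mapsto(ua,ub)$, and the geometric automorphisms of $\La$. Here I would invoke the mapping class group of the twice-punctured disc from Section~\ref{ss:MCG}: its orientation-preserving classes act through boundary and point-pushing Dehn twists, which are inner and so trivial on the abelian quotient, together with the half-twist swapping the cone points precisely when $m=n$; the orientation-reversing reflection acts as $(a,b)\mapsto(-a,-b)$, already in $\mathrm{Aut}(\Z_N)$. Thus for $m\ne n$ the effective group is $D=\{(\alpha,\beta)\in(\Z/m)^*\times(\Z/n)^*:\alpha\equiv\beta\ (t)\}$, the image of $\mathrm{Aut}(\Z_N)$. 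Since $(\Z/m)^*$ acts simply transitively on the order-$m$ elements I would normalise $a$ and count admissible $b$ modulo $\{\beta\equiv1\ (t)\}$; resolving $\gcd(t,a+b)=k$ prime by prime produces a factor $p-2$ exactly for primes $p\mid t/k$ coprime to $Nk/t$ (where the forbidden residue is a unit) and $p-1$ otherwise, yielding $\varphi(t/kC)\psi(C)$. For $m=n$ the half-twist gives an extra swapping involution on these orbits, so the count is halved and corrected by the swap-fixed orbits; analysing when a self-paired class occurs, with the exceptional $2$-power case $n/kC=2^z$ contributing one extra, yields the $\lceil\cdot/2\rceil$, the $/2+1$, and the three listed subcases.

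The hard part will be this final orbit count, and in particular the clean emergence of $\psi$ and of the divisor $C$: tracking, prime by prime, whether the linear condition cutting out the prescribed $k$ forbids a unit (costing $p-2$) or a non-unit (costing $p-1$) residue is delicate, and the $m=n$ case compounds it with the enumeration of swap-fixed orbits responsible for the ceiling and for the isolated $+1$. I would organise this using the counting lemmas already developed for $\psi$ in \cite{BCGH}.
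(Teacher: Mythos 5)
Your setup coincides with the paper's: actions correspond to BSK-epimorphisms $\theta\colon\Lambda\to\Z_N$ with $\theta(c)=0$ and the orders of $x_1,x_2$ preserved, $N=\mathrm{lcm}(m,n)$ comes from surjectivity, $k$ is read off from the order of $\theta(e)$, and since $\mathrm{Out}(\Lambda)$ acts on the abelian images only through $-1$ (plus the swap of the two cone points when $m=n$, Lemma \ref{d2-lem}), the classification reduces to counting admissible pairs modulo $\mathrm{Aut}(\Z_N)$. The genuine gap is that the quantitative assertions, which are the actual content of the theorem, are announced rather than proved. You assert that resolving $\gcd(t,a+b)=k$ ``prime by prime'' costs a factor $p-2$ or $p-1$ according to whether the forbidden residue is a unit, and that the $m=n$ count is ``halved and corrected by the swap-fixed orbits,'' but neither claim is derived. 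The paper's proof supplies exactly this missing content: it normalizes every BSK-map to $\theta_a$ with $a\in\mathcal{L}=\{a\in\Z_n^\ast : A_1+aA_2 \textrm{ has order } l\}$, shows that $S=\{c\in\Z_N^\ast : c\equiv 1\ \mathrm{mod}\ m\}$ acts \emph{freely} on $\mathcal{L}$ (so the number of classes is exactly $|\mathcal{L}|/|S|$), imports $|\mathcal{L}|=\varphi(A_1B)\psi(C)$ from \cite[Theorem 3.4]{BCGH}, computes $|S|=\varphi(N)/\varphi(m)$, and, for $m=n$, determines the number $I$ of self-inverse elements of $\mathcal{L}$ by splitting $\Z_N\cong\Z_{kB}\oplus\Z_C$, obtaining $I=2$ iff $B=2^z$ with $z>1$, $I=1$ iff $B\le 2$, $I=0$ otherwise, together with $I\equiv\varphi(B)\psi(C)\ \mathrm{mod}\ 2$; these facts are precisely where the three displayed formulas, the ceiling, and the exceptional $+1$ come from. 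Without them, your outline does not establish the count.

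There is also a concrete step that fails in your existence argument: solvability of $n_1\alpha+m_1\beta\equiv 0 \pmod{k}$ in units is \emph{not} equivalent to $k\mid t/(t,N/t)$. Take $m=8$, $n=4$: then $N=8$, $t=4$, $N/t=2$, $t/(t,N/t)=2$, so $k=2$ passes this test (and the parity condition is vacuous because $N/t$ is even), yet $a$ must be odd and $b$ even, so $a+b$ is always odd and only $k=1$ is realizable. The paper's route through the Maclachlan decomposition and Harvey's Lemma (Lemma \ref{lem-Harv}) is what avoids this: it derives the \emph{stronger} necessary conditions $k\mid t$, $(k,t/k)=1$ and $(k,N/t)=1$, of which the divisibility $k\mid t/(t,N/t)$ stated in the theorem is merely a consequence, and it is these stronger conditions that feed into Harvey's Lemma for the converse construction. (The looseness originates in the wording of the theorem's second condition, but your congruence-based biconditional commits to it literally and therefore proves a false equivalence, whereas the paper's mechanism never uses the weak form.) This part of your argument needs to be replaced by, or reduced to, the Maclachlan/Harvey analysis.
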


\subsection{Actions with a 1-punctured annulus as the quotient orbifold.}
First we deal with the actions on non-orientable surfaces.
\begin{thm}\label{ann1-nonori}
There exists an action of a cyclic group $\zz_N$ on a
non-orientable surface $S$  with $k$ boundary components, with an
annulus having one cone point of order $m$ as the quotient orbifold,
if and only if $k$ divides $N$and $N=\mathrm{lcm}(m,N/k)$. Furthermore,
in  such case the  algebraic genus of the surface is equal to $1+N(m-1)/m$ and
there are $\varphi(t)$ different topological types of such action,
where $t=(m,N/k)$.
\end{thm}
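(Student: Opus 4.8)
The plan is to translate existence and classification into the language of non-euclidean crystallographic groups reviewed in Section \ref{sec:preli}, and then to count the relevant epimorphisms up to the equivalence induced by the mapping class group of the quotient orbifold computed in Section \ref{ss:MCG}. An action of $\zz_N$ with the prescribed quotient orbifold corresponds to an NEC group $\La$ with signature $(0;+;[m];\{(-),(-)\})$ together with a surface-kernel epimorphism $\theta\colon\La\to\zz_N$, where $S=\cH/\ker\theta$ and the two empty period cycles account for the two boundary circles of the annulus. Using the presentation
\[
\La=\langle x,e_1,e_2,c_1,c_2 \mid x^m=c_1^2=c_2^2=1,\; e_i^{-1}c_ie_i=c_i,\; xe_1e_2=1\rangle,
\]
writing $\zz_N$ additively and putting $a=\theta(x)$, $b_i=\theta(e_i)$, the defining relation gives $a+b_1+b_2=0$, while $\theta(c_i)\in\{0,N/2\}$ since each $c_i$ is an involution. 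I would first record the smoothness conditions making $\ker\theta$ a bordered surface group: the cone point of order $m$ unwinds iff $\mathrm{ord}(a)=m$, which forces $m\mid N$, and there are no further link-period conditions because both period cycles are empty.

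Second, I would analyse orientability and the boundary. As $\zz_N$ is abelian, every reflection of $\La$ maps to some $\theta(c_i)$, so $\ker\theta$ contains a reflection (equivalently $S$ has boundary) iff some $\theta(c_i)=0$; in that case $S$ is automatically non-orientable, since the orientation character $w$ of $\La$ (with $w(x)=w(e_i)=0$, $w(c_i)=1$) then cannot factor through $\theta$. This is the conceptual reason the quotient orbifold forces non-orientability in the bordered case, and it is what makes the hypotheses of the theorem the natural ones. Counting boundary components as $\sum_{i:\,\theta(c_i)=0} N/\mathrm{ord}(b_i)$ and feeding in $a+b_1+b_2=0$ with $\mathrm{ord}(a)=m$, I expect to recover exactly the numerical constraints $k\mid N$ and $N=\mathrm{lcm}(m,N/k)$ (equivalently $m=k\,(m,N/k)$). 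The algebraic genus is then immediate from Riemann--Hurwitz: with $\ml/2\pi=1-1/m$ for this signature and $\mu(\ker\theta)=N\ml$, one gets $-\chi(S)=N(1-1/m)$, hence $p=1-\chi(S)=1+(m-1)N/m$.

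The classification up to topological conjugacy then becomes the count of $\theta$ modulo the natural action of $\mathrm{Aut}(\zz_N)$ by post-composition and of the subgroup of $\mathrm{Aut}(\La)$ induced by the mapping class group of the once-punctured annulus from Section \ref{ss:MCG}. I would describe this group by its generators (the twist along the core curve, the boundary-parallel twists, and the homeomorphism exchanging the two boundary circles) and compute their effect on the tuple $(a,b_1,b_2,\theta(c_1),\theta(c_2))$. The twists alter the $b_i$ while fixing $a$, the exchanging map permutes the two period cycles, and $\mathrm{Aut}(\zz_N)$ scales $a$ within the order-$m$ subgroup; tracking these, the complete invariant of a class should reduce to the residue of $a$ modulo the subgroup of order $N/t$ up to units, which I expect to leave exactly $\varphi(t)$ orbits, where $t=(m,N/k)$.

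The main obstacle is precisely this last orbit count. The delicate points are the case distinction according to how many reflections map to $0$ (one boundary circle of the orbifold genuinely bounds $S$ while the other may fold to an interior fixed circle when its reflection maps to $N/2$), and showing that after applying the full mapping class group these a priori different configurations either coincide or are counted without overlap, so that the total is \emph{exactly} $\varphi(t)$ rather than merely congruent to it. Verifying that the exchanging homeomorphism together with the twists generate enough of $\mathrm{Aut}(\La)$ to collapse the connecting data $b_1,b_2$ completely, leaving $a$ as the only genuine invariant, is where the explicit mapping-class-group computation of Section \ref{ss:MCG} will be essential.
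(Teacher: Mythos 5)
Your overall strategy (BSK-epimorphisms from the NEC-group with signature $(0;+;[m];\{(\,),(\,)\})$, counted modulo $\mathrm{Aut}(\zz_N)$ and the finite outer automorphism group coming from the mapping class group of the once-punctured annulus) is the same as the paper's, but your orientability analysis contains a genuine error that undermines both directions of the theorem. You claim that once some $\theta(c_i)=0$ the surface $S=\mathcal{H}/\ker\theta$ is \emph{automatically} non-orientable, because the orientation character $w$ of $\Lambda$ cannot factor through $\theta$. But the failure of $w$ to factor through $\theta$ only says that $\ker\theta\not\subseteq\Lambda^+$, i.e.\ that $\ker\theta$ contains orientation-reversing elements --- which is true for \emph{every} bordered surface group (it contains the reflection $c_i$ itself) and says nothing about the orientability of $S$. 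The correct criterion is Lemma \ref{sign k}(a): $S$ is non-orientable iff $\ker\theta$ contains a non-orientable word. For this signature that means: after relabelling, $\theta(c_1)=0$, $\theta(c_2)=N/2$, \emph{and} $N/2$ lies in the subgroup $H=\langle\theta(x),\theta(e_1)\rangle$. If instead both $\theta(c_i)=0$, or if $\theta(c_2)=N/2$ but $H$ has index $2$ in $\zz_N$, then $S$ is orientable --- these are exactly cases (2) and (1) of the companion Theorem \ref{ann1}, which exhibits orientable actions with the very same quotient orbifold, contradicting your claim that the orbifold ``forces non-orientability in the bordered case''. The error is not peripheral: the condition $N=\mathrm{lcm}(m,N/k)$ does \emph{not} follow from boundary counting (which only gives $k\mid N$ and that $\theta(e_1)$ has order $N/k$) together with surjectivity, since surjectivity also allows $N=2\,\mathrm{lcm}(m,N/k)$ with $N/2$ odd. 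It is precisely the non-orientable-word condition that forces $H=\zz_N$, i.e.\ $N=\mathrm{lcm}(m,N/k)$, and this is what separates the present theorem from Theorem \ref{ann1}.

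The error also compromises the count: with your criterion you would be classifying a set of epimorphisms that includes orientable configurations, and even on the correct set you leave open the decisive point, namely why the answer is $\varphi(t)$ rather than $\left\lceil\varphi(t)/2\right\rceil$ as in the neighbouring cases. The paper settles this as follows: every relevant BSK-map is equivalent to some $\theta_a$ with $\theta_a(x)=N/m$, $\theta_a(e_1)=ak$, $\theta_a(c_1)=0$, $\theta_a(c_2)=N/2$, $a\in\zz_{N/k}^\ast$; the swap automorphism $\beta$ of Lemma \ref{ann1-lem} cannot occur in an equivalence because it exchanges $c_1$ and $c_2$, which have distinct $\theta$-images; and the sign automorphism $\alpha$, after replacing the unit $c$ by $-c$, yields the \emph{same} congruences $c\equiv 1 \pmod m$ and $a'\equiv ca \pmod{N/k}$ as the identity does, so the sign is absorbed rather than identifying $a$ with $-a$. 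By Lemma \ref{CRT} one then gets $\theta_a\sim\theta_{a'}$ iff $a\equiv a' \pmod t$, i.e.\ exactly $\varphi(t)$ classes. You correctly flagged this orbit count as the main obstacle, but did not resolve it, and your orientability criterion would prevent the resolution from coming out right.
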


The case of orientable $S$ considered in the next theorem 
is much more involved.
It has two parts. The first describes the necessary and sufficient conditions   for  existence of the actions, whereas  the 
the second has quantitative character and  provides  the numbers of equivalence classes of such actions. These numbers are expressed in terms of BSK-maps 
and therefore a reader less familiar with the study of periodic  actions on compact surfaces from a combinatorial point of view should  postpone the reading  of  (ii)-(iv)
until Section 3,  where these maps are introduced.

\begin{thm}\label{ann1}
$(i)$ There exists an action of a cyclic group $\zz_N$ on an orientable
surface $S$ with $k$ boundary components, with an annulus having one
cone point of order $m$ as the quotient orbifold if and only if
either

\noindent
$(1)$ $k$ divides $N$, $N=2\mathrm{lcm}(m,N/k)$ and $N/2$ is odd,
or\\ $(2)$ $m$ divides $ N$ and there exits an integer $n$, $1\le
n<k$, such that:
\begin{itemize}
\item[(a)] $n$ and $k-n$ divide $m$;
\item[(b)] $N/m$, $n$ and $k-n$ are pairwise relatively prime;
\item[(c)] if $N$ is even then one of $N/m$, $n$, $k-n$ is even.
\end{itemize}
In   such case the  algebraic genus of the surface is equal to  $1+N(m-1)/m$.

\noindent
$(ii)$ Suppose that $N, m, k$ satisfy $(1)$ and $t=(m,N/k)$. Then there are
$\varphi(t)$ equivalence classes of BSK-maps $\theta^1\colon\Lambda\to\Z_N$ such that
$\theta^1(c_1)\ne\theta^1(c_2)$.

\smallskip

\noindent
$(iii)$ Suppose that $N, m, k$ satisfy $(2)$. Then the number of equivalence classes of BSK-maps $\theta^2\colon\Lambda\to\Z_N$ such that
$\theta^2(c_1)=\theta^2(c_2)=0$, and $\theta^2(e_1)$ and $\theta^2(e_2)$ have orders $N/n$ and $N/(k-n)$ is

\hspace{2mm}\begin{tabular}{lll}
$\bullet$ & $\varphi\big({m}/{Cn(k-n)}\big)\psi(C)$ & \;{if}\; $k\ne 2$,\\
$\bullet$ & $\varphi\big({m}/{C}\big)\psi(C)/2+1$ & \;{if}\; $k=2$ and $m/C=2^z$, where $z>1$,\\
$\bullet$ &
$\left\lceil\varphi\big({m}/{C}\big)\psi(C)/2\right\rceil$ &
\;{otherwise},
\end{tabular}

\noindent
where $C$ is the biggest divisor of
${m}/{n(k-n)}$ coprime to ${N}n(k-n)/{m}$.

\smallskip
\noindent
$(iv)$ Every BSK-map corresponding to a $\zz_N$-action on $S$ is equivalent either to some $\theta^1$ from the assertion (ii), or to some $\theta^2$ from the assertion (iii).
\end{thm}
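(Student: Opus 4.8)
The plan is to translate the statement into a counting problem for epimorphisms of the NEC group of the quotient orbifold and then carry that count out prime by prime. The orbifold is an annulus with one cone point, so its NEC group has signature $(0;+;[m];\{(),()\})$, and I would write $\La=\langle x_1,e_1,e_2,c_1,c_2\mid x_1^m,c_1^2,c_2^2,[e_1,c_1],[e_2,c_2],x_1e_1e_2\rangle$. A homomorphism $\theta\colon\La\to\Z_N$ is then determined by $a:=\theta(x_1)$, by $\theta(e_1)$, and by $\theta(c_1),\theta(c_2)\in\{0,N/2\}$, with $\theta(e_2)=-a-\theta(e_1)$ forced by the long relation. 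Smoothness at the cone point means $\mathrm{ord}(a)=m$, and by the boundary‑counting rule recalled in Section~\ref{sec:preli} a cycle with $\theta(c_i)=0$ contributes $N/\mathrm{ord}(\theta(e_i))$ boundary components of $S$ while a cycle with $\theta(c_i)=N/2$ contributes none. Applying the sign criterion for NEC kernels (also recalled there), and using the orbifold automorphism that interchanges the two cycles to normalise the labelling, an orientable $S$ occurs only for the patterns $(\theta(c_1),\theta(c_2))=(0,0)$ or $(N/2,0)$; the pattern $(N/2,N/2)$ has non‑orientable kernel and is accounted for in Theorem~\ref{ann1-nonori}. These are precisely the BSK‑maps $\theta^2$ and $\theta^1$ of the statement, which already yields the dichotomy of assertion $(iv)$ once each family is shown non‑empty under the stated conditions.

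For the existence part $(i)$ I would treat the two patterns separately. In the pattern $(N/2,0)$ the single boundary cycle gives $k=N/\mathrm{ord}(\theta(e_2))$, so $\mathrm{ord}(\theta(e_2))=N/k$ and $\langle a,\theta(e_2)\rangle$ is the subgroup of order $\mathrm{lcm}(m,N/k)$; together with the requirement that this subgroup and $N/2$ generate $\Z_N$, the sign criterion forces $\mathrm{lcm}(m,N/k)=N/2$ with $N/2$ odd, which is exactly condition $(1)$. In the pattern $(0,0)$ the two cycles give $n:=N/\mathrm{ord}(\theta(e_1))$ and $k-n:=N/\mathrm{ord}(\theta(e_2))$ boundary components, so I am asking for $a,b\in\Z_N$ (with $b=\theta(e_1)$) of orders $m$ and $N/n$, generating $\Z_N$, with $\mathrm{ord}(a+b)=N/(k-n)$. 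A prime‑by‑prime inspection shows such a triple exists iff $n,k-n\mid m$, the numbers $N/m,n,k-n$ are pairwise coprime, and — since at a prime $2\mid N$ one cannot have $a,b,a+b$ all of maximal $2$‑power order, as odd$+$odd is even — at least one of $N/m,n,k-n$ is even; this is exactly condition $(2)(a)$–$(c)$. In both patterns the Riemann–Hurwitz area formula $\mu(\Gamma)=N\,\mu(\La)=2\pi N(m-1)/m$ gives $\chi(S)=-N(m-1)/m$ and hence the asserted genus $p=1+N(m-1)/m$.

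The heart of the proof is the enumeration up to equivalence, i.e.\ up to the joint action of $\mathrm{Aut}(\Z_N)=(\Z/N)^{*}$ (post‑composition) and the group of automorphisms of $\La$ induced by orbifold homeomorphisms (Section~\ref{ss:MCG}). Since $\mathrm{Aut}(\Z_N)\cong\prod_{p}(\Z/p^{e})^{*}$ (with $p^{e}$ the exact power of $p$ in $N$) acts independently on the primary components, the number of $\mathrm{Aut}(\Z_N)$‑orbits factors as a product of local counts. For $(ii)$: under condition $(1)$ both $a$ and $\theta(e_2)$ have odd order dividing $N/2$, hence lie in the odd‑order subgroup $2\Z_N\cong\Z_{N/2}$ and generate it; having already used the swap to fix the labelling, the diagonal unit action on a pair of elements of orders $m$ and $N/k$ in a cyclic group is free, with $\varphi(t)$ orbits where $t=(m,N/k)$. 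For $(iii)$ I classify each prime $p\mid N$ by which of $N/m,n,k-n$ it divides; after normalising the first coordinate to $1$ the local orbit count is $\psi(p^{e})=p^{e-1}(p-2)$ at the primes dividing $C$ (there $a,b,a+b$ are all units, so $b\not\equiv0,-1\pmod p$), and the appropriate factor of $\varphi$ at the remaining primes. Multiplying these reproduces $\varphi\big(m/(Cn(k-n))\big)\psi(C)$, which is the value for $k\neq2$.

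The main obstacle is the case $k=2$, where $n=k-n=1$ and the two cycles are interchangeable: the swapping automorphism acts on the $\mathrm{Aut}(\Z_N)$‑orbits as the involution $b\mapsto-a-b$ (i.e.\ $e_1\leftrightarrow e_2$), and the equivalence classes are its orbits. A Burnside count gives $\tfrac12\big(\varphi(m/C)\psi(C)+|\mathrm{Fix}|\big)$, so the delicate point is the fixed locus: a class is fixed iff some unit $u$ satisfies $ua=a$ and $(u+1)b=-a$, and analysing when $\mathrm{ord}(a)=N/\gcd(u+1,N)$ can equal $m$ shows that the $2$‑adic valuation forces a genuine exception precisely when $m/C=2^{z}$ with $z>1$ (two fixed classes, hence the extra $+1$), and otherwise yields $\lceil\varphi(m/C)\psi(C)/2\rceil$. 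Verifying that the swap induces exactly $b\mapsto-a-b$ on $\theta$ — rather than a Dehn‑twist modification, which one checks acts trivially on the abelianised data — and reconciling this fixed‑point computation with the local normalisation, is the step requiring the most care; the rest is bookkeeping built on the local counts above.
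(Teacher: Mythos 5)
Your proposal is correct and follows essentially the same route as the paper: split the BSK-maps according to the pattern of reflection images, use the boundary-counting and orientability criteria (Lemma \ref{sign k}) to derive conditions $(1)$ and $(2)$ (your prime-by-prime inspection is precisely the content of Harvey's Lemma \ref{lem-Harv} and the Maclachlan decomposition, which the paper cites), and count equivalence classes via $\mathrm{Out}(\Lambda)$ (Lemma \ref{ann1-lem}) together with $\mathrm{Aut}(\Z_N)$, with the paper's $(|\mathcal{L}|+I)/2$ count for $k=2$ being exactly your Burnside/fixed-point analysis. One side remark in your first paragraph is wrong, although it does not affect the argument: the pattern $(\theta(c_1),\theta(c_2))=(N/2,N/2)$ does not have a ``non-orientable kernel accounted for in Theorem \ref{ann1-nonori}''; that pattern puts no reflection in the kernel at all, so the kernel uniformizes a closed surface and the pattern is excluded simply because the map is not a BSK-map, whereas the non-orientable bordered case of Theorem \ref{ann1-nonori} arises from the pattern $(N/2,0)$ when $\theta(x)$ and $\theta(e_1)$ generate all of $\Z_N$ rather than its index-$2$ subgroup.
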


\subsection{Actions with a 3-punctured disc as the quotient orbifold.} The orders of the three cone points are either $2,2,m$, $m\ge 2$, or $2,3,m$, where $m\in\{3,4,5\}$. We consider these two cases separately.
\begin{thm}\label{d31}
There is an action of a cyclic group of order $N$ on a bordered
surface $S$ with a disc having $3$ cone points of orders $2,2,m$ as
the quotient orbifold if and only if $N=\mathrm{lcm}(2,m)$.
In such case $S$ is orientable, it has $N/m$ boundary components,
{its genus} is equal to
 $g=1+(m-2)N/2m$ and
such an action is unique up to topological conjugation.
\end{thm}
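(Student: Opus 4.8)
The plan is to pass to the non-euclidean crystallographic (NEC) description. Since the quotient orbifold is a disc with three interior cone points of orders $2,2,m$ and no corner points, the associated NEC group $\Lambda$ has signature $(0;+;[2,2,m];\{()\})$, with canonical generators $x_1,x_2,x_3,e_1,c_1$ subject to $x_1^2=x_2^2=x_3^m=c_1^2=1$, $e_1c_1e_1^{-1}=c_1$ and $x_1x_2x_3e_1=1$. By the correspondence recalled in Sections \ref{sec:preli} and \ref{ss:MCG}, actions of $\Z_N$ with this quotient correspond to surface-kernel epimorphisms $\theta\colon\Lambda\to\Z_N$, and topological types correspond to equivalence classes of such $\theta$ under the natural action of $\mathrm{Aut}(\Z_N)$ together with the mapping classes of the orbifold. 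I would set $\theta(x_i)=a_i$, $\theta(e_1)=b$, $\theta(c_1)=d$ and read off the constraints.

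First I would determine the admissible $\theta$. Smoothness forces $a_1,a_2$ to have order $2$; since $\Z_N$ contains an element of order $2$ only when $N$ is even, and then a unique one, namely $N/2$, this forces $N$ even and $a_1=a_2=N/2$. Likewise $a_3$ must have order exactly $m$, so $m\mid N$ and $a_3$ generates the unique subgroup of order $m$. Because $S$ is bordered and its boundary must project onto the boundary of the disc, the reflection $c_1$ has to lie in $\ker\theta$, i.e. $d=0$; the long relation then gives $b=-a_3$. The only remaining requirement is surjectivity, i.e. $\langle N/2,a_3\rangle=\Z_N$; since $\langle N/2,a_3\rangle$ has order $\mathrm{lcm}(2,m)$, this holds precisely when $N=\mathrm{lcm}(2,m)$, and conversely every $a_3$ of order $m$ then yields a valid $\theta$. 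This establishes the existence statement.

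Next I would compute the signature of $\Gamma=\ker\theta$ from $\theta$ using the standard formulas for kernels of smooth epimorphisms (Section \ref{sec:preli}, \cite{BEGG}). With $d=0$ and $\theta(e_1)=-a_3$ of order $m$, the single empty period cycle of $\Lambda$ produces $N/m$ boundary components of $S$, and the same formulas give that $S$ has sign $+$, i.e. is orientable. The genus I would pin down from the orbifold Euler characteristic: $\chi(\cO)=1-2\cdot\tfrac12-(1-\tfrac1m)=\tfrac1m-1$, so $\chi(S)=N\chi(\cO)=-N(m-1)/m$; combined with $k=N/m$ and orientability, $2-2g-k=\chi(S)$ yields $g=1+(m-2)N/(2m)$, which one checks is an integer for $N=\mathrm{lcm}(2,m)$. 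I expect the orientability (sign) bookkeeping to be the most delicate point, since it is the one place where the period-cycle data, rather than the mere orders of the generators, must be tracked with care.

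Finally, for uniqueness I would note that the only freedom in $\theta$ is the choice of the order-$m$ element $a_3=\theta(x_3)$ (the two involutions coincide, so no swap of the two order-$2$ periods is visible), giving $\varphi(m)$ admissible maps. The group $\mathrm{Aut}(\Z_N)\cong\Z_N^\times$ acts by $a_3\mapsto ua_3$, and since the reduction $\Z_N^\times\to\Z_m^\times$ is surjective (as $m\mid N$), this action is transitive on the $\varphi(m)$ elements of order $m$. Hence all admissible $\theta$ are equivalent, so there is a single topological type; transitivity under $\mathrm{Aut}(\Z_N)$ alone already forces this, so the mapping class group of the orbifold need not be analyzed for this case. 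This gives uniqueness up to topological conjugation.
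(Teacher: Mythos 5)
Your proposal is correct and follows essentially the same route as the paper's proof: both derive the constraints $\theta(x_1)=\theta(x_2)=N/2$, $\theta(c)=0$, $\theta(e)=-\theta(x_3)$ with $\theta(x_3)$ of order $m$, get $N=\mathrm{lcm}(2,m)$ from surjectivity, read off orientability and the $N/m$ boundary components from Lemma \ref{sign k}, and compute the genus by the Hurwitz--Riemann formula (your orbifold Euler characteristic computation is the same thing). Your explicit transitivity argument for uniqueness, via surjectivity of $\Z_N^\ast\to\Z_m^\ast$ (Lemma \ref{DT}), is exactly what the paper compresses into ``by multiplying $\theta$ by an element of $\Z_N^\ast$, we may assume $\theta(x_3)=N/m$,'' and you are right that no analysis of $\mathrm{Out}(\Lambda)$ is needed here.
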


\begin{thm}\label{d32}
There is an action of a cyclic group of order $N$ on a bordered
surface $S$ with a disc having $3$ cone points of orders $2,3,m$,
where $m\in\{3,4,5\}$, as the quotient orbifold if and only if
$N=\mathrm{lcm}(2,3,m)$ and $S$ is an orientable surface of topological genus
{$g$} with $k$ boundary components, where
\begin{itemize}
\item if $m=3$ then $(g,k)=(3,1)$ or $(2,3)$,
\item if $m=4$ then $(g,k)=(6,1)$,
\item if $m=5$ then $(g,k)=(15,1)$.
\end{itemize}
In each case the action is unique up to topological conjugation.
\end{thm}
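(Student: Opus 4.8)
The plan is to recast the problem via non-euclidean crystallographic groups and surface-kernel epimorphisms, as for the preceding theorems. Let $\La$ be an NEC group with signature $(0;+;[2,3,m];\{()\})$, generated by elliptic elements $x_1,x_2,x_3$ of orders $2,3,m$, a connecting generator $e_1$ and a reflection $c_1$, subject to $x_1^2=x_2^3=x_3^m=c_1^2=1$, $e_1c_1e_1^{-1}c_1^{-1}=1$ and $x_1x_2x_3e_1=1$. An action of $\Z_N$ with the disc carrying cone points $2,3,m$ as quotient orbifold is the same as a BSK-map $\theta\colon\La\to\Z_N$, and $S=\mathbb H/\ker\theta$. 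Writing $a_i=\theta(x_i)$ additively, the admissible $\theta$ are governed by three constraints: $a_1,a_2,a_3$ have orders exactly $2,3,m$; the long relation forces $\theta(e_1)=-(a_1+a_2+a_3)$; and, since $S$ is bordered, the reflection must survive in the kernel, i.e.\ $\theta(c_1)=0$.

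For existence and the value of $N$, I would observe that with $\theta(c_1)=0$ surjectivity of $\theta$ reduces to $\langle a_1,a_2,a_3\rangle=\Z_N$; as the subgroup of a cyclic group generated by elements of orders $2,3,m$ is its unique subgroup of order $\mathrm{lcm}(2,3,m)$, this holds exactly when $N=\mathrm{lcm}(2,3,m)$, namely $6,12,30$ for $m=3,4,5$. The restriction $m\le 5$ is nothing but the hypothesis $N>p-1$: Riemann--Hurwitz gives $p-1=N\,\ml/2\pi=N(7/6-1/m)$, so $N>p-1\Leftrightarrow 1/m>1/6$. The same computation yields $2g+k-2=N(7/6-1/m)$ for orientable $S$, while the number of boundary components equals $k=N/\mathrm{ord}(\theta(e_1))$, the index of $\langle\theta(e_1)\rangle$ in $\Z_N$. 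Substituting $\theta(e_1)=-(a_1+a_2+a_3)$ and running over the finitely many admissible triples produces exactly the pairs $(g,k)$ in the statement: for $m=3$ one gets $\mathrm{ord}(\theta(e_1))\in\{2,6\}$, hence $(g,k)=(2,3)$ or $(3,1)$, whereas for $m=4,5$ one always gets $\mathrm{ord}(\theta(e_1))=N$, hence $k=1$ and $(g,k)=(6,1)$ or $(15,1)$.

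For uniqueness up to topological conjugation I would invoke the dictionary, recalled in Section \ref{ss:MCG}, between topological conjugacy classes of $\Z_N$-actions and equivalence classes of BSK-maps under $\mathrm{Aut}(\Z_N)\cong\Z_N^\ast$ together with the $\La$-automorphisms induced by the mapping class group of the orbifold. The decisive point is that $\mathrm{Aut}(\Z_N)$ by itself already acts transitively on the admissible triples with a fixed number $k$ of boundary components (it preserves $k$ because it preserves $\mathrm{ord}(\theta(e_1))$): a unit $u$ fixes $a_1=N/2$ and multiplies $a_2,a_3$, acting on them through reduction modulo $3$ and modulo $m$, and a direct check shows this is transitive on the admissible pairs in each fixed-$k$ family. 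Since $\theta$ and $u\theta$ share the same kernel, all the maps in one family define one and the same action subgroup of $\mathrm{Homeo}(S)$, so each surface in the statement carries a single topological type, and the two surfaces occurring for $m=3$ are separated by $k$.

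Because the enumeration above is short, the real care lies elsewhere. First, one must justify the orientability of $S$: as the unique period cycle satisfies $\theta(c_1)=0$, every orientation-reversing element of $\ker\theta$ is a glide along the axis of a reflection already present in the kernel, so no cross-cap is created and the kernel has orientable signature; making this rigorous means appealing to the general formula for the sign of $\ker\theta$ rather than to the naive heuristic that the mere presence of a glide reflection forces non-orientability. Second, one must ensure that the algebraic equivalence of BSK-maps really coincides with topological conjugacy, so that the transitivity found above is neither weakened by, nor in conflict with, the braid and boundary-twist automorphisms of $\La$; for $m=3$ this amounts to checking that the interchange of the two order-$3$ cone points is already realised inside $\mathrm{Aut}(\Z_6)$ by the unit $5$, so that it introduces no identification beyond those used above. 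I expect this bookkeeping, rather than any hard computation, to be the main obstacle.
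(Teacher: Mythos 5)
Your proposal is correct and follows essentially the same route as the paper: BSK-epimorphisms from the NEC group with signature $(0;+;[2,3,m];\{(\,)\})$, orientability and the boundary count $k=N/\mathrm{ord}(\theta(e))$ via the sign/period-cycle criterion of Lemma \ref{sign k}, genus via Hurwitz--Riemann, and uniqueness by letting $\Z_N^\ast$ act on the admissible triples (the paper normalizes to $(N/2,N/3,N/m)$ by the Chinese Remainder Theorem for $m=4,5$ and enumerates the two classes $(3,2,2)$, $(3,2,4)$ for $m=3$, which is exactly your fixed-$k$ transitivity check, with the cone-point swap indeed redundant since the unit $5$ realizes it). Your additional observation that $u\theta$ and $\theta$ share the same kernel, hence literally the same image subgroup of $\mathrm{Homeo}(S)$, is a clean way to phrase the conjugacy conclusion, but it does not change the substance of the argument.
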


\subsection{Actions with a 2-punctured disc with two corners as the quotient orbifold.} The orders of the cone points are either $2,m$, $m\ge 2$, or $3,m$, where $m\in\{3,4,5\}$. We consider these two cases separately.
\begin{thm}\label{d221}
There is an action of a cyclic group of order $N$ on a bordered
surface $S$ with a disc having $2$ cone points of orders $2,m$ and
$2$ corners as the quotient orbifold if and only if
$N=\mathrm{lcm}(2,m)$.
In such case  $S$ is  non-orientable, it has  $N/2$ boundary
components, {its genus is} equal to  $g=2+(m-2)N/2m$, and
 such an action is unique up to
topological conjugation.
\end{thm}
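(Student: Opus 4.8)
The plan is to realise the action by a surface-kernel (BSK) epimorphism of non-euclidean crystallographic groups and then to classify such epimorphisms up to the natural equivalence. First I would let $\Lambda$ be the NEC group uniformising the quotient orbifold, namely a disc carrying two cone points of orders $2$ and $m$ and one boundary circle with two corners of order $2$, i.e. the group with signature $(0;+;[2,m];\{(2,2)\})$ (that the corners have order $2$ is forced by matching the orbifold Euler characteristic, exactly as in Section \ref{sec:types}). Its canonical presentation has elliptic generators $x_1,x_2$ with $x_1^2=x_2^m=1$, a connecting generator $e_1$, and reflections $c_0,c_1,c_2$ subject to $c_i^2=1$, $(c_0c_1)^2=(c_1c_2)^2=1$, $c_2=e_1^{-1}c_0e_1$, together with the long relation $x_1x_2e_1=1$. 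By the correspondence reviewed in Section \ref{ss:MCG}, topological conjugacy classes of $\Z_N$-actions with this quotient are in bijection with the $\mathrm{Aut}(\Lambda)\times\mathrm{Aut}(\Z_N)$-equivalence classes of BSK-maps $\theta\colon\Lambda\to\Z_N$, so the whole statement reduces to analysing these maps.

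Next I would read off the constraints on $\theta$. Preservation of the cone-point orders forces $\theta(x_1)$ to be the unique involution $N/2$ (so $N$ is even) and $\theta(x_2)$ to have order $m$, whence $m\mid N$ and $\theta(x_2)=(N/m)u$ with $\gcd(u,m)=1$; the long relation then determines $\theta(e_1)$. Since $c_i^2=1$ and $\Z_N$ is abelian, each $\theta(c_i)\in\{0,N/2\}$, while $c_2=e_1^{-1}c_0e_1$ gives $\theta(c_2)=\theta(c_0)$, and preservation of the two order-$2$ corners forces $\theta(c_0)\neq\theta(c_1)$. Hence exactly one of the two reflection classes $\{c_0,c_2\}$ and $\{c_1\}$ is sent to $0$ and the other to $N/2$, giving precisely two reflection patterns. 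The image of $\theta$ is the subgroup generated by an involution and an element of order $m$, that is the cyclic subgroup of order $\mathrm{lcm}(2,m)$; thus $\theta$ is onto iff $N=\mathrm{lcm}(2,m)$, which is the asserted existence condition.

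I would then identify $S=\cH/\ker\theta$. For orientability, note that in either pattern some reflection $c$ lies in $\ker\theta$, so $\theta(c)=0$ although the sign homomorphism satisfies $\sigma(c)=1$; therefore $\sigma$ does not factor through $\theta$, and by the orientability criterion (Lemma \ref{big o-r}) the surface $S$ is non-orientable, uniformly in all cases. The number of boundary components follows from the oval-counting formula of Section \ref{sec:preli}: the image $\Z_N$ permutes the boundary circles of $S$ transitively (they all lie over the single boundary of the orbifold), and the stabiliser of one circle is the order-$2$ subgroup generated by the nontrivial reflection image, giving $k=N/2$. Finally Riemann--Hurwitz gives $\chi(S)=N\chi^{\mathrm{orb}}=-(m-1)N/m$, and $\chi(S)=2-g-k$ yields $g=2+(m-2)N/2m$; equivalently the algebraic genus is $p=1+(m-1)N/m$, which indeed satisfies $N>p-1$.

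It remains to prove uniqueness, which I expect to be the main obstacle. Automorphisms of $\Z_N$ act on $\theta$ by multiplication by a unit $v$; being odd, $v$ fixes $N/2$ and the reflection images, while $v\mapsto v\bmod m$ maps $(\Z/N)^{\ast}$ onto $(\Z/m)^{\ast}$, so $\mathrm{Aut}(\Z_N)$ renders the parameter $u$ irrelevant and leaves only the two reflection patterns. To merge these I would exhibit an automorphism of $\Lambda$ interchanging $c_0$ and $c_1$, induced by the self-homeomorphism of the disc that rotates the boundary by one corner (a half-twist supported in a boundary collar) while fixing both cone points; this is legitimate since the two corners have equal order and the two cone points, having distinct orders, are individually preserved. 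Pulling $\theta$ back by this automorphism exchanges the two patterns, so all BSK-maps lie in a single equivalence class and the action is unique up to topological conjugation. The delicate points are verifying that the boundary rotation genuinely induces an automorphism respecting the full presentation of $\Lambda$, and that the oval-counting and orientability tests are applied correctly; the remainder is routine bookkeeping with the presentation.
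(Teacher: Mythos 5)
Your overall strategy --- classifying BSK-epimorphisms $\theta\colon\Lambda\to\Z_N$ for $\Lambda$ of signature $(0;+;[2,m];\{(2,2)\})$ up to $\mathrm{Aut}(\Lambda)\times\mathrm{Aut}(\Z_N)$, normalising the reflection pattern by an automorphism that rotates the period cycle, normalising $\theta(x_2)$ by a unit of $\Z_N$ via the surjectivity of $\Z_N^\ast\to\Z_m^\ast$, and reading off $k$ and $g$ from the boundary-counting lemma and the Hurwitz--Riemann formula --- is exactly the paper's route; the rotation automorphism you postulate is precisely Lemma \ref{cycle perm}, whose proof verifies algebraically that it respects the presentation. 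However, your orientability argument is genuinely wrong. You argue: some reflection $c$ lies in $\ker\theta$ while the sign homomorphism is nontrivial on $c$, hence the sign homomorphism does not factor through $\theta$, hence $S$ is non-orientable. The failure of the sign homomorphism to factor through $\theta$ is \emph{not} an orientability criterion for bordered quotients: every bordered surface group contains reflections, so the sign homomorphism never factors through any BSK-epimorphism, and your test would declare every bordered quotient non-orientable. That is false --- for instance in Theorem \ref{d21} (signature $(0;+;[m,n];\{(\,)\})$) one also has $\theta(c)=0$ for the canonical reflection, yet the surface there is orientable. (You also cite Lemma \ref{big o-r} as the criterion; the actual criterion is part (a) of Lemma \ref{sign k}.)

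The correct argument, and the one the paper gives, uses the non-orientable word criterion of Lemma \ref{sign k}(a): $S$ is non-orientable if and only if $\Gamma=\ker\theta$ contains an orientation-reversing word in which the reflections of $\Lambda$ belonging to $\Gamma$ do not appear. With the normalisation $\theta(c_0)=\theta(c_2)=N/2$, $\theta(c_1)=0$, $\theta(x_1)=N/2$, the element $x_1c_0$ lies in $\ker\theta$, is orientation-reversing, and does not involve $c_1$ (the only canonical reflection in $\Gamma$); hence it is a non-orientable word and $S$ is non-orientable (in the other reflection pattern, $x_1c_1$ does the job). This is a one-line fix, but it is a different --- and here the essential --- idea: non-orientability is detected by a glide-reflection-like element of the kernel, not by the mere presence of reflections in it, which carries no information. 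The remainder of your proposal (the existence condition $N=\mathrm{lcm}(2,m)$, the count $k=N/2$, the genus computation, and the uniqueness argument via the cycle-rotating automorphism together with $\mathrm{Aut}(\Z_N)$) is correct and matches the paper's proof.
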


\begin{thm}\label{d222}
There is an action of a cyclic group of order $N$ on a bordered
surface $S$ with a disc having $2$ cone points of orders $3,m$,
where $m\in\{3,4,5\}$, and $2$ corners as the quotient orbifold if
and only if $N=\mathrm{lcm}(2,3,m)$, $S$ has $N/2$ boundary
components, and
\begin{itemize}
\item if $m=3$ then $S$ is orientable of genus $2$;
\item if $m=4$ then $S$ is non-orientable of genus $7$;
\item if $m=5$ then $S$ is orientable of genus $8$.
\end{itemize}
Furthermore, for $m=3$ there are two different topological types of
such action, and for $m=4,5$ the action is unique up to topological
conjugation.
\end{thm}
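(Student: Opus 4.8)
The plan is to realise every such action as a surface-kernel epimorphism onto $\Z_N$ from a fixed non-euclidean crystallographic group, and then to classify these epimorphisms up to the equivalence recalled in Section~\ref{ss:MCG}. Since the quotient orbifold is a disc carrying two cone points of orders $3$ and $m$ together with two corner points, the relevant NEC group $\La$ has signature $(0;+;[3,m];\{(2,2)\})$; I would work with its canonical presentation on elliptic generators $x_1,x_2$, a connecting generator $e$ and reflections $c_0,c_1,c_2$ subject to
\[
x_1^3=x_2^m=c_0^2=c_1^2=c_2^2=(c_0c_1)^2=(c_1c_2)^2=1,\qquad e^{-1}c_0e=c_2,\qquad x_1x_2e=1.
\]
An action as in the statement is then the same datum as an epimorphism $\theta\colon\La\to\Z_N$ with bordered surface kernel. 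Writing $\Z_N$ additively, the last two relations immediately give $\theta(c_0)=\theta(c_2)$ and $\theta(e)=-\theta(x_1)-\theta(x_2)$, so $\theta$ is encoded by the pair $(\theta(x_1),\theta(x_2))$ together with the reflection values $\theta(c_0),\theta(c_1)$.

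First I would pin down $N$ and the surface $S$. Smoothness of $\theta$ forces $\theta(x_1)$ and $\theta(x_2)$ to have orders exactly $3$ and $m$, each reflection to map to an involution, and --- since $S$ must be an honest bordered surface --- each corner to remain of order $2$, i.e. $\theta(c_0)+\theta(c_1)$ has order $2$; hence $\{\theta(c_0),\theta(c_1)\}=\{0,N/2\}$. As the subgroup of a cyclic group generated by elements of orders $2,3,m$ has order $\mathrm{lcm}(2,3,m)$, surjectivity of $\theta$ yields $N=\mathrm{lcm}(2,3,m)$. Substituting the signature into the area formula gives $\ml/2\pi=7/6-1/m$, so from $-\chi(S)=N\,\ml/2\pi$ and $-\chi(S)=p-1$ we obtain $p-1=N(7/6-1/m)$, whence the hypothesis $N>p-1$ becomes $m<6$, leaving $m\in\{3,4,5\}$. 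The number of boundary components equals $N/\lvert\lr{\theta(c_0),\theta(c_1)}\rvert=N/2$ by the period-cycle counting of Section~\ref{sec:preli}; orientability is decided by whether the orientation character of $\La$ factors through $\theta$, which reduces to the parity of $N/2$ and gives orientable surfaces for $m=3,5$ and a non-orientable one for $m=4$; and the genus then follows from $\chi(S)=1-p$ with $N$ and $N/2$ known, producing exactly the three surfaces in the statement.

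It remains to count equivalence classes, where the essential datum is the pair $(\theta(x_1),\theta(x_2))$ modulo $\mathrm{Aut}(\La)\times\mathrm{Aut}(\Z_N)$; the two possibilities for $(\theta(c_0),\theta(c_1))$ are interchanged by a symmetry of the period cycle and so do not enlarge the count. For $m\in\{4,5\}$ the cone points have distinct orders, so $\mathrm{Aut}(\La)$ cannot swap them, yet a direct computation shows that $\mathrm{Aut}(\Z_N)$ already acts transitively on the admissible pairs consisting of an element of order $3$ and an element of order $m$, giving a single class. For $m=3$ both cone points have order $3$ and the admissible pairs split into the two orbits $\{\theta(x_1)=\theta(x_2)\}$ and $\{\theta(x_1)=-\theta(x_2)\}$, distinguished by whether $\theta(e)=0$; these are the two topological types.

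The hard part will be this last step: to be certain that the two orbits for $m=3$ are genuinely inequivalent I must rule out any automorphism of $\La$ inverting a single elliptic generator, since such a map would merge them. This is exactly where the mapping class group computations of Section~\ref{ss:MCG} are needed --- the admissible automorphisms are those induced by self-homeomorphisms of the orbifold, which invert the two cone points only simultaneously (an effect already captured by multiplication by $-1$ in $\mathrm{Aut}(\Z_N)$) or interchange them when their orders agree. I would therefore concentrate the real work on identifying the image of the mapping class group in $\mathrm{Out}(\La)$ and comparing it with the $\mathrm{Aut}(\Z_N)$-action, the divisibility and Euler-characteristic bookkeeping being routine.
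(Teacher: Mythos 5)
Your outline reproduces the paper's strategy (same signature $(0;+;[3,m];\{(2,2)\})$, same normalization of the reflections and of $\theta(x_1)$, the same CRT/transitivity argument giving uniqueness for $m=4,5$, and the same two candidate classes $(\theta(x_1),\theta(x_2))=(2,2)$ and $(2,4)$ for $m=3$), but the one step you explicitly defer --- proving the two classes for $m=3$ are genuinely inequivalent --- is precisely the nontrivial content of the theorem, and your stated plan for closing it does not go through as written. Section \ref{ss:MCG} computes $\mathrm{Out}(\Lambda)$ only for the signatures $(1;-;[m];\{(\,)\})$, $(0;+;[m];\{(\,),(\,)\})$ and $(0;+;[m,n];\{(\,)\})$; the signature $(0;+;[3,3];\{(2,2)\})$ is not treated there, so there is nothing in that section to quote. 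The paper fills this gap by citing \cite[Lemma 4.6 and Proposition 4.14]{BCCS}: for every $\phi\in\mathrm{Aut}(\Lambda)$ the element $\phi(e)$ is conjugate to $e$ or $e^{-1}$, hence the \emph{order of} $\theta(e)$ is an invariant of the equivalence class; since the two normalized maps have $\theta(e)=2$ (order $3$) and $\theta(e)=0$ (order $1$), they are inequivalent. Your assertion that admissible automorphisms "invert the two cone points only simultaneously" is exactly what must be proved (an automorphism inverting a single $x_i$ would merge the two classes, as you note), so either you carry out a new mapping-class-group computation for the disc with two cone points and two corners, analogous to Lemma \ref{d2-lem}, or you invoke the BCCS results; as it stands this is a missing step, not routine bookkeeping.

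A second, lesser flaw: the orientability argument is not correct as stated. The orientation character of $\Lambda$ \emph{never} factors through $\theta$, because $c_1\in\ker\theta$ is orientation-reversing, so that criterion is vacuous; and "parity of $N/2$" is a coincidence of the three cases rather than a principle. The correct tool is Lemma \ref{sign k}(a): $S$ is non-orientable iff $\ker\theta$ contains a non-orientable word, i.e.\ an orientation-reversing word avoiding $c_1$ with image $0$; equivalently here, iff $N/2\in\langle\theta(x_1),\theta(x_2)\rangle$. For $m=4$ the word $x_2^2c_0$ works ($\theta(x_2^2c_0)=6+6=0$ in $\Z_{12}$), so $S$ is non-orientable; for $m=3,5$ the images of $x_1,x_2,e$ are even while $\theta(c_0)=\theta(c_2)=N/2$ is odd, so every orientation-reversing word avoiding $c_1$ has odd, hence nonzero, image and $S$ is orientable. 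Your conclusions agree with the paper's, but the justification needs this argument.
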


\section{Preliminaries}\label{sec:preli}
In principle, we  use a combinatorial approach, based on Riemann
unformization theorem for compact Riemann surfaces, its
generalization for non-orientable or bordered surfaces with dianalytic
structures of Klein surfaces, good knowledge of discrete group of
isometries of the hyperbolic plane and some elementary covering
theory. For the reader's convenience, we review the terminology of \cite{BEGG} used in this paper.

\subsection{Hurwitz-Nielsen geometrization and its generalizations.}\label{geometrization}
Let $G$ be a finite group of orientation-preserving
self-homeo\-mor\-phisms of a closed orientable surface $S_g$ of genus $g$, $g\ge 2$.

\smallskip
By \cite{Hur} and \cite{Niel}, there exists a structure of a Riemann
surface on $S_g$, with respect to which
the elements of $G$ act as
conformal automorphisms. This result was generalized to the
case of actions containing orientation-reversing self-homeomorphisms
and to closed non-orientable surfaces by B. Kerejarto \cite{Ker}, and for bordered surfaces in a more recent monograph of Alling and Greanleaf
\cite{AG}, who introduced the concept of a Klein surface. Thus, although the paper concerns topological classification  of topological actions,   we assume, whenever necessary, that a surface has such a structure of a bordered Klein surface, and the elements of $G$ act on it as dianalytic automorphims. This assumption allows for effective conformally-algebraic methods  described in the following subsections.

\subsection{Non-euclidean crystallographic groups.}
By a non-euclidean crystallographic group (NEC-group in short) we
mean a discrete and cocompact subgroup of the group ${\mathcal G}$ of
all isometries of the hyperbolic plane ${\mathcal
 H}$.
The algebraic structure of such a group $\Lambda$ is encoded in its signature:
\begin{equation}\label{sign}
s({\Lambda}) = (g;\pm;[m_1, \ldots , m_r];\{(n_{11}, \ldots , n_{1s_1}), \ldots , (n_{k1}, \ldots , n_{ks_k})\}),
\end{equation}
where the brackets $(n_{i1},\ldots ,n_{is_i})$ are called {\it the
period cycles}, the integers $n_{ij}$ are the {\it link periods},
 $m_i$ {\it proper periods }
and finally $g$ the {\it orbit genus} of ${\Lambda}$. A group $\Lambda$ with signature (\ref{sign}) has
the presentation with the following generators

\medskip
\hspace{1mm}
\begin{tabular}{ll}
$x_i$ & \;\;\;\;\;\;\;\;\;\;\;\;\;\;\;\;\;\;\;\;\;\;\;\;\;\;\;\;\;\;\;\;\;\;\;\;\;\;\;\;\;\;\;\;\;\; \; for $1\le i\le r$,\\
$c_{ij}$, {$e_i$} & \;\;\;\;\;\;\;\;\;\;\;\;\;\;\;\;\;\;\;\;\;\;\;\;\;\;\;\;\;\;\;\;\;\;\;\;\;\;\;\;\;\;\;\;\;\; \; for $1\le i\le k, 0\leq j \leq s_i$,\\
$a_i$, $b_i$ & \;\;\;\;\;\;\;\;\;\;\;\;\;\;\;\;\;\;\;\;\;\;\;\;\;\;\;\;\;\;\;\;\;\;\;\;\;\;\;\;\;\;\;\;\;\; \; for $1\le i\le g$ if the sign is $+$,\\
$d_i$ & \;\;\;\;\;\;\;\;\;\;\;\;\;\;\;\;\;\;\;\;\;\;\;\;\;\;\;\;\;\;\;\;\;\;\;\;\;\;\;\;\;\;\;\;\;\; \; for $1\le i\le g$ if the sign is $-$,
\end{tabular}

\medskip
\noindent
subject to the relations

\medskip
\hspace{1mm}
\begin{tabular}{ll}
$x_i^{m_i}=1$ &\;\,for $1\le i\le r$,\\
$c_{ij}^2=(c_{ij-1}c_{ij})^{n_{ij}}=1$ &\;\,for $1\le i\le k, 0\leq j \leq s_i$,\\
$c_{is_i}=e_ic_{i0}e_i^{-1}$ &\;\,for $1\leq i\leq k$,\\
$ x_1\cdots x_r e_1\cdots e_k[a_1,b_1]\cdots[a_g,b_g]=1$ &\;\,if the sign is $+$,\\
$x_1\cdots x_r e_1\cdots e_kd_1^2\cdots d_g^2 =1$ &\;\,if the sign is $-$,\\
\end{tabular}

\noindent
where $[x,y]=xyx^{-1}y^{-1}$.
Elements of any system of generators satisfying the above relation
will be called {\it canonical generators}. The elements $x_i$ are
elliptic transformations, $a_i, b_i$ hyperbolic translations, $d_i$
glide reflections and $c_{ij}$ hyperbolic reflections. Reflections
$c_{ij-1}$ and $c_{ij}$ are called consecutive. It is essential for
applications that every element of finite order in
$\Lambda$ is conjugate either to a canonical reflection, or to a
power of some canonical elliptic element $x_i$, or else to a power of
the product of two consecutive canonical reflections.

{The orbit space ${\mathcal H}/\Lambda$ is a hyperbolic orbifold, with underlying surface of topological genus $g$ with $k$ boundary components, and it is orientable if the sign is $+$ and non-orientable otherwise.
The image in ${\mathcal H}/\Lambda$ of the fixed point of the canonical elliptic generator $x_i$ is called {\it cone
point} of order $m_i$, whereas the image of the fixed point of the product of two consecutive canonical reflections $c_{ij-1}c_{ij}$ is called {\it
corner point} of order $n_{ij}$.}

\smallskip
Now, an abstract group with such presentation can be realized as an
NEC-group $\Lambda$ if and only if the value
\begin{equation}\label{area}\varepsilon g +k -2
+\sum_{i=1}^r{\left(1-{1\over{m_i}}\right)} +
 {1\over2} \sum_{i=1}^k \sum_{j=1}^{s_i}
 {\left(1-{1\over{n_{ij}}}\right)}
\end{equation}
is positive,
where $\varepsilon = 2$  if the sign is $+$, or $\varepsilon=1$ otherwise. This value turns out to be the normalized
 hyperbolic area
 $\mu (\Lambda)$ of an arbitrary
 fundamental region for such a group, and we have the following
 Hurwitz-Riemann formula
\begin{equation}\label{HR}
[\Lambda : \Lambda'] = \dfrac{\mu(\Lambda')}{ \mu(\Lambda)}
\end{equation}
for a subgroup $\Lambda'$ of finite index in an NEC-group $\Lambda$.

\smallskip

Finally, NEC-groups without orienta\-tion-re\-ver\-sing elements are
Fuchsian groups. They have signatures $(g;+;[m_1,\ldots ,
m_r];\{-\})$ usually abbreviated as $(g;m_1,\ldots , m_r)$.
Given an NEC-group $\Lambda$ containing orientation-reversing elements,
its subgroup
$\Lambda^+$   consisting of the orientation-preserving
elements is called the {\it canonical Fuchsian subgroup of}
$\Lambda$, and by \cite{S}, for  $\Lambda$ with signature (\ref{sign}), $\Lambda^+$ has signature
\begin{equation}\label{can-fuchs}(\varepsilon g +k-1;m_1,m_1,
\ldots ,m_r,m_r,n_{11},\ldots , n_{ks_k}).
\end{equation}
A torsion free Fuchsian group $\Gamma$ is called a {\it surface group} and it
has signature $(g;-)$.

We will also use other results concerning relationship between the signatures
of an NEC-group and its finite index subgroup proved in Chapter 2 of \cite{BEGG}.

\subsection{Bordered Riemann surfaces and their groups of automorphisms}
By the Riemann uniformization theorem, every closed Riemann surface $S$ of genus $g \geq 2$ can be identified with the orbit space ${\mathcal H}/\Gamma$ of the hyperbolic
plane with respect to an action of a Fuchsian group $\Gamma$ isomorphic to the fundamental group of $S$.
A Klein surface is a compact bordered topological surface equipped with a
dianalitic structure - historically it is also called bordered
Riemann surface. For a given Klein surface $S$, Alling and Greenleaf
\cite{AG} constructed certain canonical double cover $S^+$ being a
Riemann surface, {such that $S$ is the quotient of $S^+$ by an action of an anti-holomorphic involution with fixed points}.
The {\it algebraic genus} $p=p(S)$  of $S$ is defined as the
genus of $S^+$ and it follows from the construction 
 that {$p$} coincides with the rank of the fundamental group of $S$,
and so
for a surface of topological genus $g$ having $k$
boundary components it is equal to {$p=\varepsilon g+k-1$},
where $\varepsilon =2$ if $S$ is orientable and $\varepsilon =1$ otherwise.
It is well known (see \cite{BEGG} for example) that any compact
Klein surface $S$ of algebraic genus $p\geq 2$ can be represented as
${\mathcal H}/\Gamma$ for some NEC-group $\Gamma$. If $S$ has
topological genus $g$ and $k$ boundary components, then $\Gamma$ can
be chosen to be a {\it bordered surface group}, i.e, an NEC-group with
the signature
\begin{equation}\label{surface}
(g;\pm;[\;];\{(\,),\stackrel{k}{\ldots},(\,)\}),\end{equation}
whose only elements of finite order are reflections.
It has the presentation
$$
\langle a_1, b_1, \ldots, a_g, b_g, e_1, \ldots, e_k,c_1, \ldots, c_k \; |\; c_i^2, [e_i,c_i], e_1 \ldots e_k[a_1,b_1] \ldots [a_g,b_g] \rangle
$$
if the sign is $+$, or
$$
\langle d_1, \ldots, d_g, e_1, \ldots, e_k,c_1, \ldots, c_k \; | \;
c_i^2, [e_i,c_i], e_1 \ldots e_k d_1^2 \ldots d_g^2 \rangle
$$
otherwise. Finally, a finite group $G$ is a group of automorphisms
of $S={\mathcal H}/\Gamma$ if and only if $G\cong \Lambda /\Gamma$
for some NEC-group $\Lambda$. A convenient way of defining an
action of a group $G$ on a bordered surface $S$ is by means of an epimorphism
$\theta: \Lambda \to G$ whose kernel is a bordered surface group.
In such a case $S= {\mathcal H}/\Gamma$,
where $\Gamma= \ker \theta$. We shall refer to such an epimorphism as
to a bordered-surface-kernel epimorphism (BSK in short) or
smooth-epimorphism.

\smallskip
Two  actions of $G$ on $S$ are topologically conjugate
(by a homeomor\-phism of $S$) if and only if the associated smooth
epimorphisms are equivalent in the sense of the next definition (see
\cite[Proposition 2.2]{BCCS}). We say that two smooth epimorphisms
$\theta_i\colon\Lambda \to G$, $i=1,2$, are {\it equivalent} if and
only if there exist automorphisms $\phi\colon\Lambda\to\Lambda$ and
$\varphi\colon G\to G$ such that the following diagram is
commutative.
\begin{equation}\label{def top equiv}
\xymatrix{
\,\Lambda\ar[r]^\phi&{\,\Lambda}\\\;G\ar[r]^\varphi\ar@{<-}[u]^{\theta_1}&G\!\ar@{<-}[u]_{\theta_2} }.
\end{equation}

\subsection{Some elementary algebra} For integers $a, b
$ we denote by $(a, b)$ their greatest common
divisor and we use  additive notation for cyclic groups
$\mathbb{Z}_N=\mathbb{Z}/N\mathbb{Z}$ throughout the whole paper.
Furthermore, by abuse of language, we  write $a\in \mathbb{Z}_N
$ for a non-negative integer $a <N$.
To avoid unnecessary parentheses, we denote expressions of the form $a/(bc)$ simply as $a/bc$.

We will need the following version of the classical Chinese
Remainder Theorem
\begin{lemma}\label{CRT}
Given integers $a,b$, the system of congruences
$$
\begin{cases}
x \equiv a \tmod m\\
x \equiv b \tmod n
\end{cases}
$$
has a solution if and only if $a\equiv b \tmod t$, where $t=(m,n)$
and this solution is unique up to ${\rm lcm} (m,n)$. \hfill $\square$
\end{lemma}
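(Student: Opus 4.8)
The plan is to prove the two directions of the equivalence separately, and then to treat uniqueness. For necessity I would suppose that $x$ solves both congruences, so that $m\mid x-a$ and $n\mid x-b$. Since $t=(m,n)$ divides both $m$ and $n$, it divides $x-a$ and $x-b$, hence it divides their difference $(x-a)-(x-b)=b-a$; that is, $a\equiv b \tmod t$.

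For sufficiency I would eliminate one variable. The first congruence forces $x=a+mk$ for some integer $k$, and substituting into the second reduces the whole system to the single linear congruence $mk\equiv b-a \tmod n$ in the unknown $k$. I would then either invoke the standard solvability criterion for such a congruence (solvable exactly when $(m,n)=t$ divides $b-a$, which is precisely the hypothesis), or, more explicitly, write $um+vn=t$ with $u,v$ integers via B\'ezout and $b-a=ts$, and simply check by hand that $x=a+mus$ works: indeed $x-a=mus$ is divisible by $m$, while $x-b=mus-(b-a)=s(mu-t)=-svn$ is divisible by $n$.

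For uniqueness I would take two solutions $x,x'$; then $x-x'$ is divisible by both $m$ and $n$, hence by their least common multiple, so $x\equiv x'\tmod{\mathrm{lcm}(m,n)}$. Conversely, any integer congruent to a given solution modulo $\mathrm{lcm}(m,n)$ is again a solution, so the solution set is a single residue class modulo $\mathrm{lcm}(m,n)$, exactly as asserted.

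This is a classical and entirely elementary argument, so there is no genuine obstacle. The only point requiring a little care is the solvability criterion for the reduced congruence $mk\equiv b-a \tmod n$, and I would prefer the explicit B\'ezout construction above precisely because it sidesteps that criterion and instead exhibits a solution directly.
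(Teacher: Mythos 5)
Your proof is correct and complete: the necessity direction, the explicit B\'ezout construction for sufficiency (indeed $x-b = mus - ts = -svn$ is divisible by $n$), and both halves of the uniqueness claim all check out. The paper itself states this lemma without proof, marking it as a classical fact, so your argument simply supplies the standard elementary proof that the authors chose to omit.
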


The following useful  result can be proved using Dirichlet's
theorem on arithmetic progression (see \cite{GSZ} for a more elementary, direct, argument).

\begin{lemma}\label{DT}
Given an integer $N$ and its divisor $n$, the reduction map
$\Z_N^\ast \to \Z_n^\ast$ is a group epimorphism.
\end{lemma}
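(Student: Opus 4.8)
The plan is to verify first that reduction indeed defines a group homomorphism $\Z_N^\ast\to\Z_n^\ast$, and then to establish surjectivity, which is the only substantial point. For well-definedness, note that since $n$ divides $N$, any two integers congruent modulo $N$ are congruent modulo $n$; moreover if $(a,N)=1$ then a fortiori $(a,n)=1$, so the class of an integer $a$ in $\Z_N^\ast$ is sent to a genuine unit in $\Z_n^\ast$. Compatibility of reduction with multiplication is immediate, so we obtain a homomorphism of groups, and it remains only to prove that it is onto.

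For surjectivity I would argue as follows. Let $a$ represent an arbitrary element of $\Z_n^\ast$, so that $0\le a<n$ and $(a,n)=1$. By Dirichlet's theorem on primes in arithmetic progressions, the progression $\{a+kn : k\ge 0\}$ contains infinitely many primes; choose one such prime $p$ with $p>N$. Then $p$ cannot divide $N$, whence $(p,N)=1$ and the class of $p$ lies in $\Z_N^\ast$. Since $n$ divides $N$, reducing $p$ modulo $N$ does not alter its residue modulo $n$, and that residue equals $a$ by the choice of $p$. Thus the class of $p$ in $\Z_N^\ast$ maps to the prescribed element $a\in\Z_n^\ast$, which proves surjectivity.

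The only genuine obstacle is this surjectivity, and its difficulty is entirely absorbed by Dirichlet's theorem; the underlying point is simply that within a fixed residue class modulo $n$ that is coprime to $n$ one can always find a representative coprime to the possibly larger modulus $N$. I note that this can also be seen by a direct, elementary argument avoiding Dirichlet: letting $q_1,\dots,q_s$ be the primes dividing $N$ but not $n$, each $q_i$ is coprime to $n$, so $n$ is invertible modulo $q_i$ and the classes $a+kn$ realize every residue modulo $q_i$; choosing $x\equiv a\pmod n$ together with $x\not\equiv 0\pmod{q_i}$ for every $i$ via the Chinese Remainder Theorem (Lemma \ref{CRT}) yields a representative coprime to $N$ with the required reduction.
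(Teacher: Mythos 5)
Your proof is correct and follows essentially the same route as the paper: both establish surjectivity by applying Dirichlet's theorem to the progression $a+kn$ and taking a prime in it (you are slightly more careful in choosing the prime $p>N$ to guarantee $(p,N)=1$, a detail the paper's proof leaves implicit). Your closing elementary alternative via Lemma \ref{CRT} corresponds to the direct argument the paper attributes to \cite{GSZ}, so nothing in your proposal departs from the paper's approach.
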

\begin{pf}Let $a \in \zz_n^\ast$, then $(a,n)=1$ and so by {Dirichlet}
theorem on arithmetic progression there exists infinitely many
primes $A$ of the form $a + bn$ and so $A\in \zz_N^\ast$ and its
reduction modulo $n$ is equal to $a$.
\end{pf}

We will also  need
\begin{lemma}[Harvey, \cite{Har}]\label{lem-Harv}
The group $\zz_N$ is generated by three elements $a$, $b$, $c$ of
orders $m$, $n$, $l$ and such that $a+b+c=0$ if and only if
\begin{itemize}
\item[(i)] $N=\mathrm{lcm}(m,n)=\mathrm{lcm}(m,l)=\mathrm{lcm}(n,l)$, and
\item[(ii)] if $N$ is even, then exactly one of the numbers $N/m$, $N/n$, $N/l$ is even. \hfill $\square$
\end{itemize}
\end{lemma}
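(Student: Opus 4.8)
The plan is to reduce everything to the prime-power components of $\Z_N$ and translate the three hypotheses into statements about $p$-adic valuations. Writing $N=\prod_p p^{e_p}$, the Chinese Remainder Theorem (Lemma~\ref{CRT}) gives a ring isomorphism $\Z_N\cong\prod_p\Z_{p^{e_p}}$ under which an element $a$ corresponds to the tuple $(a_p)_p$ of its components, and $a$ has order $m=\prod_p p^{\alpha_p}$ precisely when each $a_p$ has order $p^{\alpha_p}$, i.e.\ $v_p(a_p)=e_p-\alpha_p$, where $v_p$ denotes the $p$-adic valuation (with the convention $v_p(0)=e_p$). The relation $a+b+c=0$ holds if and only if it holds componentwise, and $\langle a,b,c\rangle=\Z_N$ if and only if in each component at least one of $a_p,b_p,c_p$ is a unit. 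Finally, $N=\mathrm{lcm}(m,n)=\mathrm{lcm}(m,l)=\mathrm{lcm}(n,l)$ is equivalent to: for every prime $p\mid N$ at least two of $\alpha_p,\beta_p,\gamma_p$ equal $e_p$; and since the $2$-adic valuation of $N/m$ equals $v_2(a_2)$, condition (ii) says that at the prime $2$ exactly one of the three components is a non-unit.

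For necessity, fix $p\mid N$ and put $v_a=v_p(a_p)$, and similarly $v_b,v_c$. I would first show that $\min(v_a,v_b,v_c)$ is attained at least twice: if it were attained by a single index, that value would be $<e_p$ (the other two valuations being strictly larger yet at most $e_p$), and then $a_p+b_p+c_p$ would be a unit multiple of $p^{\min}$, in particular nonzero, contradicting $a_p+b_p+c_p=0$. Generation forces this common minimum to be $0$, so two of $v_a,v_b,v_c$ vanish, meaning two of $\alpha_p,\beta_p,\gamma_p$ equal $e_p$; since the orders divide $N$, this is exactly condition (i). At $p=2$ one checks in addition that the three components cannot all be units, because a sum of three odd residues is odd and hence nonzero modulo $2^{e_2}$; combined with the previous step this leaves exactly one non-unit at the prime $2$, which is condition (ii).

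For sufficiency I would construct the components prime by prime and glue them by Lemma~\ref{CRT}. Fix $p\mid N$; by (i) at least two of $\alpha_p,\beta_p,\gamma_p$ equal $e_p$. If all three equal $e_p$ — which by (ii) can occur only for odd $p$ — I take the three components to be $1,1,-2$, all units summing to $0$. Otherwise exactly one exponent, say $e_p-w$ with $1\le w\le e_p$, is smaller; I set the corresponding component equal to $-p^{w}$ (valuation exactly $w$, hence order $p^{e_p-w}$) and the other two equal to $p^{w}-1$ and $1$, which are units since $w\ge1$ and whose sum with $-p^w$ is $0$ (when $w=e_p$ this simply reads $0,-1,1$). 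In every case the prescribed component receives the prescribed order and at least two components are units, so after gluing the resulting $a,b,c\in\Z_N$ have orders $m,n,l$, satisfy $a+b+c=0$, and generate $\Z_N$.

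The only genuinely delicate point is the prime $2$: this is exactly where condition (ii) is both forced in the necessity argument and precisely what the construction requires in the sufficiency argument, the underlying fact being that three odd residues cannot sum to zero modulo a power of $2$. The remaining work — verifying that the local orders are exactly, and not merely divide, $p^{\alpha_p}$, and that the assignment respects which of $a,b,c$ carries the reduced order at each prime — is routine valuation bookkeeping.
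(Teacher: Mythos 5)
Your proof is correct. Note, however, that the paper does not prove this lemma at all: it is quoted from Harvey's paper \cite{Har} and closed with a box, so there is no internal argument to compare against. What you have produced is a self-contained elementary proof, and its structure is sound: the CRT splitting $\Z_N\cong\prod_p\Z_{p^{e_p}}$, the translation of orders into valuations, the ultrametric observation that in $a_p+b_p+c_p=0$ the minimal valuation must be attained at least twice (forcing, together with generation, two of the local orders to be full), the parity obstruction at $p=2$ (three odd residues cannot sum to zero modulo $2^{e_2}$) giving exactly condition (ii), and the explicit local models $1,1,-2$ and $-p^{w},\,p^{w}-1,\,1$ for sufficiency all check out, including the degenerate case $w=e_p$. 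It is worth pointing out that your prime-by-prime bookkeeping is essentially the same arithmetic content as the Maclachlan decomposition $(A,A_1,A_2,A_3)$ which the paper records immediately after the lemma as an equivalent form of condition (i): the primes where all three local orders are full constitute $A$, and the primes where $a$, $b$ or $c$ carries the reduced order constitute $A_1$, $A_2$, $A_3$ respectively, with condition (ii) saying that the even prime lands in one of $A_1,A_2,A_3$. So your argument not only proves the lemma but also makes that subsequent remark of the paper transparent; the only cost relative to citing Harvey is length.
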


The condition (i) of Lemma \ref{lem-Harv} is equivalent to existence
of pairwise relatively prime integers $A, A_1, A_2, A_3$ for which
\begin{equation}\label{Maclachlan}
m=AA_2A_3,\ n=AA_1A_3,\ l=AA_1A_2,\ N=AA_1A_2A_3.
\end{equation}
The condition (ii) of Lemma \ref{lem-Harv} is that one of the
numbers $A_1$, $A_2$, $A_3$ is even if $N$ is even. The quadruple
$(A,A_1,A_2,A_3)$ is called {\it Maclachlan decomposition} of the
triple $(m,n,l)$
after R. Hidalgo \cite{Hidalgo}.

\section{Periodic self-homeomor\-phisms of compact bordered surfaces of big periods}\label{sec:types}
From (3.1.0.1) and (3.1.0.2) on page 61 in \cite{BEGG} we immediately obtain the following result.

\begin{lemma}\label{aut rep}
There exists a structure of a bordered Klein surface
$S=S_{g,\pm}^{k}$ of topological genus $h$, with $k$ boundary
components and orientable or not according to the sign being plus or
minus having a dianalytic automorphism $\varphi$ of order $N$ if and
only if $\zz_N \cong \Lambda / \Gamma$, where $\Gamma $ and
$\Lambda$ have signatures respectively
\begin{equation}\label{sygn pm}
(g;\pm ;[\;];\{(\,)\stackrel{k}{\ldots}\,,(\,)\}) \;\; {\rm and} \;\;
(g'; \pm ;[m_1, \ldots, m_r];\{C_1,\ldots,C_{k'}\}),
\end{equation}\\[-4mm]
where each cycle $C_i$ is either empty or consists of an even number
of periods equal to $2$. Furthermore, non-empty cycles do not
appear for odd $N$. \hfill $\square$
\end{lemma}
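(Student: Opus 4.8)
The plan is to recast the statement in terms of smooth epimorphisms and then read the constraints on the signature of $\Lambda$ directly off the requirement that $\ker\theta$ be a bordered surface group. By the criterion recalled above, $\zz_N$ acts as a group of dianalytic automorphisms of $S=\mathcal{H}/\Gamma$, with $\Gamma$ the bordered surface group of signature (\ref{surface}), if and only if $\zz_N\cong\Lambda/\Gamma$ for some NEC group $\Lambda\supseteq\Gamma$, i.e. if and only if there is a smooth epimorphism $\theta\colon\Lambda\to\zz_N$ with $\ker\theta=\Gamma$. Thus the entire content is to decide which signatures of $\Lambda$ carry such a $\theta$, and the conditions (3.1.0.1)--(3.1.0.2) of \cite{BEGG}, which express smoothness through the orders of the images of the canonical generators, are exactly what I would feed the cyclic group $\zz_N$ into.

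The decisive step concerns the canonical reflections. Since $c_{ij}^2=1$ and $\zz_N$ is abelian, each $\theta(c_{ij})$ lies in the $2$-torsion of $\zz_N$, namely $\{0,N/2\}$ if $N$ is even and $\{0\}$ if $N$ is odd. Smoothness forces $\theta$ to be injective on every cyclic subgroup $\langle c_{i,j-1}c_{ij}\rangle$ of order $n_{ij}$, so $n_{ij}=\mathrm{ord}\big(\theta(c_{i,j-1})+\theta(c_{ij})\big)$; being the order of a sum of two involutions, the right-hand side is at most $2$, whence every link period equals $2$ and the two consecutive images are the two distinct elements of $\{0,N/2\}$. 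Consequently the images of the reflections alternate between $0$ and $N/2$ along any non-empty period cycle.

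It then remains to invoke the closure relation $c_{is_i}=e_ic_{i0}e_i^{-1}$: as conjugation is trivial in $\zz_N$, it yields $\theta(c_{is_i})=\theta(c_{i0})$, and together with the alternation this forces the number $s_i$ of link periods in a non-empty cycle to be even, i.e. each non-empty cycle consists of an even number of periods equal to $2$. If $N$ is odd the $2$-torsion is trivial, every reflection maps to $0$, so each consecutive product $c_{i,j-1}c_{ij}$ falls into $\ker\theta$; a non-empty cycle would thereby place an elliptic element of order $2$ in the surface group $\Gamma$, which is impossible, so no non-empty cycle can occur. For the converse I would verify that every signature satisfying these conditions does support a smooth epimorphism onto $\zz_N$ with bordered-surface kernel, which is again furnished by (3.1.0.1)--(3.1.0.2). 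I expect the only real obstacle to be the faithful translation of the abstract smoothness conditions of \cite{BEGG} into the reflection-image bookkeeping above, so that the preservation of link periods is correctly seen as the mechanism emptying the period cycles of the kernel; the remaining $2$-torsion parity count is elementary.
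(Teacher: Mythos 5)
Your argument for the substantive (forward) direction is correct and complete, and it actually does more than the paper, which disposes of this lemma with a bare citation of conditions (3.1.0.1)--(3.1.0.2) of \cite{BEGG} and no written proof. You derive the signature constraints from first principles: since $\zz_N$ is abelian, every canonical reflection maps into the $2$-torsion $\{0,N/2\}$; since the kernel is a bordered surface group, whose only nontrivial elements of finite order are (conjugates of) reflections, no proper power of $c_{i,j-1}c_{ij}$ may lie in the kernel, so each link period equals the order of $\theta(c_{i,j-1})+\theta(c_{ij})$, which is at most $2$; hence all link periods are $2$ and consecutive reflections take the two distinct values $0$ and $N/2$; the relation $c_{is_i}=e_ic_{i0}e_i^{-1}$ then forces the alternating string to close up, i.e.\ $s_i$ even; and for odd $N$ the trivial $2$-torsion would place the elliptic element $c_{i,j-1}c_{ij}$ in the kernel, excluding non-empty cycles. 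This is precisely the mechanism that the cited conditions of \cite{BEGG} encode, so what your route buys is self-containedness; it also matches the bookkeeping the paper later redoes inside the proof of Lemma \ref{sign k}, where the same alternation $0,N/2,0,\dots$ reappears.

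The one genuine misstep is your plan for the converse. The backward implication of the lemma \emph{assumes} that one is given $\Gamma\trianglelefteq\Lambda$ with $\zz_N\cong\Lambda/\Gamma$ and the stated signatures; all that is needed is the standard observation that $S=\mathcal{H}/\Gamma$ is then a bordered Klein surface of topological genus $g$ with $k$ boundary components, orientable or not according to the sign, on which $\Lambda/\Gamma\cong\zz_N$ acts faithfully and dianalytically. What you propose instead --- that every signature of the stated shape supports a smooth epimorphism onto $\zz_N$ with bordered-surface kernel --- is both unnecessary and false for a fixed $N$: for instance $(0;+;[3,3];\{(\,)\})$ satisfies all the conditions in the lemma, yet admits no smooth epimorphism onto $\zz_5$, since smoothness forces the proper periods to divide $N$. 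Deciding which signatures actually support smooth epimorphisms onto $\zz_N$ is exactly the content of the classification theorems proved later in the paper, not of this lemma; replacing that step of your plan by the one-line uniformization argument above makes the proof complete.
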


For the rest of this section we assume that $\Lambda$ and
$\Gamma$ are as in Lemma \ref{aut rep} and we denote by
$\theta\colon\Lambda\to\Lambda/\Gamma$ the canonical projection,
where $\Lambda/\Gamma$ will be identified with $\zz_N$. Recall that
$\theta$ is called BSK-epimorphism. In order to state the next
lemma, we need one definition and some notation. We define a {\it
non-orientable word} to be a word $w$ in the canonical generators of
$\Lambda$ and their inverses, such that $w$ defines an
orientation-reversing isometry of $\mathcal{H}$ and the reflections
$c_{ij}$ of $\Lambda$ which belong to $\Gamma$ do not appear in
$w$. For $i=1,\dots,k'$ let $s_i$ be the length of the cycle $C_i$
in the signature of $\Lambda$. Let $l_i$ denote the order of
$\theta(e_i)$ in $\zz_N$ and
$$
t_i=\begin{cases}
0&\textrm{if\ $s_i=0$\ and\ $c_{i0}\notin\Gamma$}\\
N/l_i&\textrm{if\ $s_i=0$\ and\ $c_{i0}\in\Gamma$}\\
s_iN/4&\textrm{if\ $s_i>0$.}
\end{cases}
$$
The following lemma can be deduced from \cite{BEGG}.
\begin{lemma}\label{sign k}
Let $S$ be the surface $\mathcal{H}/\Gamma$. Then
\begin{itemize}
\item[(a)] $S$ is non-orientable if and only if $\Gamma$ contains a non-orientable word;
\item[(b)] the number of boundary components of $S$ is $k=t_1+\dots+t_{k'}$.
\end{itemize}
\end{lemma}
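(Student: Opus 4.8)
The plan is to read off both statements from the signature of $\Gamma=\ker\theta$. By Lemma \ref{aut rep} this signature has the form $(h;\pm;[\,];\{(\,),\stackrel{k}{\ldots},(\,)\})$, so orientability of $S$ is the sign and the number $k$ of boundary components equals the number of empty period cycles. Thus it suffices to determine, from $\theta$ and the signature of $\Lambda$, the sign of $\Gamma$ and the number of empty period cycles lying over each period cycle $C_i$ of $\Lambda$. I would do this directly through the action of $\Lambda/\Gamma\cong\Z_N$ on cosets and on the fixed lines of the canonical reflections, rather than quoting the general subgroup-signature formulas, although these are exactly the specialisations of the results in Chapter 2 of \cite{BEGG}.

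First I would prove (b). A boundary component of $S$ is the image of the fixed line of a reflection of $\Gamma$, and the reflections of $\Gamma$ are precisely the $\Lambda$-conjugates $g c_{ij} g^{-1}$ with $c_{ij}\in\Gamma$; since $\Z_N$ is abelian, $\theta(g c_{ij} g^{-1})=\theta(c_{ij})$, so the line $gL$ carries a reflection of $\Gamma$ if and only if $\theta(c_{ij})=0$. Hence the boundary components over $C_i$ are the $\Gamma$-orbits of those translates, i.e. a double coset set $\Gamma\backslash\Lambda/\mathrm{Stab}_\Lambda(L)=\Z_N/\theta(\mathrm{Stab}_\Lambda(L))$. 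For an empty cycle ($s_i=0$) one has $\mathrm{Stab}_\Lambda(L)=\langle e_i,c_{i0}\rangle$, so the count is $0$ if $c_{i0}\notin\Gamma$ (no translate carries a reflection of $\Gamma$) and $|\Z_N/\langle\theta(e_i)\rangle|=N/l_i$ if $c_{i0}\in\Gamma$; these are the first two values of $t_i$. For a non-empty cycle I would first observe that the link periods equal $2$ force the consecutive reflections to alternate between $\Gamma$ and its complement: if $\theta(c_{i,j-1})=\theta(c_{ij})$ then the order-two rotation $c_{i,j-1}c_{ij}$ would lie in $\Gamma$, contradicting that $\Gamma$, being a bordered surface group, has no elliptic elements; since $s_i$ is even this alternation closes up consistently. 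Thus exactly $s_i/2$ of the edges per cycle carry boundary, each lifting to $N$ boundary arcs (the stabiliser of an edge is just $\langle c_{ij}\rangle$ and $\theta(c_{ij})=0$). Every corner is smoothed, and the local model shows that unfolding a boundary arc across the adjacent non-kernel mirror $c_{i,j+1}$ (with $\theta(c_{i,j+1})=N/2$) continues it along the $N/2$-shifted lift of the \emph{same} edge; hence both ends of an arc $(\epsilon_j,a)$ attach to $(\epsilon_j,a+N/2)$ and each boundary circle consists of exactly two arcs of one edge. This gives $(s_i/2)(N/2)=s_iN/4$ boundary components, the third value of $t_i$, and summing over $i$ yields (b).

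For (a) I would introduce the homomorphism $\omega\colon\Lambda\to\Z_2$ that agrees with the orientation character $\sigma$ (sending orientation-preserving isometries to $0$ and orientation-reversing ones to $1$) on the generators $x_i,a_i,b_i,d_i,e_i$ and on every reflection $c_{ij}\notin\Gamma$, but with $\omega(c_{ij})=0$ for $c_{ij}\in\Gamma$. The only relations to check are $(c_{i,j-1}c_{ij})^{2}=1$, where the alternation just established gives $\omega(c_{i,j-1})+\omega(c_{ij})=1$ and $2\cdot 1=0$, and the long relation, on which $\omega$ vanishes; so $\omega$ is well defined. The point is that $\omega|_\Gamma$ is the orientation character of the underlying surface $\pi_1(S)=\Gamma/\langle\langle\text{reflections of }\Gamma\rangle\rangle$: on the canonical generators of $\Gamma$ in the $+$ presentation it is identically $0$, so $S$ is orientable exactly when $\omega|_\Gamma\equiv 0$. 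Now a non-orientable word $w\in\Gamma$ uses no $c_{ij}\in\Gamma$ and is orientation-reversing, so $\omega(w)=\sigma(w)=1$; hence if such $w$ exists then $\omega|_\Gamma\ne 0$ and $S$ is non-orientable. Conversely, if $S$ is non-orientable then $\omega(\gamma)=1$ for some $\gamma\in\Gamma$; deleting from a word for $\gamma$ all letters $c_{ij}\in\Gamma$ produces a word $w'$ with $\theta(w')=\theta(\gamma)=0$ (the deleted letters have $\theta=0$) and $\sigma(w')=\omega(\gamma)=1$, i.e. a non-orientable word in $\Gamma$. This proves (a).

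The main obstacle, as I see it, is in part (a): establishing rigorously that $\omega|_\Gamma$ really is the orientation character of the underlying surface $S$, so that $\omega|_\Gamma\equiv 0$ is equivalent to the sign of $\Gamma$ being $+$. The subtlety is that the mirror boundary must not count towards non-orientability—reflections of $\Gamma$ bound a two-sided collar—whereas a genuine crosscap (a glide-reflection generator $d_i$ in the $-$ presentation) must; the homomorphism $\omega$ is precisely engineered to record the latter and ignore the former, but identifying it with $w_1$ of $\pi_1(S)$ is where one must either invoke the explicit kernel presentation or the orientability criterion of \cite{BEGG}. In part (b) the only delicate point is the non-empty cycle, namely pinning down the gluing at the smoothed corners that pairs each edge-lift solely with its $N/2$-translate and so produces the factor $1/4$.
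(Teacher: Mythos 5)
Your part (b) is correct and takes a genuinely different route from the paper. The paper obtains the counts $N/l_i$ and $s_iN/4$ by citing the subgroup--signature theorems of \cite{BEGG} (Theorems 2.3.2, 2.3.3 and 2.4.2, 2.4.4), adding only the alternation argument: consecutive reflections with equal $\theta$-images would make $c_{i,j-1}c_{ij}$ an orientation-preserving torsion element of $\Gamma$. You reproduce that alternation argument verbatim, but replace the citations by a direct covering-theoretic count: boundary circles of $S$ correspond to $\Gamma$-orbits of mirrors of reflections in $\Gamma$, counted by double cosets $\Gamma\backslash\Lambda/\mathrm{Stab}_\Lambda$, which gives $0$ or $N/l_i$ for empty cycles, and for non-empty cycles your local unfolding analysis (each arc over a kernel-edge is glued at both corners to its $N/2$-shifted translate, so each boundary circle consists of exactly two arcs over a single edge) correctly yields $(s_i/2)(N/2)=s_iN/4$. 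Modulo standard facts about stabilizers of edges and boundary geodesics in the canonical fundamental domain, this is sound, and it is more self-contained than the paper's appeal to \cite{BEGG}. (One small point: well-definedness of your $\omega$ at the relations $(c_{i,j-1}c_{ij})^2=1$ does not need the alternation at all, since every element of $\Z_2$ is killed by $2$; one should instead also check the relation $c_{is_i}=e_ic_{i0}e_i^{-1}$, which holds because $\theta$ is conjugation-invariant.)

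Part (a), however, has a genuine gap, and you point to it yourself: the identification of $\omega|_\Gamma$ with the orientation character $w_1$ of $S$. What you actually prove --- well-definedness of $\omega$ and the equivalence ``$\Gamma$ contains a non-orientable word $\iff\omega|_\Gamma\not\equiv 0$'' (both directions of which are correct, since deleting kernel-reflection letters changes neither $\theta$ nor $\omega$) --- only reformulates statement (a) as ``$S$ is non-orientable $\iff\omega|_\Gamma\not\equiv 0$''; that reformulation is not easier than (a), it \emph{is} (a). Your justification, that $\omega$ vanishes on the canonical generators of $\Gamma$ in the $+$ presentation, is an assertion rather than a proof: for an orientation-preserving canonical generator $\gamma$ of $\Gamma$ one has $\omega(\gamma)=(\omega-\sigma)(\gamma)$, and there is no a priori reason why the homomorphism $\omega-\sigma$, which counts kernel-reflection letters of a word in the generators of $\Lambda$ modulo $2$, should vanish on $\gamma$. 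Your two proposed fallbacks do not rescue this: invoking ``the orientability criterion of \cite{BEGG}'' is exactly the paper's proof of (a) (a citation of Theorems 2.1.2 and 2.1.3), at which point the whole $\omega$-apparatus is superfluous; and ``the explicit kernel presentation'' is the Hoare--Singerman/Reidemeister--Schreier computation that carries all the content and is nowhere carried out. A genuine repair would be, for instance, to interpret $\omega-\sigma$ as the mod-$2$ intersection number with the union $M$ of mirrors of reflections of $\Gamma$ (a $\Lambda$-invariant set, since $\Gamma\trianglelefteq\Lambda$), and then use the free action of $\Gamma$ on $\mathcal{H}\setminus M$ to see that $\sigma$ computes $w_1$ on the stabilizer of a component; nothing of this kind appears in the proposal.
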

\begin{pf}Indeed (a) follows from Theorems 2.1.2 and 2.1.3 in \cite{BEGG}.
Now, with the above notation, each empty period cycle of $\Lambda$
whose corresponding canonical reflection belongs to $\Gamma$, produces
$N/l_i$ empty period cycles in $\Gamma$
by Theorem 2.3.3 in \cite{BEGG} (see also theorems 2.4.2 and 2.4.4 therein).
Next, let $c_0, c_1, \ldots, c_{2s}$ be
a cycle of canonical reflections of $\Lambda$ corresponding to a
nonempty period cycle $(2, \stackrel{2s}{\ldots}\,,2)$. Then
$\theta(c_i)=0$ or $N/2$. Observe however that two consecutive
canonical reflections $c_{i-1},c_i$ have different images, since
otherwise $c_{i-1}c_i$ would be an orientation preserving torsion
element of $\Gamma$. So this cycle of reflections is mapped {either} on
$0,N/2,0,N/2, \ldots, 0$ or on $N/2,0,N/2, \ldots, 0,N/2$.
In the former case each $c_i$ for even $i$ produces
in $\Gamma$ $N/2$ empty period cycles, while in the latter case the
same is true for every odd $i$, and so each nonempty period cycle of length $2s$ produces in $\Gamma$ $s(N/2)$ empty period
cycles in virtue of Theorem 2.3.2 in \cite{BEGG} (see also Theorem 2.4.4 therein).
\end{pf}

Observe that we can determine the topological type of the surface
$\mathcal{H}/\Gamma$ by using Lemma \ref{sign k} together with the
Hurwitz-Riemann formula.

\begin{lemma}\label{big o-r}
If $\Gamma$ is a bordered surface group of algebraic genus $p$ and
$N>p-1$ then $\Lambda$ has one of the following signatures:

\medskip
\begin{tabular}{rlrl}
$(1)$&$(0;+;[\;];\{(2,2,2,2,2,2)\})$, & $(2)$&$(0;+;[\;];\{(\,),(2,2)\})$,\\
$(3)$&$(1;-;[\;];\{(2,2)\})$, & $(4)$&$(0;+;[m];\{(2,2)\})$,\\
$(5)$&$(0;+;[m];\{(2,2,2,2)\})$, & $(6)$&$(1;-;[m];\{(\,)\})$,\\
$(7)$&$(0;+;[m,n];\{(\,)\})$, & $(8)$&$(0;+;[m];\{(\,),(\,)\})$,\\
$(9a)$&$(0;+;[2,3,m];\{(\,)\})$, $m=3,4,5,\;\;\;\;\;\;\;\;\;\;\;\;\;\;\;\;\;\;$ & $(9b)$&$(0;+;[2,2,m];\{(\,)\})$,\\
$(10a)$&$(0;+;[3,m];\{(2,2)\})$, $m=3,4,5$, &
$(10b)$&$(0;+;[2,m];\{(2,2)\}).$
\end{tabular}
\end{lemma}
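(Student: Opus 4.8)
The plan is to reduce the claim to the single numerical constraint $0<\mu(\Lambda)<1$ and then to enumerate directly all admissible signatures satisfying it.

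First I would compute $\mu(\Gamma)$. Since $\Gamma$ is a bordered surface group its signature (\ref{surface}) has no proper periods and only empty period cycles, so (\ref{area}) gives $\mu(\Gamma)=\varepsilon g+k-2=p-1$ by the formula $p=\varepsilon g+k-1$; as $\Gamma$ is a genuine (hyperbolic) NEC-group we have $p\ge2$, hence $p-1\ge1$. Because $\Lambda/\Gamma\cong\Z_N$, the Hurwitz--Riemann formula (\ref{HR}) reads $N=\mu(\Gamma)/\mu(\Lambda)$, so $\mu(\Lambda)=(p-1)/N$, and the hypothesis $N>p-1$ yields at once $0<\mu(\Lambda)<1$. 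This two-sided bound is the only information I would use in the sequel.

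Next I would exploit the structure of (\ref{area}) for $\Lambda$. By Lemma \ref{aut rep} each period cycle of $\Lambda$ is either empty or consists of an even number $s_i\ge2$ of link periods all equal to $2$, contributing $s_i/4\ge1/2$; each proper period $m_i\ge2$ contributes $1-1/m_i\in[1/2,1)$. Moreover $k'\ge1$, since $\Lambda$ contains the reflections of $\Gamma$ and hence has a non-empty set of period cycles. Writing $\mu(\Lambda)=(\varepsilon g'+k'-2)+B+D$, with $B$ the sum over proper periods and $D$ the sum over non-empty cycles, the integer $\varepsilon g'+k'-2=\mu(\Lambda)-(B+D)\le\mu(\Lambda)<1$, so $\varepsilon g'+k'\le2$. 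Combined with $k'\ge1$ and the fact that the sign $-$ forces $g'\ge1$, this leaves exactly three admissible triples (sign$,g',k'$): namely $(+,0,1)$, $(+,0,2)$ and $(-,1,1)$.

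Finally I would run the enumeration. For $(+,0,2)$ and $(-,1,1)$ the integer part is $0$, so $0<B+D<1$; since every proper period and every non-empty cycle contributes at least $1/2$, there is exactly one such ``atom'', and $D<1$ forces a non-empty cycle to have length $2$. This gives the signatures (8),(2) and (6),(3) respectively. For $(+,0,1)$ the integer part is $-1$, so $1<B+D<2$; the same $1/2$-counting bounds the number of atoms by three and the length of any non-empty cycle by six. Running through the resulting finite list of configurations and testing each against $1<B+D<2$ then isolates precisely the signatures (1),(5),(4),(7),(9b),(9a),(10b),(10a). The only real work --- and the main obstacle --- lies in this last bookkeeping: one must check the sharp inequalities that separate the surviving cases (for instance $1/m+1/n<1$ for (7), $1/m_1+1/m_2>1/2$ for (10a)--(10b), and the spherical-triangle conditions $1/m_1+1/m_2+1/m_3>1$ for (9a)--(9b)) and verify that no further period triple qualifies. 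Assembling the three cases produces exactly the ten signatures of the statement.
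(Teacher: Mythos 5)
Your proposal is correct and follows essentially the same route as the paper: both translate $N>p-1$ via the Hurwitz--Riemann formula into $\mu(\Lambda)<1$, observe that each proper period and each (even, all-$2$) period cycle contributes at least $1/2$, deduce that the integer $\varepsilon g'+k'-2$ lies in $\{-1,0\}$ (your three triples $(+,0,1)$, $(+,0,2)$, $(-,1,1)$), and finish by the same finite enumeration with the same sharp inequalities ($1/m+1/n<1$, $1/m_1+1/m_2>1/2$, $1/m_1+1/m_2+1/m_3>1$). The paper organizes the last step by the number $r$ of proper periods, but the content is identical.
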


\begin{pf}By the Hurwitz-Riemann formula, $N>p-1$ is equivalent to
$\mu(\Lambda)<1$. For $\Lambda$ as in Lemma \ref{aut rep}, we have
$$\mu(\Lambda)=\varepsilon g'+k'-2+\sum_{i=1}^r\left(1-\frac{1}{m_i}\right)+\frac{s}{4},$$
where $s$ is the sum of lengths of non-empty period cycles. Observe
that $s$ is even and $k'>0$. We have $-1\le \varepsilon g'+k'-2\le 0$.

Suppose that $\varepsilon g'+k'-2=0$. Then, since $(1-1/m_i)\ge
1/2$, $0\le r\le 1$. If $r=0$, then $s=2$ and $\Lambda$ has
signature (2) or (3). If $r=1$, then $s=0$ and $\Lambda$ has
signature (6) or (8).

Suppose that $\varepsilon g'+k'-2=-1$, hence $(g',k')=(0,1)$. Then
$0\le r\le 3$. If $r=0$, then $s=6$ and $\Lambda$ has the signature
(1). If $r=1$, then $s=2$ or $4$ and $\Lambda$ has signature (4) or
(5). If $r=2$, then $s=0$ or $2$. In the former case $\Lambda$ has
signature (7). In the latter case $1/m_1+1/m_2>1/2$ and $\Lambda$
has signature (10a) or (10b). Finally, if $r=3$ then $s=0$,
$1/m_1+1/m_2+1/m_3>1$ and $\Lambda$ has signature (9a) or (9b).
\end{pf}

We close this section by a technical but simple lemma, which will be
very useful in the next section.
\begin{lemma}\label{cycle perm}
Suppose that $C_i$ is a non-empty cycle in the signature of
$\Lambda$. Then for some $\phi\in\mathrm{Aut}(\Lambda)$,
$\theta\circ\phi$ maps the corresponding reflections
$(c_{i0},c_{i1},\dots,c_{is_i})$ on $(N/2,0,\dots,N/2,0,N/2)$.
\end{lemma}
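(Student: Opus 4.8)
The plan is to analyze the action of $\Z_N$ on the nonempty period cycle $C_i$ by tracking how the reflections $(c_{i0},c_{i1},\dots,c_{is_i})$ are distributed. By Lemma~\ref{aut rep}, the cycle $C_i$ consists of link periods all equal to $2$, so by the argument in the proof of Lemma~\ref{sign k} each reflection $c_{ij}$ satisfies $\theta(c_{ij})\in\{0,N/2\}$, and two consecutive reflections necessarily have distinct images (otherwise $c_{ij-1}c_{ij}$ would be an orientation-preserving torsion element of $\Gamma$, which is torsion-free apart from the reflections it contains). Consequently the sequence $\theta(c_{i0}),\dots,\theta(c_{is_i})$ alternates between $0$ and $N/2$, and since the cycle has even length $s_i$, it is one of the two alternating sequences $(0,N/2,\dots,0)$ or $(N/2,0,\dots,N/2)$ — the two differing only by a global swap of the values $0\leftrightarrow N/2$.

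The key idea is therefore to produce an automorphism $\phi\in\mathrm{Aut}(\Lambda)$ that, after composing with $\theta$, forces the first alternating pattern to become the second one if it is not already in that form. First I would dispose of the easy case: if $\theta$ already sends the reflections to $(N/2,0,\dots,N/2)$, take $\phi=\mathrm{id}$. In the remaining case $\theta$ realizes $(0,N/2,\dots,0)$, and I need an automorphism of $\Lambda$ that reverses the cycle or otherwise permutes the reflections $c_{ij}$ so as to interchange the even-indexed and odd-indexed positions. The natural candidate is the automorphism sending the cyclic block $c_{i0},c_{i1},\dots,c_{is_i}$ to its reversal, conjugated appropriately by $e_i$ using the relation $c_{is_i}=e_ic_{i0}e_i^{-1}$; such cycle-reversing and cycle-permuting maps on the canonical generators are standard automorphisms of NEC-groups (the admissible moves on period cycles), and I would cite or reconstruct the relevant automorphism from the combinatorial description of $\mathrm{Aut}(\Lambda)$ used throughout Section~\ref{sec:preli}. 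Reversing the sequence $(0,N/2,\dots,0)$ of odd length $s_i+1$ yields the same sequence read backwards, which keeps the endpoints at $0$; so a pure reversal is not enough, and I would instead combine reversal with a shift by one position within the cyclic structure of the period cycle, which sends even indices to odd indices and hence swaps the two alternating patterns.

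I would then verify directly that the composite $\theta\circ\phi$ sends $(c_{i0},\dots,c_{is_i})$ to $(N/2,0,\dots,N/2,0,N/2)$ by checking the images position by position, using only that $\phi$ permutes the reflections in the prescribed way and that $\theta$ is a homomorphism with the alternating values already computed. The main obstacle I anticipate is exhibiting a genuine automorphism $\phi$ of $\Lambda$ — as opposed to a mere abstract permutation of generators — that effects the required shift-plus-reversal of a single period cycle while fixing (or harmlessly adjusting) all the other canonical generators so that all the defining NEC relations in~(\ref{sign}) are preserved. This bookkeeping with the relation $c_{is_i}=e_ic_{i0}e_i^{-1}$ and the long product relation is the delicate point; once the correct generator substitution is written down and checked against the relations, the conclusion about $\theta\circ\phi$ is a routine evaluation.
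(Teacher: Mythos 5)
Your first paragraph matches the paper's argument exactly (the values alternate between $0$ and $N/2$ along the cycle, because consecutive reflections with equal $\theta$-images would put the orientation-preserving torsion element $c_{ij-1}c_{ij}$ into $\Gamma$), but the proof has a genuine gap at precisely the point you flag as ``the delicate point'': you never exhibit the automorphism $\phi$, and the shape you propose for it --- a reversal of the cycle combined with a shift --- is a detour that runs into the very bookkeeping problem you anticipate. A reversal of a period cycle is not harmless: to preserve the relation $c_{is_i}=e_ic_{i0}e_i^{-1}$ it forces $e_i\mapsto e_i^{-1}$ (up to conjugation), and that disturbs the long relation $x_1\cdots x_re_1\cdots e_k[a_1,b_1]\cdots[a_g,b_g]=1$ (or its non-orientable analogue), so further adjustments of other generators become necessary. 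Since, as you yourself note, the reversal acts trivially on the alternating value pattern (it is a palindrome), including it buys nothing and costs a lot.

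What the paper does, and what you should write down, is the pure shift by one, which fixes every other canonical generator, so all the feared bookkeeping evaporates: define $\phi$ by $\phi(c_{ij})=c_{ij-1}$ for $j=1,\dots,s_i$, $\phi(c_{i0})=e_i^{-1}c_{is_i-1}e_i$, and the identity on all remaining generators (in particular $\phi(e_i)=e_i$). The relations $\phi(c_{ij})^2=1$ are clear; for $2\le j\le s_i$ the relation $\bigl(\phi(c_{ij-1})\phi(c_{ij})\bigr)^2=1$ is an old relation with indices shifted; for $j=1$ one computes, using $c_{i0}=e_i^{-1}c_{is_i}e_i$, that $\phi(c_{i0})\phi(c_{i1})=e_i^{-1}c_{is_i-1}e_i\,c_{i0}=e_i^{-1}(c_{is_i-1}c_{is_i})e_i$, whose square is $1$; and the cycle-closing relation $\phi(c_{is_i})=\phi(e_i)\phi(c_{i0})\phi(e_i)^{-1}$ reads $c_{is_i-1}=e_i(e_i^{-1}c_{is_i-1}e_i)e_i^{-1}$, which is trivially true. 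Since $e_i$ and all generators outside the cycle are fixed, the long relation is untouched. Finally, if $\theta$ realizes the pattern $(0,N/2,\dots,N/2,0)$, then $\theta\circ\phi$ sends $c_{i0}$ to $\theta(c_{is_i-1})=N/2$ (the index $s_i-1$ is odd, since $s_i$ is even) and $c_{ij}$ to $\theta(c_{ij-1})$ for $j\ge 1$, which is exactly the required pattern $(N/2,0,\dots,N/2,0,N/2)$. So your overall strategy is the paper's, but the missing one-line construction is the actual content of the lemma, and the construction you sketch in its place is the wrong (harder) one.
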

\begin{pf}
 By Lemma \ref{aut rep},
all the periods in $C_i$ are equal to $2$ and by the proof of Lemma
\ref{sign k}, consecutive canonical reflections
$c_{i0},c_{i1},\dots,c_{is_i}$ are mapped either on
$N/2,0,\dots,N/2,0,N/2$, or on $0,N/2,\dots,0,N/2,0$. In the former
case we take $\phi$ to be the identity, while in the latter case we
define $\phi$ by $\phi(c_{ij})=c_{ij-1}$ for $j=1,\dots,s_i$,
$\phi(c_{i0})=e_i^{-1}c_{is_i-1}e_i$, and the identity on the
remaining generators of $\Lambda$.
\end{pf}



\section{Automorphisms of NEC-groups vs mapping class groups.}\label{ss:MCG}
From  diagram (\ref{def top equiv}) in  Section \ref{sec:preli} we
see that for a topological classification of group actions via smooth epimorphisms we need to know how to calculate automorphisms groups of
NEC-groups $\Lambda$. As we shall see, we need to know these
automorphisms up to conjugation, which means that we actually need
the groups ${\rm Out}(\Lambda)$ of outer automorphisms of
$\Lambda$'s. From the previous section we see that in this paper we  need them only for three signatures
$(1;-;[m];\{(\,)\})$, $(0;+;[m];\{(\,),(\,)\})$, $(0;+;[m,n];\{(\,)\})$,
and  the outer automorphism groups for these NEC-groups were found in
 \cite[\S 4]{BCCS} by using a connection between ${\rm
Out}(\Lambda)$ and the mapping class group of the orbifold
${\mathcal H}/\Lambda$.  For reader's convenience we review these results and their proofs (illustrated with figures and easier to follow then the proofs in \cite{BCCS}).   For an NEC-group $\Lambda$, let
$\mathrm{Mod}({\mathcal H}/\Lambda)$ be the group of isotopy classes
of homeomorphisms over ${\mathcal H}/\Lambda$ which map a cone point
to a cone point of the same order, and analogously for the corner
points, and $\mathrm{PMod}({\mathcal H}/\Lambda)$ be the group of
isotopy classes of homemorphisms over ${\mathcal H}/\Lambda$ which
fix the cone points and the corner points. For two elements
$\phi_1$, $\phi_2$ of $\mathrm{Mod}({\mathcal H}/\Lambda)$, $\phi_1
\phi_2$ means applying $\phi_2$ first and then applying $\phi_1$.
Let ${\rm Out}_0(\Lambda)$ be the subgroup of ${\rm Out}(\Lambda)$
which acts trivially on the set of conjugacy classes of the
stabilisers of the fixed points of elliptic elements of $\Lambda$.
Observe that these conjugacy classes are in one to one correspondence with the integers
$m_1,\dots,m_r,n_{11},\dots,n_{ks_k}$, and hence ${\rm Out}_0(\Lambda)$  is a subgroup of finite index of ${\rm Out}(\Lambda)$.

\begin{lemma}\cite[Corollary 4.4]{BCCS}\label{L:PMod_Out0}
If $\mathrm{PMod}({\mathcal H}/\Lambda)$ has finite order $n$, then ${\rm Out}_0(\Lambda)$ has order at most $n$.
\hfill $\square$
\end{lemma}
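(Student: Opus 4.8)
The plan is to exhibit a natural surjective homomorphism $\mathrm{PMod}(\mathcal{H}/\Lambda)\to\mathrm{Out}_0(\Lambda)$; once this is in place the desired inequality $|\mathrm{Out}_0(\Lambda)|\le n$ is immediate, because the image of a group of order $n$ under a homomorphism has order at most $n$. The whole argument rests on the classical principle that, for a hyperbolic orbifold, self-homeomorphisms and automorphisms of the orbifold fundamental group $\Lambda$ determine one another up to inner automorphism and up to isotopy.

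First I would construct the homomorphism $\Psi\colon\mathrm{Mod}(\mathcal{H}/\Lambda)\to\mathrm{Out}(\Lambda)$. Given a homeomorphism $f$ of the orbifold $\mathcal{H}/\Lambda$ of the allowed type, lift it to a homeomorphism $\tilde f$ of the universal orbifold cover $\mathcal{H}$; such a lift satisfies $\tilde f\Lambda\tilde f^{-1}=\Lambda$, so conjugation $\lambda\mapsto\tilde f\lambda\tilde f^{-1}$ defines an automorphism $\alpha_{\tilde f}\in\mathrm{Aut}(\Lambda)$. Two lifts of $f$ differ by a deck transformation, i.e.\ by an element of $\Lambda$, so the class $[\alpha_{\tilde f}]\in\mathrm{Out}(\Lambda)$ is independent of the chosen lift; and an isotopy of $f$ lifts to an isotopy of $\tilde f$, along which the conjugation map is locally constant by discreteness of $\Lambda$, so $\Psi([f]):=[\alpha_{\tilde f}]$ depends only on the isotopy class of $f$. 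Functoriality of lifting makes $\Psi$ a homomorphism.

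Next I would check that $\Psi$ carries $\mathrm{PMod}(\mathcal{H}/\Lambda)$ into $\mathrm{Out}_0(\Lambda)$. If $f$ fixes every cone point and corner point, then $\tilde f$ maps the fixed point of each canonical elliptic generator $x_i$ (resp.\ of each product $c_{ij-1}c_{ij}$) to a point lying over the same cone (resp.\ corner) point, hence to a $\Lambda$-translate of it; consequently $\tilde f$ conjugates the stabiliser of that point to a conjugate of itself, so $\alpha_{\tilde f}$ preserves each conjugacy class of elliptic stabiliser. By the very definition of $\mathrm{Out}_0(\Lambda)$ this means $\Psi([f])\in\mathrm{Out}_0(\Lambda)$.

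The main work is the surjectivity of the restricted map $\mathrm{PMod}(\mathcal{H}/\Lambda)\to\mathrm{Out}_0(\Lambda)$, and this is where I expect the real obstacle to lie. Here I would invoke the geometric realisation theorem for automorphisms of NEC-groups: every automorphism of $\Lambda$ is induced by a self-homeomorphism of $\mathcal{H}/\Lambda$ (equivalently, the Dehn--Nielsen--Baer correspondence for $2$-orbifolds). Thus a given class $[\alpha]\in\mathrm{Out}_0(\Lambda)$ is realised by some homeomorphism $f$ with $\Psi([f])=[\alpha]$. Because $\alpha$ lies in $\mathrm{Out}_0(\Lambda)$ it fixes each conjugacy class of elliptic stabiliser; since these classes are in bijection with the cone and corner points (as noted just before the lemma), $f$ sends every cone point to itself and every corner point to itself, so after an isotopy $f$ may be taken to fix all of them, giving $[f]\in\mathrm{PMod}(\mathcal{H}/\Lambda)$. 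This proves surjectivity and hence $|\mathrm{Out}_0(\Lambda)|\le|\mathrm{PMod}(\mathcal{H}/\Lambda)|=n$. The delicate point throughout is the realisation step, which requires that automorphisms of $\Lambda$ respect its geometric structure (the partition of torsion elements into the three classes recalled in Section \ref{sec:preli}); I would lean on \cite{BCCS} and the cited orbifold results rather than reprove it, focusing the exposition instead on making the lifting construction and the bookkeeping of fixed cone and corner points explicit, as promised in the surrounding text.
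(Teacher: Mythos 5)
Your proposal is correct and takes essentially the same approach as the paper's source: the paper itself offers no proof of Lemma \ref{L:PMod_Out0} (it is quoted from \cite[Corollary 4.4]{BCCS}), and the argument behind that citation is precisely the one you give, namely the lifting homomorphism from the orbifold mapping class group to $\mathrm{Out}(\Lambda)$, whose restriction $\mathrm{PMod}({\mathcal H}/\Lambda)\to\mathrm{Out}_0(\Lambda)$ is surjective by the Macbeath--Singerman geometric realization theorem for NEC groups, forcing $|\mathrm{Out}_0(\Lambda)|\le n$. Treating the realization theorem as a citable black box, as you do, is exactly the level of rigour of the paper and its reference, so there is no gap.
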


\begin{figure}[hbtp]
\includegraphics[height=5cm]{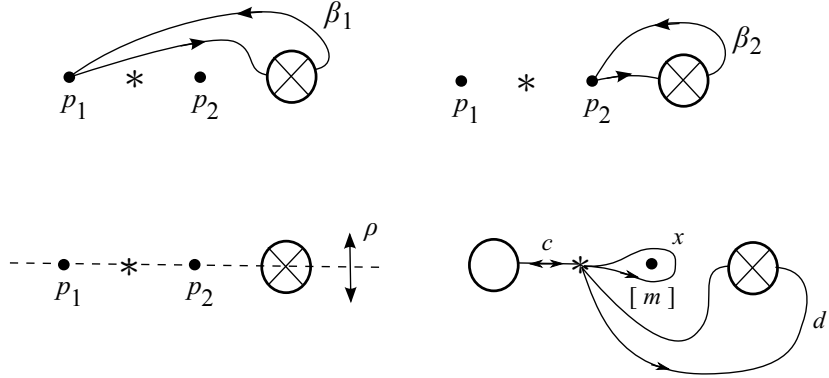}
\caption{$\otimes$ indicates the place to attach a M\"{o}bius band.
The loops $x$, $c$ and $d$ represent the generators of $\Lambda$
with signature $(1;-;[m];\{(\,)\})$ or the orbifold fundamental group
of ${\mathcal H}/\Lambda$ whose base point is *. }
\label{fig:N12}
\end{figure}

In order to obtain  presentations of these groups, we review  mapping
class groups of two elementary surfaces.
Let $S_{0,3}$ be the sphere with three marked points $p_1$, $p_2$ and $p_3$,
and $\mathrm{PMod}(S_{0,3})$ be the group of isotopy classes of orientation preserving
diffeomorphisms over the sphere preserving each of these three points.
It is well-known that $\mathrm{PMod}(S_{0,3})$ is trivial (see, for example,
the proof of Proposition 2.3 in \cite{FM}).
Let $N_{1,2}$ be the real projective plane with two marked points $p_1$ and $p_2$,
and $\mathrm{PMod}(N_{1,2})$ be the group of isotopy classes of
diffeomorphisms over the real projective plane preserving each of these two points.
Let $\beta_1$ and $\beta_2$ be the oriented circles shown in Figure \ref{fig:N12},
and $\nu_i$ be the element of $\mathrm{PMod}(N_{1,2})$ obtained by sliding
$p_i$ once along $\beta_i$ ($i = 1,2$).
Korkmaz \cite[Corollary 4.6]{K} showed that $\mathrm{PMod}(N_{1,2})$ is generated by $\nu_1$
and $\nu_2$, and $\nu_1^2 = \nu_2^2 = (\nu_2 \nu_1)^2 =1$.
Let $\rho$ be the reflection indicated in Figure \ref{fig:N12}.
By investigating the action on the fundamental group, we can see
$\rho = \nu_1 \nu_2$.
Therefore, we see that $\mathrm{PMod}(N_{1,2})$ is generated by $\nu_1$ and $\rho$,
and $\nu_1^2 = \rho^2 = (\rho \nu_1)^2 =1$.

\begin{lemma}\cite[Proposition 4.12]{BCCS}\label{mb1-lem}
Let $\Lambda$ be an NEC-group with signature $(1;-;[m];\{(\,)\})$
and canonical generators $x$, $d$, $c$, satisfying the relations
$x^m=c^2=1, d^2xc=cd^2x$.
Then $\mathrm{Out}(\Lambda)$ is isomorphic to the Klein four-group and
is generated by classes of automorphisms $\gamma$, $\delta$ defined by
$$
\;\;\;\;\;\;\;\;\;\;\gamma\colon\begin{cases}x\mapsto x^{-1}\\ d\mapsto x^{-1}d^{-1}x\\c\mapsto c\end{cases}\qquad
\delta\colon\begin{cases}x\mapsto x\\ d\mapsto (dx)^{-1}\\c\mapsto (dx)^{-1}c(dx)\end{cases}
$$
\end{lemma}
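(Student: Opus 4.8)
The plan is to sandwich $\mathrm{Out}(\Lambda)$ between an upper bound coming from a mapping class group and a lower bound obtained by exhibiting four explicit outer classes. First I would record the three-generator presentation: eliminating the connecting generator $e=e_1$ through the long relation $xed^2=1$ (so that $e=x^{-1}d^{-2}$) and rewriting $ece^{-1}=c$ turns the standard presentation of an NEC-group with signature $(1;-;[m];\{(\,)\})$ into $\langle x,d,c\mid x^m=c^2=1,\ d^2xc=cd^2x\rangle$, as in the statement. Abelianising makes the commutation relation vacuous, so $\Lambda^{\mathrm{ab}}\cong\Z_m\oplus\Z\oplus\Z_2$ with the three summands generated by the images of $x$, $d$, $c$; since inner automorphisms act trivially on $\Lambda^{\mathrm{ab}}$, this group is my device for separating outer classes.

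For the upper bound I would identify the quotient orbifold $\mathcal{H}/\Lambda$: its underlying surface has sign $-$, genus $1$ and one boundary component, hence is a M\"obius band carrying a single cone point of order $m$ and no corner points. Filling the boundary with a once-marked disc and regarding the cone point as the second marked point identifies the pure mapping class group of this orbifold with $\mathrm{PMod}(N_{1,2})$ (the generators $x,c,d$ of the orbifold fundamental group being exactly those of Figure \ref{fig:N12}), which was shown above to be the Klein four-group, of order $4$. Lemma \ref{L:PMod_Out0} then gives $|\mathrm{Out}_0(\Lambda)|\le 4$. Because $\Lambda$ has a single proper period $m$ and no link periods, there is exactly one conjugacy class of stabilisers of fixed points of elliptic elements, so every automorphism of $\Lambda$ fixes it; thus $\mathrm{Out}_0(\Lambda)=\mathrm{Out}(\Lambda)$ and $|\mathrm{Out}(\Lambda)|\le 4$.

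For the lower bound I would check by substitution that $\gamma$ and $\delta$ respect the three defining relations (using $d^2xc=cd^2x$, equivalently $x^{-1}d^{-2}c=cx^{-1}d^{-2}$, to verify the long relation), so that both are endomorphisms; the identities $\gamma^2=\mathrm{id}$ and $\delta^2=\text{conjugation by }x^{-1}$ then show they are in fact automorphisms and that $[\gamma]$, $[\delta]$ are involutions in $\mathrm{Out}(\Lambda)$. Writing the induced actions on $\Lambda^{\mathrm{ab}}$ additively and keeping $c$ fixed throughout, one gets $\gamma\colon(x,d)\mapsto(-x,-d)$, $\delta\colon(x,d)\mapsto(x,-x-d)$ and $\gamma\delta\colon(x,d)\mapsto(-x,x+d)$. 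These three, together with the identity, are pairwise distinct for every $m\ge 2$: for $m\ge 3$ the image of $x$ separates $\{1,\delta\}$ from $\{\gamma,\gamma\delta\}$ and the image of $d$ distinguishes the two elements within each pair, while for $m=2$ the four images $d,-d,-x-d,x+d$ of $d$ are already pairwise distinct in $\Z_2\oplus\Z$. Hence $[1],[\gamma],[\delta],[\gamma\delta]$ are four distinct outer classes, which by the upper bound exhaust $\mathrm{Out}(\Lambda)$; a group of order $4$ containing the two distinct involutions $[\gamma]\ne[\delta]$ is the Klein four-group, generated by $[\gamma]$ and $[\delta]$.

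The relation-checks and the abelianisation bookkeeping are routine. The genuinely delicate step, and the main obstacle, is the upper bound: one must correctly identify $\mathrm{PMod}(\mathcal{H}/\Lambda)$ with $\mathrm{PMod}(N_{1,2})$ so that Lemma \ref{L:PMod_Out0} applies, i.e.\ pass from the bordered orbifold to the marked surface $N_{1,2}$ in a way compatible with the correspondence between $\mathrm{Out}_0(\Lambda)$ and the orbifold mapping class group.
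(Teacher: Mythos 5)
Your proof is correct and follows essentially the same route as the paper's: the identification of $\mathrm{PMod}(\mathcal{H}/\Lambda)$ with $\mathrm{PMod}(N_{1,2})$ combined with Lemma \ref{L:PMod_Out0} yields the upper bound $|\mathrm{Out}(\Lambda)|\le 4$, and $\gamma,\delta$ supply the outer classes realizing it. The only minor difference lies in the lower bound: the paper obtains $\gamma$ and $\delta$ as the actions on $\Lambda$ of the mapping classes $\rho$ and $\nu_1$ (so they are automatically automorphisms) and simply asserts they are non-inner, whereas you verify the defining relations algebraically and separate the four classes $[1],[\gamma],[\delta],[\gamma\delta]$ through the abelianisation $\Lambda^{\mathrm{ab}}\cong\mathbb{Z}_m\oplus\mathbb{Z}\oplus\mathbb{Z}_2$---which makes fully explicit a point the paper leaves terse.
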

\begin{pf}Under the correspondence $p_1$ to the boundary, and $p_2$ to the cone point,
$\mathrm{PMod}(N_{1,2})$ is isomorphic to
$\mathrm{Mod}({\mathcal H}/\Lambda) = \mathrm{PMod}({\mathcal H}/\Lambda)$.
The action of $\rho$ on $\Lambda$ is $\gamma$ and that of $\nu_1$ is $\delta$
and these actions are of order 2 and not inner automorphisms of $\Lambda$.
By Lemma \ref{L:PMod_Out0}, the order of ${\rm Out}_0(\Lambda) = {\rm Out}(\Lambda)$
is at most 4.
Therefore, we see that ${\rm Out}(\Lambda)$ is the Klein four-group generated
by $\gamma$ and $\delta$.
\end{pf}

\begin{lemma}\label{ann1-lem}
Let $\Lambda$ be an NEC-group with signature $(0;+;[m];\{(\,),(\,)\})$
with canonical generators $x, e, c_1, c_2$ satisfying the following
defining relations: $x^m=c_1^2=c_2^2=1,\quad ec_1=c_1e,\quad
xec_2=c_2xe.$ Then $\mathrm{Out}(\Lambda)$ is isomorphic to the
Klein four-group and is generated by classes of automorphisms
$\alpha$, $\beta$ defined by
$$\alpha\colon\begin{cases}x\mapsto e^{-1}x^{-1}e\\ e\mapsto e^{-1}\\c_1\mapsto c_1\\c_2\mapsto c_2\end{cases}\qquad
\beta\colon\begin{cases}x\mapsto e^{-1}xe\\ e\mapsto (xe)^{-1}\\c_1\mapsto c_2\\c_2\mapsto c_1\end{cases}
$$
\end{lemma}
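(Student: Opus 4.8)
The plan is to follow the strategy used in the proof of Lemma \ref{mb1-lem}, with the projective plane $N_{1,2}$ replaced by the sphere $S_{0,3}$. First I would identify the orbifold $\mathcal{H}/\Lambda$: the signature $(0;+;[m];\{(\,),(\,)\})$ has as underlying surface an orientable genus-$0$ surface with two boundary components, i.e.\ an annulus, carrying a single interior cone point of order $m$ and no corner points (the once-punctured annulus of the introduction). Since there is exactly one cone point, any homeomorphism automatically preserves it, so $\mathrm{Mod}(\mathcal{H}/\Lambda)=\mathrm{PMod}(\mathcal{H}/\Lambda)$; and since the conjugacy classes of stabilisers of fixed points of elliptic elements reduce to the single class of $\langle x\rangle$, every automorphism lies in $\mathrm{Out}_0(\Lambda)$, whence $\mathrm{Out}_0(\Lambda)=\mathrm{Out}(\Lambda)$. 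Thus, by Lemma \ref{L:PMod_Out0}, it suffices to show that $\mathrm{PMod}(\mathcal{H}/\Lambda)$ has order $4$ and to realise $\alpha,\beta$ as the induced actions of its generators on $\Lambda$.

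To compute $\mathrm{PMod}(\mathcal{H}/\Lambda)$ I would cap each of the two boundary circles with a once-marked disc, turning the annulus with its cone point into $S_{0,3}$ with three marked points: the cone point $p_0$ and the two cap centres $p_1,p_2$. Because the boundary is free, Dehn twists about boundary-parallel curves become trivial, and the capping induces an isomorphism of $\mathrm{PMod}(\mathcal{H}/\Lambda)$ onto the subgroup of the extended mapping class group of $S_{0,3}$ that fixes $p_0$ (the cone point being of a different type than the cap centres) and may interchange $p_1$ and $p_2$. Using $\mathrm{PMod}(S_{0,3})=1$, this subgroup is generated by the order-$2$ boundary swap and by an orientation-reversing reflection fixing all three marked points, and so is the Klein four-group of order $4$. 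By Lemma \ref{L:PMod_Out0} this yields $|\mathrm{Out}(\Lambda)|=|\mathrm{Out}_0(\Lambda)|\le 4$.

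It remains to read off the two generators. Choosing loops $x,e,c_1,c_2$ based at a fixed point of the reflection, as in a figure analogous to Figure \ref{fig:N12}, I would verify that the orientation-reversing reflection, which fixes both boundary circles, induces $\alpha$, and that the boundary swap, which interchanges the two circles and hence $c_1\leftrightarrow c_2$, induces $\beta$. A direct substitution then shows that $\alpha$ and $\beta$ respect the defining relations $x^m=c_1^2=c_2^2=1$, $ec_1=c_1e$, $xec_2=c_2xe$, so they are indeed automorphisms, and that $\alpha^2=\beta^2=\mathrm{id}$. Finally, passing to the abelianisation $\Lambda^{\mathrm{ab}}\cong\Z_m\oplus\Z\oplus\Z_2\oplus\Z_2$ (generated by the images of $x,e,c_1,c_2$), one sees that $\alpha$ sends $e\mapsto e^{-1}$ with $e$ of infinite order, while $\beta$ interchanges the two independent involutions $c_1,c_2$; since an inner automorphism acts trivially on $\Lambda^{\mathrm{ab}}$, both $\alpha$ and $\beta$ (and $\alpha\beta$) are non-inner and pairwise distinct in $\mathrm{Out}(\Lambda)$. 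A group of order at most $4$ containing two distinct involutions is the Klein four-group, so $\mathrm{Out}(\Lambda)=\langle\alpha,\beta\rangle\cong\Z_2\times\Z_2$.

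The main obstacle is the computation in the third step: determining precisely how the geometric reflection and the boundary swap act on the chosen loops representing $x,e,c_1,c_2$, so as to match the algebraically prescribed $\alpha,\beta$ exactly rather than up to an inner modification. This needs a careful choice of base point and of the generating loops, together with bookkeeping of orientations through the capping construction; once this is settled, the remaining verifications (relations, $\alpha^2=\beta^2=\mathrm{id}$, and the abelianisation argument) are routine.
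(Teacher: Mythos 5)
Your proposal is correct and takes essentially the same route as the paper's proof: both reduce to $\mathrm{PMod}(S_{0,3})=1$ (the paper phrases your capping construction as "regarding $f$ as an element of $\mathrm{PMod}(S_{0,3})$"), both exhibit the same two geometric generators of $\mathrm{PMod}(\mathcal{H}/\Lambda)$ (an orientation-reversing reflection inducing $\alpha$ and the boundary swap, i.e.\ the $\pi$-rotation about the cone point, inducing $\beta$), and both invoke Lemma \ref{L:PMod_Out0} to get $|\mathrm{Out}(\Lambda)|=|\mathrm{Out}_0(\Lambda)|\le 4$. The only differences are elaborations: you make the capping map and its injectivity explicit, and you replace the paper's bare assertion that $\alpha$, $\beta$ are non-inner by an abelianisation argument, which is a valid and concrete way to finish.
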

\begin{figure}[hbtp]
\includegraphics[height=6.5cm]{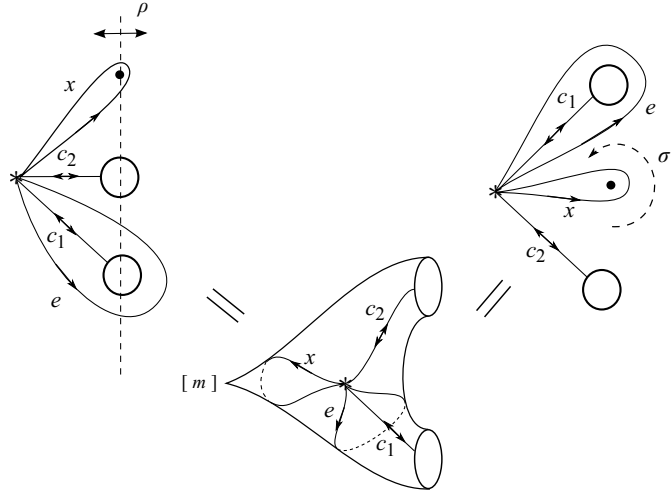}
\caption{
The loops $x$, $c_1$, $c_2$ and $e$ represent the generators of $\Lambda$
with signature $(0;+;[m];\{(\,),(\,)\})$ or the orbifold fundamental group
of ${\mathcal H}/\Lambda$ whose base point is *. }
\label{fig:S21}
\end{figure}
\begin{pf}Let $f$ be a homeomorphism over $\mathcal{H}/\Lambda$ fixing boundaries and
a {cone point},
then we can regard $f$ as an element of $\mathrm{PMod}(S_{0,3})=1$.
Therefore, every element of
$\mathrm{PMod}(\mathcal{H}/\Lambda) = \mathrm{Mod}(\mathcal{H}/\Lambda)$
is determined by its action on the boundary of $\mathcal{H}/\Lambda$.
Let $\rho$ be the reflection about the axis shown in Figure \ref{fig:S21}, and
$\sigma$ be the $\pi$-rotation about the cone point as shown in Figure \ref{fig:S21}.
$\mathrm{PMod}(\mathcal{H}/\Lambda)$ is generated by $\rho$ and $\sigma$, and
its defining relations are $\rho^2 = \sigma^2 = (\rho \sigma)^2 = 1$.
The action of $\rho$ on $\Lambda$ is $\alpha$ and that of $\sigma$ is
$\beta$ and these actions are not inner automorphisms of $\Lambda$.
By Lemma \ref{L:PMod_Out0}, the order of
${\rm Out}_0(\Lambda) = {\rm Out}(\Lambda)$ is at most $4$.
Therefore, we see that ${\rm Out}(\Lambda)$ is the Klein four-group
generated by $\alpha$ and $\beta$.
\end{pf}

\begin{lemma}\label{d2-lem}
Let $\Lambda$ be an NEC-group with a signature $(0;+;[m,n];\{(\,)\})$ and
generators $x_1$, $x_2$, $c$, satisfying the following defining relations:
$x_1^{m}=x_2^{n}=c^2=1,\quad x_1x_2c=cx_1x_2$.
Then if $m\ne n$ then
$\mathrm{Out}(\Lambda)$ has order $2$ and is generated by the class of automorphism $\alpha$ while if $m=n$
and the Klein four-group generated by $\alpha, \beta$ in the other case, where $$
\;\;\;\;\;\;\;\;\;\;\alpha\colon\begin{cases}x_1\mapsto x^{-1}_1\\ x_2\mapsto x_1x_2^{-1}x_1^{-1}\\c\mapsto c\end{cases}\qquad
\beta\colon\begin{cases}x_1\mapsto x_2\\ x_2\mapsto x_2^{-1}x_1x_2\\c\mapsto c\end{cases}
$$
\end{lemma}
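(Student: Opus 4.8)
The plan is to follow the route used in Lemmas~\ref{mb1-lem} and~\ref{ann1-lem}: the orbifold $\cH/\Lambda$ of signature $(0;+;[m,n];\{(\,)\})$ is a disc with two interior cone points $p_1,p_2$ of orders $m$ and $n$ (type $(7)$ of Lemma~\ref{big o-r}), with $x_1,x_2,c$ the loops around $p_1$, $p_2$ and the boundary shown in the figure. I would compute the mapping class group of this orbifold by capping it off to a thrice-marked sphere, read the generating automorphisms $\alpha,\beta$ off explicit homeomorphisms, and bound $\mathrm{Out}(\Lambda)$ from above by Lemma~\ref{L:PMod_Out0}.

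First I would determine $\PMcg(\cH/\Lambda)$, the group of isotopy classes of homeomorphisms fixing $p_1$ and $p_2$. Capping the boundary circle with a once-marked disc produces the sphere $S_{0,3}$ with marked points $p_1,p_2$ and the cap centre $p_3$. Any orientation-preserving homeomorphism fixing $p_1,p_2$ extends to an element of $\PMcg(S_{0,3})=1$, so it is isotopic to the identity and the class is determined by the behaviour on the boundary; the only non-trivial class is that of the orientation-reversing reflection $\rho$ fixing $p_1,p_2$. Hence $\PMcg(\cH/\Lambda)=\langle\rho\rangle\cong\Z_2$, and Lemma~\ref{L:PMod_Out0} yields $|\mathrm{Out}_0(\Lambda)|\le 2$.

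Next I would identify the generators. The automorphism induced by $\rho$ is $\alpha$; a direct check shows $\alpha$ respects the relations (crucially $\alpha(x_1x_2)=(x_1x_2)^{-1}$, which is compatible with $x_1x_2c=cx_1x_2$), that $\alpha^2=1$, and that $\alpha$ is not inner, so $\mathrm{Out}_0(\Lambda)=\langle\alpha\rangle\cong\Z_2$. If $m\ne n$, the two cone points have different orders and cannot be interchanged by any automorphism, whence $\mathrm{Out}(\Lambda)=\mathrm{Out}_0(\Lambda)=\langle\alpha\rangle$ has order $2$. If $m=n$, the quotient $\mathrm{Out}(\Lambda)/\mathrm{Out}_0(\Lambda)$ embeds in $\Sym_2$, hence has order at most $2$; the $\pi$-rotation $\sigma$ interchanging $p_1$ and $p_2$ induces $\beta$, which is a well-defined automorphism precisely because $m=n$ (one has $\beta(x_1x_2)=x_1x_2$) and which lies outside $\mathrm{Out}_0(\Lambda)$. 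Thus the quotient has order exactly $2$, so $|\mathrm{Out}(\Lambda)|=4$ and $\mathrm{Out}(\Lambda)=\langle\alpha,\beta\rangle$.

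The delicate step, and the one I expect to be the main obstacle, is confirming that in the case $m=n$ this group is the Klein four-group and not $\Z_4$. For this I would verify that $\beta^2$ equals conjugation by $(x_1x_2)^{-1}$, so that $\beta^2=1$ in $\mathrm{Out}(\Lambda)$, and that $\alpha\beta$ and $\beta\alpha$ differ by an inner automorphism, so that $\alpha$ and $\beta$ commute modulo inner automorphisms; together with $\alpha^2=1$ this forces $\mathrm{Out}(\Lambda)\cong\Z_2\times\Z_2$. The remaining checks --- that $\alpha$ and $\beta$ are genuine automorphisms preserving the orders of $x_1,x_2,c$ and the relation with $c$, and that none of $\alpha,\beta,\alpha\beta$ is inner --- are routine and pin the lower bound matching the upper bound from Lemma~\ref{L:PMod_Out0}.
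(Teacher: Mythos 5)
Your proposal is correct and takes essentially the same route as the paper's own proof: capping off to $S_{0,3}$ to see that $\mathrm{PMod}(\mathcal{H}/\Lambda)$ is generated by the reflection $\rho$ inducing $\alpha$, invoking Lemma \ref{L:PMod_Out0} for the upper bound on $\mathrm{Out}_0(\Lambda)$, and, when $m=n$, using the exact sequence $1\to\mathrm{Out}_0(\Lambda)\to\mathrm{Out}(\Lambda)\to\mathbb{Z}_2\to 1$ with the cone-point--swapping rotation (the paper's $\tau$, your $\sigma$) inducing $\beta$. Your explicit algebraic checks that $\beta^2$ is inner and that $[\alpha\beta]=[\beta\alpha]$ merely spell out what the paper asserts directly, namely the relations $\alpha^2=\beta^2=(\alpha\beta)^2=1$ in $\mathrm{Out}(\Lambda)$.
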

\begin{figure}[hbtp]
\includegraphics[height=5cm]{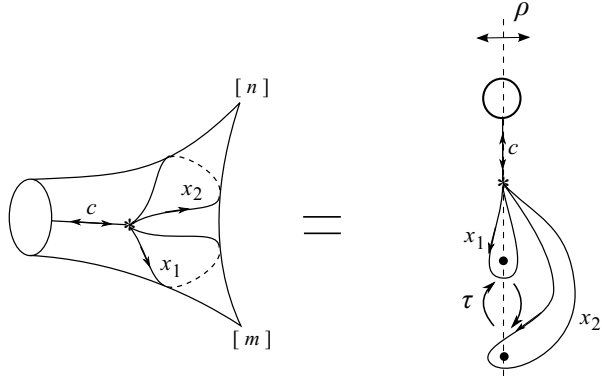}
\caption{
The loops $x_1$, $x_2$, $c$ are the generators of $\Lambda$
with signature $(0;+;[m,n];\{(\,)\})$ or the orbifold fundamental group
of ${\mathcal H}/\Lambda$ whose base point is *. }
\label{fig:S12}
\end{figure}
\begin{pf}Let $f$ be a homeomorphism over $\mathcal{H}/\Lambda$ fixing boundaries and a corner,
then we can regard $f$ as an element of $\mathrm{PMod}(S_{0,3})=1$.
Therefore, every element of $\mathrm{PMod}(\mathcal{H}/\Lambda)$ is determined
by its action on the boundary of $\mathcal{H}/\Lambda$.
Let $\rho$ be the reflection about the axis shown in Figure \ref{fig:S12}.
Then $\rho$ reverses the orientation of the boundary of $\mathcal{H}/\Lambda$.
$\mathrm{PMod}(\mathcal{H}/\Lambda)$ is generated by the involution $\rho$.
The action of $\rho$ on $\Lambda$ is $\alpha$ and is not inner automorphism of $\Lambda$.
By Lemma \ref{L:PMod_Out0}, the order of
${\rm Out}_0(\Lambda)$ is at most $2$.
Therefore ${\rm Out}_0(\Lambda)$ is generated by $\alpha$ and
${\rm Out}_0(\Lambda)$ is isomorphic to $\mathbb{Z}_2$.
If $n \not= m$, ${\rm Out}_0(\Lambda) = {\rm Out}(\Lambda)$.
If $n = m$, there is a short exact sequence
$1 \to {\rm Out}_0(\Lambda) \to {\rm Out}(\Lambda) \to \mathbb{Z}_2 \to 1$,
where $\mathbb{Z}_2$ is the group of permutation of cones and
is generated by $\tau$ in Figure \ref{fig:S12}.
The action of $\tau$ on $\Lambda$ is $\beta$.
We conclude that ${\rm Out}(\Lambda)$ is generated by $\alpha$ and $\beta$,
and its defining relations are $\alpha^2 = \beta^2 = (\alpha \beta)^2 = 1$.
\end{pf}

\section{Proofs of the main results}\label{sec:main}
In this section we proof the results stated in Section \ref{sec:statement}, that is we classify, up to topological conjugation, cyclic actions corresponding to the signatures given in Lemma \ref{big o-r}.

\subsection{Actions with a disc with 6 corner points as the quotient
orbifold.} This is the easiest case concerning an NEC-group
$\Lambda$ with the signature
$(0;+;[\;];\{(2,2,2,2,2,2)\})$
from Lemma \ref{big o-r}. We denote
the canonical reflections $c_{1i}$ simply by $c_i$ for
$i=0,1,\dots,6$. We have $e_1=1$ and $c_0=c_6$. It follows that
$\Lambda$ has the presentation
$$
\langle c_0,\ldots ,c_5 \,|\, c_0^2 = \ldots = c_5^2 =(c_0c_{1})^2= \ldots = (c_4c_5)^2=1 \rangle.
$$

\begin{proof}[Proof of Theorem $\ref{d6}$.]
By Lemma \ref{cycle perm} there is only one, up to equivalence, BSK-epimorphism
 $\theta : \Lambda \to \zz_N$, mapping $(c_0,c_1,c_2,c_3,c_4,c_5)$ on
$(N/2,0,N/2,0,N/2,0)$. In particular, we see that $N/2$ generates
$\zz_N$, hence $N=2$. By Lemma \ref{sign k} and the Hurwitz-Riemann formula, $S$ is a 3-holed sphere.
\end{proof}

\subsection{Actions with annulus with 2 corner points as the quotient orbifold.}
This case concerns an NEC-group $\Lambda$ with the signature
$(0;+;[\;];\{(\,),(2,2)\})$
from Lemma \ref{big o-r} which has the presentation
 {$$
\langle e_1,e_2,c_{10},c_{20},c_{21},c_{22}
\,|\,
e_1e_2= c_{ij}^2=(c_{20}c_{21})^2=(c_{21}c_{22})^2=1,
e_1c_{10}=c_{10}e_1, e_2c_{20}=c_{22}e_2 \rangle.
$$ }
\begin{proof}[Proof of Theorem $\ref{ann2}$]
Let $\theta : \Lambda \to \zz_N$ be a BSK-epimorphism and let $\theta (e_1)=a$.
By Lemma \ref{cycle perm} we may assume
$$\theta(e_1)=a,\ \theta(e_2)=-a,\ \theta (c_{20})= \theta (c_{22})=N/2,\ \theta (c_{21})=0\
{\rm and} \ \theta (c_{10})=0 \; {\rm or}\; N/2.$$ Since $a$ and
$N/2$ generate $\zz_N$, we have $(a,N/2)=1$ and it follows that the
order of $a$ is either $N$ or $N/2$, the latter being possible only
for odd $N/2$.

\smallskip
Suppose that the order of $a$ is $N$. Then after composing $\theta$
with a suitable automorphism of $\zz_N$ we can assume that
$\theta(e_1)=1$. By Lemma \ref{sign k} $S$ is non-orientable, since
$e_1^{N/2}c_{20}$ is a non-orientable word in $\Gamma=\ker\theta$,
and its number of boundary components is either $N/2$ if
$\theta(c_{10})\ne 0$, or $N/2+1$ if $\theta(c_{10})=0$. From the
 Hurwitz-Riemann formula we easily compute that the genus of $S$ is
respectively $2$ or $1$.
\smallskip
Now suppose that the order of $a$ is $N/2$ which is odd. As above,
after composing $\theta$ with a suitable automorphism of $\zz_N$ we
can assume that $\theta(e_1)=2$ and therefore again we obtain two
nonequivalent BSK-maps, which give rise to two topologically
non-conjugate actions. Observe however, that this time $S$ is
orientable by Lemma \ref{sign k}, and it has either $N/2$ or $N/2+2$
boundary components. By the Hurwitz-Riemann formula the genus of $S$
is respectively $1$ or $0$.
\end{proof}

\subsection{Actions with M\"obius band with 2 corner points as the quotient orbifold.}
This case concerns an NEC-group $\Lambda$ with signature
$(1;-;[\;];\{(2,2)\})$
from
Lemma \ref{big o-r}.
We denote the canonical reflections
$c_{1i}$ simply by $c_i$ for $i=0,1,2$. After ruling out the
redundant generator $e_1$, we can write a presentation for $\Lambda$ as
$$
\langle d, c_{0},c_{1},c_{2} \,|\,
c_{0}^2=c_{1}^2=c_{2}^2=(c_{0}c_{1})^2=(c_{1}c_{2})^2=1,
c_{0}d^2=d^2c_{2}\rangle.
$$

\begin{proof}[Proof of Theorem $\ref{mb2}$.]The proof is very similar to that of Theorem \ref{ann2} above.
By Lemma \ref{cycle perm} every BSK-map $\theta : \Lambda \to \zz_N$
is equivalent to one of the form
$$
\theta (c_{0})=\theta (c_{2})=N/2,\ \theta (c_{1})=0,\ \theta(d)=a,
$$
for some $a\in\zz_N$. By (b) of Lemma \ref{sign k}, $S$ has $N/2$
boundary components, and from the Hurwitz-Riemann formula we compute
that its genus if either $2$ if it is non-orientable, or $1$
otherwise. There are two cases, according to the order of $a$,
which is either $N$ or $N/2$. If $a$ has order $N$, then by
composing $\theta$ with a suitable automorphism of $\zz_N$ we can
assume that $\theta(d)=1$. Observe that here $N/2$ can be arbitrary,
but $S$ is non-orientable if and only if $N/2$ is even, since only
then $d^{N/2}c_0$ is a non-orientable word in $\ker \theta$. If the
order of $a$ is $N/2$ then it must be odd, and by composing $\theta$
with a suitable automorphism of $\zz_N$ we can assume that
$\theta(d)=2$. In such case $d^{N/2}$ is a non-orientanble word in
$\Gamma = \ker \theta$ and hence $S$ is non-orientable.
\end{proof}

\subsection{Actions with a 1-punctured disc with 2 corner points as the quotient orbifold.}
This is the case concerning an NEC-group $\Lambda$ with the
signature
$(0;+;[m];\{(2,2)\})$
from Lemma \ref{big o-r}.
We denote the
canonical reflections $c_{1i}$ simply by $c_i$ for $i=0,1,2$. After
ruling out the redundant generator $e_1$, we can rewrite the
presentation for $\Lambda$ as
$$
\langle x, c_{0},c_{1},c_{2}
\,|\,
x^m= c_{0}^2=c_{1}^2=c_{2}^2=(c_{0}c_{1})^2=(c_{1}c_{2})^2=1,
c_{0}x=xc_{2}\rangle
$$
\begin{proof}[Proof of Theorem $\ref{d12}$.] Let $\theta : \Lambda \to \zz_N$ be a BSK-epimorphism.
Then $\zz_N$ is generated by $\theta (x)$, which has order $m$, and
$\theta(c_i)$ for some canonical reflection $c_i$ which has order $2$. Thus
either $N=m$ if $m$ is even, or $N=2m$ if $m$ is odd. By Lemma
\ref{cycle perm}, we can assume that $\theta(c_0)=\theta(c_2)=N/2,\
\theta(c_1)=0$. If $m=N$ then after composing $\theta$ with a
suitable automorphism of $\zz_N$ we can assume that $\theta(x)=1$,
and hence the action is unique. By (a) of Lemma \ref{sign k}, $S$ is
non-orientable in this case, since $x^{N/2}c_0$ is a non-orientable
word in $\ker \theta$. Now assume that $N=2m$. Then after composing
$\theta$ with a suitable automorphism of $\zz_N$ we can assume that
$\theta(x)=2$, and hence also in this case the action is unique up
to topological conjugation. Observe that now $S$ is orientable, by
(a) of Lemma \ref{sign k}. Finally, by (b) of Lemma \ref{sign k},
$S$ has $N/2$ boundary components in both cases, and its genus can
be easily computed from the Hurwitz-Riemann formula.
\end{proof}

\subsection{Actions with a 1-punctured disc with 4 corner points as the quotient orbifold.}
This is the case concerning an NEC-group $\Lambda$ with the
signature
$(0;+;[m];\{(2,2,2,2)\})$
from Lemma \ref{big o-r}.
This case is very
similar to that from the previous section. Now $\Lambda$ has
 the presentation
$$
\langle x, c_{0}, \ldots , c_{4} \,|\, x^m=c_0^2= \ldots =c_4^2=
(c_{0}c_{1})^2= (c_{1}c_{2})^2=(c_{2}c_{3})^2= (c_{3}c_{4})^2=1,
c_{0}x=xc_{4}\rangle.
$$
The proof of Theorem \ref{d14} is almost identical as that of
Theorem \ref{d12}. We leave details to the reader.

\subsection{Actions with 1-punctured M\"obius band as the quotient orbifold.}
These actions correspond to an NEC-group $\Lambda$ with
signature
$(1;-;[m];\{(\,)\})$
 from  Lemma \ref{big o-r} which has the presentation
$$
\langle x, d, c, e \;|\; xed^2=x^m=c^2=1, ec=ce \rangle.
$$
We have $\mu(\Lambda)=(m-1)/m$.
\begin{proof}[Proof of Theorem $\ref{mb1ori}$.] Suppose
that $\theta\colon\Lambda\to\Z_N$ is a BSK-epimorphism, such that
$\Gamma=\ker\theta$ is an orientable bordered surface group with $k$
empty period cycles.
Since $\Gamma$ contains a reflection,  we have $\theta(c)=0$, and by (b) of Lemma \ref{sign k}, $\theta(e)$ has
order $n=N/k$. In particular, $k$ divides $N$. Note that $\theta(x)$
and $\theta(e)$ generate a subgroup of index at most $2$ of $\Z_N$.
Since $S$ is orientable, this has to be a proper subgroup
of $\mathbb{Z}_{N}$,
 for
otherwise $\Gamma$ would contain a non-orientable word of the form
$wd$, where $w$ is a word in $x$ and $e$ such that
$\theta(w)=-\theta(d)$. It follows that $N=2\mathrm{lcm}(m,n)$.

By multiplying $\theta$ by an element of $\Z_N^\ast$, we can assume
that
\begin{equation}\label{theta_a}
 \theta(x)= {N}/{m},\;\; \theta(e)= a{N}/{n}, \;\; \theta(d)=b,
\end{equation}
for some $a \in \Z_{n}^\ast$ and some $b$ for which
\begin{equation}\label{eq:2b}
{N}/{m} + a{N}/{n}+ 2b \equiv 0 \tmod{N}
\end{equation}
We  denote such BSK-map by $\theta_a$, bearing in mind that
the parameter $a$ does not always determine it uniquely. Indeed, we have
$$b=-N/2m-aN/2n+\varepsilon N/2=-n/t-am/t+\varepsilon N/2$$ for some $\varepsilon\in\Z_2$.

If $N/2$ is odd then for arbitrary $a\in\Z_n^\ast$ we have a unique
odd $b$, namely $\varepsilon=0$ if $a$ is even, and $\varepsilon=1$
if $a$ is odd. If $N/2$ is even but $t$ is odd, then $b$ is odd for
arbitrary $a\in\Z_n^\ast$ and $\varepsilon\in\Z_2$. Finally, suppose
that $t$ is even. Then $a$ must be odd since $(a,n)=1$, and $b$ is
odd if and only if $n/t$ and $m/t$ have opposite parity. But since
$n/t$ and $m/t$ are relatively prime, $b$ is odd if and only if
$mn/t^2=N/2t$ is even and $\varepsilon$ arbitrary.

Summarizing the above paragraph, we conclude that if $N/2$ is odd,
then $\theta_a(d)$ is uniquely determined by $a$, whereas if $N/2$
is even, then $\theta_a(d)$ is determined only modulo $N/2$.
However, the two different possibilities for $\theta_a(d)$ define
equivalent BSK-epimorphisms. Indeed, set $c=1+N/2$ and note that
$c\equiv 1 \tmod m$, $c\equiv 1 \tmod n$ and $c$ is odd, hence
$c\in\Z_N^\ast$. Furthermore, we have $c\theta_a(x)=N/m$,
$c\theta_a(e)=aN/n$ and $c\theta_a(d)=\theta_a(d)+N/2$, because
$\theta_a(d)$ is odd.

\smallskip
Now, we will determine the number of equivalence classes of
BSK-epimorphisms. For $\phi$ representing an element of ${\rm
Out}(\Lambda)$ and a BSK-epimorphism $\theta: \Lambda \to \Z_N$,
let $\theta^\phi= \theta\circ\phi$. Since $\theta$ is into an
abelian target, for the generators of ${\rm Out}(\Lambda)$ given in
Lemma \ref{mb1-lem} we have
$$
\theta^\gamma\colon\begin{cases}
x\mapsto -\theta(x)\\
e \mapsto -\theta(e)\\
 d\mapsto -\theta(d)\end{cases}\,
\theta^\delta\colon\begin{cases}
x\mapsto \theta(x)\\
e\mapsto -\theta(e)\\
 d\mapsto -\theta(d) - \theta(x)\end{cases}
$$
In particular, for every $\phi$ we have $\theta^\phi(x)=\pm\theta(x)$ and $\theta^\phi(e)=\pm\theta(e)$.

We claim that $\theta_a$ and $\theta_{a'}$ are equivalent if and
only if $a\equiv\pm a'\tmod t$. For suppose that
$\theta_{a'}=c\theta_a^\phi$ for some $c\in\Z_N^\ast$ and
$\phi\in{\rm Aut}(\Lambda)$. Then, by replacing $c$ by $-c$ if
necessary, we may assume that $\theta_{a'}(x)=c\theta_a(x)$ and
$\theta_{a'}(e)=\pm c\theta_a(e)$, which gives $c\equiv 1\tmod m$
and $a'\equiv\pm ca\tmod n$. It follows that $a\equiv \pm a'\tmod
t$. Conversely, suppose that $a\equiv \pm a'\tmod t$. Then by Lemma
\ref{CRT} there exists $c$, such that $c\equiv 1\tmod m$ and
$c\equiv\pm a'a^{-1}\tmod n$, where $a^{-1}$ denotes the inverse of
$a$ modulo $n$. Note that such $c$ is relatively prime to $m$ and
$n$, and hence to $N/2$. If $N/2$ is odd, then we can take $c$ to be
odd as well, so that $c\in\Z_N^\ast$. Now
$\theta_{a'}(x)=c\theta_a^\phi(x)$ and
$\theta_{a'}(e)=c\theta_a^\phi(e)$, where $\phi=\mathrm{id}$ or
$\phi=\delta$. If $N/2$ is odd, then necessarily also
$\theta_{a'}(d)=c\theta_a^\phi(d)$, whereas if $N/2$ is even, than
possibly $c\theta_a^\phi(d)=\theta_{a'}(d)+N/2$, in which case it
suffices to replace $c$ by $c+N/2$.

Summarizing, on one hand each element of $\Z_t^\ast$ is the
residue mod $t$ of some $a\in \Z_{n}^\ast$ defining BSK-map
$\theta_a$ by Lemma \ref{DT}. On the other hand $\theta_a$ and
$\theta_{a'}$ are equivalent if and only if $a \equiv \pm a'
\tmod{t}$. So the elements of the quotient group $\Z_t^\ast/\{\pm
1\}$ parametrise the equivalence classes of BSK-maps (although it might happen that for a particular representative $x \in \Z_t^\ast$, $\theta_{x}$ is not a BSK-epimorphism, because $x\notin\zz_n^\ast$). Therefore, {we have $ \varphi(t)/2
$ classes for $t>1$ and $1$ class for $t=1$}.
\end{proof}

\begin{proof}[Proof of Theorem $\ref{mb1nonori}$.] Suppose that $\theta\colon\Lambda\to\Z_N$ is a BSK-epimorphism, such that
$\Gamma=\ker\theta$ is a non-orientable bordered surface group with
$k$ empty period cycles. As in the proof of the previous
theorem, we have $\theta(c)=0$ and $\theta(e)$ has order
$n=N/k$. Since $\Gamma$ contains a non-orientable word, $\theta(d)$
is equal to $\theta(w)$ for some word $w$ in $x$ and {$e$}. It follows
that $\Z_N$ is generated by $\theta(x)$ and $\theta(e)$, hence
$N=\mathrm{lcm}(m,n)$. By multiplying $\theta$ by an element of
$\Z_N^\ast$, we can assume that $\theta= \theta_a$ defined by
(\ref{theta_a})
 for some $a \in \Z_{n}^\ast$ and some $b$ for which
(\ref{eq:2b}) is satisfied.

\s Now, if $N$ is odd then for arbitrary $a\in\Z_n^\ast$ we have a
unique $b$ satisfying \eqref{eq:2b}.
 By the
same argument as in the previous proof, $\theta_a$ is equivalent to
$\theta_{a'}$ if and only if $a\equiv \pm a'\tmod t$, and hence
there are $\varphi(t)/2$ equivalence classes of BSK-epimorphisms if
$t>1$, and one such class if $t=1$.

\smallskip
For the rest of the proof assume that $N$ is even.
By \eqref{eq:2b},
$N/m+ a N/n=n/t+am/t$ is even, which is possible if and only if
$n/t$ and $m/t$ are both odd, hence $nm/t^2=N/t$ is odd. Now
$\theta_a(d)$ is determined by $a$ only modulo ${N}/{2}$:
$$
b=-\frac{1}{2}\left(\frac{N}{m}+a\frac{N}{n}\right) + \varepsilon\frac{N}{2}
$$
for some $\varepsilon \in \Z_2$.
We claim
that given $a,a' \in \Z_n^\ast$, $\theta_a$ and $\theta_{a'}$ are
equivalent if and only if either

\begin{itemize}
\item[(1)] $a\equiv a'\tmod t$ and $\theta_{a'}(d)=c\theta_a(d)$,
where $c$ is the unique element of $\Z_N^\ast$ satisfying $c\equiv
1\tmod {m}$ and $ca\equiv a'\tmod {n}$, or
\item[(2)] $a\equiv -a'\tmod t$ and $\theta_{a'}(d)=c\big(\theta_a(d)+\theta_a(x)\big)$, where $c$ is the unique
element of $\Z_N^\ast$ satisfying $c\equiv -1\tmod {m}$ and
$ca\equiv a'\tmod {n}$.
\end{itemize}

To prove the claim suppose that $\theta_{a'}=c\theta_a^\phi$ for
some $c\in\Z_N^\ast$ and $\phi\in\mathrm{Aut}(\Lambda)$. By Lemma
\ref{mb1-lem},  we may suppose that
$\phi\in\{1,\gamma,\delta,\delta\gamma\}$. If $\phi=1$ or
$\phi=\gamma$, then after replacing $c$ by $-c$ in the latter case,
we have $\theta_{a'}(x)=c\theta_a(x)$, $\theta_{a'}(e)=c\theta_a(e)$
and $\theta_{a'}(d)=c\theta_a(d)$. Thus $c$ satisfies $c\equiv
1\tmod {m}$ and $ca\equiv a'\tmod {n}$. By Lemma \ref{CRT}, such
(unique) $c$ exists if and only if $a \equiv a' \tmod t$. Similarly,
if $\phi=\delta$ or $\phi=\delta\gamma$, then after replacing $c$ by
$-c$ in the former case, we have $\theta_{a'}(x)=-c\theta_a(x)$,
$\theta_{a'}(e)=c\theta_a(e)$ and
$\theta_{a'}(d)=c\big(\theta_a(d)+\theta_a(x)\big)$. Such (unique)
$c$ again exists if and only if $a\equiv -a'\tmod t$. This completes
the proof of the claim.

\smallskip
Suppose $t>2$. It follows from the previous paragraph that
there is a surjection $\pi$ from the set of equivalence classes of
BSK-maps onto $\Z_t^\ast/\{\pm 1\}$, defined by
 $\pi([\theta_a])=[[a]_t]$, where $a \in \Z_n^\ast$.
We claim that $\pi$ is a 2-over-one map.
For let $\theta_a$ be a
BSK-map defined by (\ref{theta_a}) and define $\theta'_a$ by
$$
\theta'_a(x)={N}/{m},\;\; \theta'_a(e)=a{N}/{n}, \;\;
\theta'_a(d)=b+{N}/{2}.
$$
Evidently $\pi(\theta_a)=\pi(\theta'_a)$, but $\theta_a$ is not equivalent to $\theta'_a$. For if they were equivalent, then (1) would be satisfied with $c=1$, hence $b=b+N/2$. Now if $\pi(\theta_{a'})=\pi(\theta_a)$ for some $a'\in\zz_n^\ast$, then $\theta_{a'}$ is equivalent either to $\theta_a$ or to $\theta'_a$, by (1) if $a'\equiv a\tmod{t}$, or by (2) if $a'\equiv-a\tmod{t}$.

\smallskip
Finally, suppose $t=2$. By (1) every BSK map is equivalent to
$\theta\colon\Lambda\to\Z_N$ such that $\theta(x)={N}/{m}$ and
$\theta(e)={N}/{n}$. Fix such $\theta$ and define $\theta'$ by
$\theta'(x)=\theta(x)$, $\theta'(e)=\theta(e)$, and
$\theta'(d)=\theta(d)+{N}/{2}$. We have to show that $\theta$ and
$\theta'$ are equivalent. Let $c$ be the unique element of
$\Z_N^\ast$ such that $c\equiv -1\tmod {m}$ and $c\equiv 1\tmod
{n}$. By (2) it suffices to show that
$\theta'(d)=c\big(\theta(d)+\theta(x)\big)$. We have
$$2c\theta(d)=-c\big(\theta(x)+\theta(e)\big)=\theta(x)-\theta(e)=2\big(\theta(x)+\theta(d)\big)$$
Either $c\theta(d)=\theta(d)+\theta(x)$ or
$c\theta(d)=\theta(d)+\theta(x)+{N}/{2}$. The former equality is not
possible, because $\theta(x)={N}/{m}$ is odd and $\theta(d)(c-1)$ is
even. Hence
$$c\big(\theta(d)+\theta(x)\big)=c\theta(d)-\theta(x)=\theta(d)+{N}/{2}=\theta'(d)$$
It follows that all BSK-maps $\Lambda\to\Z_N$ are equivalent.
\end{proof}

\subsection{Actions with a $2$-punctured disc as the quotient orbifold.}
This case concerns an NEC-group $\Lambda$ with  signature
$(0;+;[m,n];\{(\,)\})$
 from Lemma \ref{big o-r} which has the presentation
 $$
 \langle
 x_1,x_2, c,e \; | \; x_1^m=x_2^n=c^2=x_1x_2e=1, ec=ce \rangle.
 $$
We have $\mu(\Lambda)=1-1/m-1/n$.

\begin{proof}[Proof of Theorem $\ref{d21}$.] Suppose that $\theta\colon\Lambda\to\zz_N$ is a BSK-map. Since
$\ker\theta$ contains a reflection, $\theta(c)=0$ and it follows by
(a) of Lemma \ref{sign k} that $S$ is orientable. By (b) of Lemma
\ref{sign k} we have $k=N/l$, where $l$ is the order of $\theta(e)$.
Since $\theta$ is a surjection, $\zz_N$ is generated by
$\theta(x_1)$, $\theta(x_2)$ which have orders $m$ and $n$
respectively and $\theta(e)= -\big(\theta(x_1)+\theta(x_2)\big)$ has
order $l$. It follows that the conditions (i) and (ii) of Lemma
\ref{lem-Harv} are satisfied, in particular $N=\mathrm{lcm}(m,n)$.
Let $(A,A_1,A_2,A_3)$ be the Maclachlan decomposition of $(m,n,l)$,
as above. We have $t=AA_3$, $A_1=n/t$, $A_2=m/t$ and $A_3=k$. We see
that $k$ divides $t$, and because $A_3$ is relatively prime to
$A_1A_2$, $k$ is relatively prime to $nm/t^2=N/t$. It follows that
$k$ divides $ {t}/{(t,N/t)}$. Finally, if $N$ is even, then by (ii)
of Lemma \ref{lem-Harv}, one of the numbers $k$, $n/t$, $m/t$ must
be even. It follows that $k$ must be even if $N/t=nm/t^2$ is odd.
Conversely, having $k,n,m,N$ satisfying the conditions of the theorem, one can easily define, using Lemma \ref{lem-Harv}, an appropriate BSK-map defining a surface and an action in question.

\smallskip
Every BSK-map is equivalent to $\theta_a\colon\Lambda\to\zz_N$ defined by
$$
\theta_a(x_1)=A_1,\quad
\theta_a(x_2)=aA_2,\quad \theta_a(e)=-(A_1+aA_2),\quad\theta_a(c)=0
$$
for some $a\in\mathcal{L}$, where
$$
\mathcal{L}=\{a\in\zz_n^\ast\;|\;\textrm{$A_1+aA_2$\ has\ order\ $l$}\}.
$$
Suppose first that $m\ne
n$. Let $S=\{c\in\zz_N^\ast\;|\;c\equiv 1\tmod{m}\}$. For
$a,b\in\mathcal{L}$, we claim that $\theta_a$ and $\theta_b$ are
equivalent if and only if $b\equiv ca\tmod{n}$ for some $c\in S$.
Indeed, suppose that $\theta_b=c\theta_a^\phi$ for some
$\phi\in\mathrm{Aut}(\Lambda)$ and $c\in\zz_N^\ast$. By Lemma
\ref{d2-lem}, either $\theta_a(\phi(x_i))=\theta_a(x_i)$ for
$i=1,2$, or $\theta_a(\phi(x_i))=-\theta_a(x_i)$ for $i=1,2$. By
changing $c$ to $-c$ in the latter case, we have $A_1=cA_1$ and
$bA_2=caA_2$, and the claim follows. Thus, the equivalence classes
of BSK-maps are parametrised by the orbits of the action of $S$ on
$\mathcal{L}$. Since this action is free, the number of orbits is
$|\mathcal{L}|/|S|$. Let $B=A/C=t/{kC}$ and write $B=B_1B_2B_3$,
where for $i=1,2,3$ each prime dividing $B_i$ divides $A_i$. By
\cite[Theorem 3.4]{BCGH} we have
$$|\mathcal{L}|=\varphi(A_1B)\psi(C)=\varphi(A_1B_1)\varphi(B_2)\varphi(B_3)\psi(C).$$
We also have
\begin{align*}
&|S|=\frac{\varphi(N)}{\varphi(m)}=\frac{\varphi(A_1B_1)\varphi(A_2B_2)\varphi(A_3B_3)\varphi(C)}
{\varphi(B_1)\varphi(A_2B_2)\varphi(A_3B_3)\varphi(C)}=\frac{\varphi(A_1B_1)}{\varphi(B_1)}\\
&|\mathcal{L}|/|S|=\varphi(B_1)\varphi(B_2)\varphi(B_3)\psi(C)=\varphi(B)\psi(C).
\end{align*}
This completes the proof in the case $m\ne n$.

\smallskip
Now suppose that $m=n$. {This} common value is equal to $N$ and
we have $A_1=A_2=1$ and
$$
\mathcal{L}=\{a\in\zz_N^\ast\;|\;\textrm{$1+a$\ has\ order\ $l$}\}.
$$
Now $\theta_a(\beta(x_1))=\theta_a(x_2)$ and
$\theta_a(\beta(x_2))=\theta_a(x_1)$ for $\beta
\in\mathrm{Aut}(\Lambda)$ from Lemma \ref{d2-lem}. Consequently,
$\theta_a$ and $\theta_b$ are equivalent if and only if either $a=b$
or $ab=1$. It follows that the number of equivalence classes of
BSK-maps is $(|\mathcal{L}|+I)/2$, where $I$ is the number of
$a\in\mathcal{L}$ for which $a^2=1$
 As in the case $m\ne n$, we have
$|\mathcal{L}|=\varphi(B)\psi(C)$, where $C$ is the biggest divisor
of $l$ coprime with $k$, and $B=l/C$.

\smallskip
In order to compute $I$, suppose that $a^2=1$ for some
$a\in\mathcal{L}$. We have $N=kBC$, and since $kB$ and $C$ are
coprime, $\zz_N \cong \zz_{kB}\oplus\zz_C$. Under this isomorphism, we
write $a=(a_1,a_2)$, where $a_1\in\zz_{kB}^\ast$ and
$a_2\in\zz_C^\ast$. We have $a_1^2\equiv 1\tmod{kB}$ and
$a_2^2\equiv 1\tmod{C}$. Since $1+a$ has order $l$, we have
$1+a_1=ks$ for some $s\in\zz_B^\ast$ and $1+a_2\in\zz_C^\ast$.

\smallskip
In suitable rings we have
$$0=1-a_i^2=(1+a_i)(1-a_i).$$
Since $(1+a_2)$ is invertible in $\zz_C$, $a_2=1$.
In $\zz_{kB}$ we have
$$0=(1+a_1)(1-a_1)=ks(2-ks).$$
Since $s$ is invertible, it follows that $B$ divides $2-ks$, hence
$(B,k)\le 2$. Observe that every prime divisor of $B$ divides $k$,
hence also $(B,k)$. It follows that $B$ is a power
of $2$. If $B\le 2$ then $s=1$ and $a=(a_1,a_2)=(k-1,1)$.
If $B=2^z$ for
$z>1$, then $k/2$ is coprime to $B/2$. Let $k'\in\zz_{B/2}^\ast$
denote the inverse of $k/2$. Then, since $B/2$ divides $1-sk/2$, we
have $s\equiv k'\tmod{B/2}$, and hence $s=k'$ or $s=k'+B/2$.
Summarising, we have
$$
I=\begin{cases}
2&\textrm{for\ } B=2^z,\ z>1\\
1&\textrm{for\ } B\le 2,\\
0&\textrm{otherwise.}
\end{cases}
$$
To finish the proof, observe that since $\psi(C)$ is odd, and
$\varphi(B)$ is even if and only if $B>2$, we have
$I\equiv\varphi(B)\psi(C)\pmod{2}$. It follows that
$(|\mathcal{L}|+I)/2=\lceil|\mathcal{L}|/2\rceil$ if $I\ne 2$.
\end{proof}

\subsection{Actions with a 1-punctured annulus as the quotient orbifold.}
This case concerns an NEC-group $\Lambda$ with the signature
$(0;+;[m];\{(\,),(\,)\})$
from
 Lemma \ref{big o-r} which has the presentation
$$\langle x,e_1,e_2,c_1,c_2 \; | \;
xe_1e_2=x^m=c_1^2=c_2^2=1, e_1c_1=c_1e_1, e_2c_2=c_2e_2
\rangle.
$$
We have $\mu(\Lambda)=(m-1)/m$.

\begin{proof}[Proof of Theorem $\ref{ann1-nonori}$.] Suppose that $\theta\colon\Lambda\to\zz_N$ is a BSK-map, such that
$\Gamma= \ker\theta$ is a non-orientable bordered surface group.
Then, since $\Gamma$ is bordered, some of the canonical
reflections, say $c_1$ belongs to $\Gamma$. Furthermore, by (b) of
Lemma \ref{sign k}, the order of $\theta(e_1)$ is $N/k$. But since
$\Gamma$ is non-orientable, it contains a non-orientable word, by
(a) of Lemma \ref{sign k}, which is possible if and only if
$\theta(c_2)=N/2$ and $N/2$ is in the subgroup of $\zz_N$ generated
by $\theta(x)$ and $\theta(e_1)$, and thus we obtain the
condition $N=\mathrm{lcm}(m,N/k)$.
Conversely, if the last condition is satisfied, then for
$a\in\zz_{N/k}^\ast$ we can define a BSK-map
$$\theta_a(x)={N}/{m},\;\theta_a(e_1)=ak,\;\theta_a(e_2)=-\big({N}/{m}+ak\big),\;\theta_a(c_1)=0,\;\theta_a(c_2)={N}/{2},$$
and every BSK-map is equivalent to some $\theta_a$. Let
$a,a'\in\zz_{N/k}^\ast$ and suppose that
$\theta_{a'}=c\theta_a\phi$ for some $c\in\zz_N^\ast$
and some $\phi\in\mathrm{Aut}(\Lambda)$. By Lemma \ref{ann1-lem},
$\theta_a\phi$ maps $(x,e_1)$ on $\pm (N/m,ak)$ and so by replacing
$c$ by $-c$ if necessary in the latter case, we obtain that
$$\theta_{a'}(x)=c{N}/{m},\;\theta_{a'}(e_1)=cak,\;\theta_a(e_2)=-c\big({N}/{m}+ak\big),\;\theta_a(c_1)=0,\;\theta_a(c_2)={N}/{2},$$
which give $c\equiv 1\tmod m$ and $a'\equiv ca\tmod{N/k}$. As in
the proof of Theorem \ref{mb1ori}, we conclude that
$\theta_a$ and $\theta_{a'}$ are equivalent if and only if $a'\equiv
a\tmod t$, and hence, the number of equivalence classes of such
BSK-maps is $\varphi(t)$.
\end{proof}

As we already mentioned in Section 2, the case of orientable $S$ is much more involved.

\begin{proof}[Proof of Theorem $\ref{ann1}$.]
Suppose that an action exists and let
$\theta\colon\Lambda\to\zz_N$ be the corresponding BSK-map. Since
$S$ is bordered, $\ker\theta$ contains at least one canonical
reflection, and we can assume $\theta(c_1)=0$. We consider two
cases: (1) $\theta(c_2)\ne 0$ and (2) $\theta(c_2)=0$.

\smallskip \noindent {\bf Case 1:} $\theta(c_2)\ne 0$. By (b) of
Lemma \ref{sign k}, $\theta(e_1)$ has order $N/k$, in particular
$k|N$. Since $S$ is orientable, $\theta(x)$ and $\theta(e_1)$
generate the subgroup of index 2 of $\zz_N$, by (a) of Lemma
\ref{sign k}. Hence $N=2\textrm{lcm}(m,N/k)$, and $\theta(c_2)=N/2$
is odd. Conversely, if (1) is satisfied, then for each
$a\in\zz_{N/k}^\ast$ we can define a BSK-map
$\theta^{1}_a\colon\Lambda\to\zz_N$ by
$$\theta^{1}_a(x)=N/m,\;\theta^{1}_a(e_1)=ak,\;\theta^{1}_a(e_2)=-(N/m+ak),
\;\theta^{1}_a(c_1)=0,\;\theta^{1}_a(c_2)=N/2.$$

\smallskip
\noindent {\bf Case 2:}
$\theta(c_2)= 0$.
Let $l_i$ denote the
order of $\theta(e_i)$ and set
$n_i=N/l_i$ for $i=1,2$. Then, by (b)
of Lemma \ref{sign k} we have $n_1+n_2=k$. Now $\theta(x)$,
$\theta(e_1)$ and $\theta(e_2)$ generate $\zz_N$ and hence the
triple $(l_1,l_2,m)$ satisfies the conditions of Lemma
\ref{lem-Harv}. Consider the Maclachlan decomposition of
$(l_1,l_2,m)$
$$
l_1=AA_2A_3,\ l_2=AA_1A_3,\ m=AA_1A_2.
$$
We have $A_1=n_1=n$, $A_2=n_2$, $A_3=N/m$ and the conditions
(a),(b),(c) follow from the properties of the Maclachlan
decomposition. Conversely, if (2) is satisfied, then by Lemma
\ref{lem-Harv}, $\zz_N$ is generated by three elements $a$, $b$ and
$c$ of orders $m$, $l_1$ and $l_2$ respectively such that $a+b+c=0$.
We define $\theta^{2}\colon\Lambda\to\zz_N$ by
$$
\theta^{2}(x)=a,\;\theta^{2}(e_1)=b,\;\theta^{2}(e_2)=c,\;\theta^{2}(c_1)=\theta^{2}(c_2)=0.$$
This completes the proof of assertion (i).

Now we shall find the number of conjugacy classes of actions. The
proof of assertion (ii) is analogous to that of Theorem
\ref{ann1-nonori} and we omit it.

To prove (iii), consider the Maclachlan decomposition
$(A,A_1,A_2,A_3)$ of the triple $(N/n,N/(k-n),m)$. We have $A_1=n$,
$A_2=k-n$, $A_3=N/m$ and $A=m/(n(k-n))$.
Every BSK-map is equivalent to $\theta^{2}_a\colon\Lambda\to\zz_N$
 defined by
$$\theta^{2}_a(e_1)=A_1,\;\theta_a^{2}(e_2)=aA_2,\;\theta^{2}_a(x)=-(A_1+aA_2),
\;\theta^{2}_a(c_1)=\theta^{2}_a(c_2)=0.$$
for some $a\in\mathcal{L}$, where
$$\mathcal{L}=\{a\in\zz_{N/(k-n)}^\ast\;|\;\textrm{$A_1+aA_2$\ has\ order\ $m$}\}.$$
It follows form Lemma
\ref{ann1-lem}, that for $a,a'\in\mathcal{L}$, $\theta_a^{2}$ is
equivalent to $\theta_{a'}^{2}$ if and only if $a'=ca$ for some
$c\equiv 1\tmod{N/n}$, or $aa'=1$, the latter being possible only
for $n=n-k=1$. Now, the formulas for the number of equivalence
classes of BSK-maps can be obtained by repeating the calculations
from the proof of Theorem \ref{d21}.
The assertion (iv) is evident.
\end{proof}

Theorem \ref{ann1} has some delicate subtlety which we
illustrate with two remarks and two examples.

\begin{rk}
\rm For some triples $(N,m,k)$ both conditions (1) and (2) are
satisfied, as for instance in Example \ref{ann1-ex1} below. In such
a case, $\theta^{1}$ and $\theta^{2}$ are not equivalent. For
suppose that $\theta^{1}=c\theta^{2}\phi$ for some
$c\in\zz_N^\ast$ and some $\phi\in\mathrm{Aut}(\Lambda)$. By Lemma
\ref{ann1-lem}, $\phi$ preserves $\{c_1,c_2\}$, hence
$\theta^1(c_i)=c\theta^{2}\phi(c_i)=0$ for $i=1,2$. This is a contradiction, because $\theta^1(c_1)\ne\theta^1(c_2)$ .
\end{rk}

\begin{rk}
\rm Suppose that $N, m, k$ satisfy the condition (2) , and let
$\theta_1^2$ and $\theta_2^2$ be BSK-maps, where $\theta_1^2(e_1)$,
$\theta_1^2(e_2)$ have orders $N/n_1$, $N/n_2$, where $n_1+n_2=k$
and $\theta_2^2(e_1)$, and $\theta_2^2(e_2)$ have orders $N/n'_1$,
$N/n'_2$, where $n'_1+n'_2=k$. If $\{n_1, n_2\}\ne\{n'_1, n'_2\}$
then $\theta_1^2$ and $\theta_2^2$ are not equivalent. This follows
from Lemma \ref{ann1-lem}, because for every
$\phi\in\mathrm{Aut}(\Lambda)$, $\theta_1^2\phi$ maps $(e_1,e_2)$
 on $\pm \big(\theta_1^2(e_1), \theta_1^2(e_2)\big).$
\end{rk}

\begin{ex}\label{ann1-ex1}
Suppose $k=2$, $2m\mid N$ and $N/2$ is odd. Then both conditions (1)
and (2) are satisfied ($n=1$ in (2)). The number of BSK-maps of
type (1) is $\varphi(m)$ by the assertion (ii), and the number of BSK-maps of type (2) is
$\left\lceil\varphi(m/C)\psi(C)/2\right\rceil$, where $C$ is the
biggest divisor of $m$ coprime to $N/m$, by the assertion (iii). By
adding up these two numbers we obtain the total number of
topological types of $\zz_N$-action on $S$, with the prescribed
quotient orbifold. By the Hurwitz-Riemann formula, the genus of $S$
is $N(m-1)/{2m}$.
\end{ex}

\begin{ex}\label{ann1-ex2}
Consider $m=N=12$ and $k=7$. Then (1) is not satisfied, but (2) is
by two different pairs $\{n,k-n\}$, namely $\{1,6\}$ and $\{3,4\}$.
By the assertion (iii) of Proposition \ref{ann1}, for each of these pairs, the
corresponding BSK-map is unique up to equivalence. Thus we have two
different topological types of $\zz_{12}$-action on $S$, with the
prescribed quotient orbifold. By the Hurwitz-Riemann formula, the
genus of $S$ is $3$.
\end{ex}

\subsection{Actions with a 3-punctured disc as the quotient orbifold.}
This subsection concerns NEC-groups $\Lambda$ with the signatures
$$(0;+;[2,3,m];\{(\,)\}) \; {\rm for}\;   m=3,4,5 \;\; {\rm and} \;\; (0;+;[2,2,m];\{(\,)\})$$
from Lemma \ref{big o-r} which have the presentation
$$
\langle x_1,x_2,x_3, e, c \,|\, x_1^2=x_2^n=x_3^m= x_1x_2x_3 e=
c^2=1, ce=ec \rangle,
$$
where $n=2$ or $n=3$.

\begin{proof}[Proof of Theorem $\ref{d31}$.] Suppose
that $\theta\colon\Lambda\to\zz_N$ is a BSK-map, such that
$\ker\theta$ is a bordered surface group. Then $\theta(c)=0$,
$\theta(x_1)=\theta(x_2)=N/2$, and by multiplying $\theta$ by an
element of $\zz_N^\ast$, we may assume $\theta(x_3)=N/m$, and hence
$\theta(e)=-N/m$. Evidently, such BSK-map is unique up to
equivalence. Since $\theta$ is an epimorphism, we have
$N=\mathrm{lcm}(2,m)$. By Lemma \ref{sign k}, $S$ is
orientable and has $N/m$ boundary components. The genus of $S$ is uniquely determined by the Hurwitz-Riemann formula.
\end{proof}

\begin{proof}[Proof of Theorem $\ref{d32}$.] Here
the cyclic group $\zz_N$ is generated by three elements of orders
$2,3$ and $m$ and hence $N=\mathrm{lcm}(2,3,m)$. For any BSK-map
$\theta\colon\Lambda\to\zz_N$ we have $\theta(c)=0$, and it follows
from Lemma \ref{sign k}, that $S$ is orientable and its number of
boundary components is $N/l$, where $l$ is the order of
$\theta(e)=-(\theta(x_1)+\theta(x_2)+\theta(x_3))$. We have
$\theta(x_1)=N/2$, and by multiplying $\theta$ by a suitable element
of $\zz_N^\ast$ we may assume $\theta(x_2)=N/3$.

\smallskip
For $\Lambda$ with the signature $(0;+;[2,3,3];\{(\,)\})$, any
BSK-epimorphism $\theta : \Lambda \to \zz_{6}$ is equivalent to one
mapping $(x_1,x_2,x_3)$ either on $(3,2,2)$ or $(3,2,4)$. In the
former case we have $\theta(e)=5$ and $S$ has $1$ boundary component
and genus $3$. In the later case we have $\theta(e)=3$ and $S$ has
$3$ boundary component and genus $2$.

\smallskip
If $\Lambda$ has signature $(0;+;[2,3,4];\{(\,)\})$ or
$(0;+;[2,3,5];\{(\,)\})$, then by Chinese reminder theorem, there is
$c\in\zz_N^\ast$ such that $c\theta$ maps $(x_1,x_2,x_3) $ on
$(N/2,N/3,N/m)$. In both cases we have $\theta(e)=-1$, hence $S$ has
one boundary component. The genus of $S$ is easily computed from the Hurwitz-Riemann formula.
\end{proof}

\subsection{Actions with a 2-punctured disc with two corners as the quotient orbifold.}
This case concerns NEC-groups $\Lambda$ with  signatures
$$(0;+;[3,m];\{(2,2)\})\; {\rm for}\;  m=3,4,5 \; {\rm and} \;  (0;+;[2,m];\{(2,2)\})$$
from Lemma \ref{big o-r} which have the presentation
$$
\langle x_1,x_2, e, c_0, c_1, c_2 \,|\,
x_1^n=x_2^m=c_0^2=c_1^2=c_2^2=(c_0c_1)^2=(c_1c_2)^2=x_1x_2e=1,
c_2e=ec_0 \rangle,
$$
where $n=2$ or $n=3$.

\begin{proof}[Proof of Theorem $\ref{d221}$.] Suppose that $\theta\colon\Lambda\to\zz_N$ is a BSK-map, such that
$\ker\theta$ is a bordered surface group. By Lemma \ref{cycle perm},
we may assume that $\theta(c_0)=\theta(c_2)=N/2$ and
$\theta(c_1)=0$. We have $\theta(x_1)=N/2$, and by multiplying
$\theta$ by an element of $\zz_N^\ast$, we may assume
{$\theta(x_2)=N/m$},
and hence $\theta(e)=N/2-N/m$. Evidently, such
BSK-map is unique up to equivalence. Since $\theta$ is an
epimorphism, we have $N=\mathrm{lcm}(2,m)$. By Lemma
\ref{sign k}, $S$ has $N/2$ boundary components and is
non-orientable, as $x_1c_0$ is a non-orientable word in
$\ker\theta$. The genus of $S$ is uniquely determined by the
Hurwitz-Riemann formula.
\end{proof}

\begin{proof}[Proof of Theorem $\ref{d222}$.] Suppose that $\theta\colon\Lambda\to\zz_N$ is a BSK-map, such that
$\ker\theta$ is a bordered surface group. As in the previous proof,
we can assume $\theta(c_0)=\theta(c_2)=N/2$, $\theta(c_1)=0$ and
$\theta(x_1)=N/3$. Since $\theta$ is an epimorphism, $\zz_N$ is
generated by three elements of orders $2,3$ and $m$ and hence
$N=\mathrm{lcm}(2,3,m)$. By (b) of Lemma \ref{sign k}, $S$ has $N/2$
boundary components.

\smallskip
For $\Lambda$ with the signature {$(0;+;[3,3];\{(\,)\})$}, any
BSK-epimorphism $\theta : \Lambda \to \zz_{6}$ is equivalent to one
mapping $(x_1,x_2)$ either on $(2,2)$ or $(2,4)$. By \cite[Lemma 4.6
and Proposition 4.14]{BCCS}, for every
$\phi\in\mathrm{Aut}(\Lambda)$, $\phi(e)$ is conjugate to $e$ or
$e^{-1}$, and hence the order of $\theta(e)$ is an equivalence
invariant. It follows that the two maps described above are not
equivalent. Indeed, for the first one $\theta(e)=2$ has order $3$,
whereas for the second one $\theta(e)=0$ has order $1$. In both
cases $S$ is orientable, because $\theta(x_1)$, $\theta(x_2)$ are
even, whereas $\theta(c_0)=3$ is odd, and hence there is no
non-orientable word in $\ker\theta$.

\smallskip
If $\Lambda$ has signature {$(0;+;[3,4];\{(\,)\})$} or
{$(0;+;[3,5];\{(\,)\})$}, then by Chinese reminder theorem, there is
$c\in\zz_N^\ast$ such that $c\theta$ maps $(x_1,x_2) $ on
$(N/3,N/m)$. It follows that $\theta$ is unique up to equivalence.
For $m=4$ we have a non-orientable word $x_2^2c_0$ in $\ker\theta$,
hence $S$ is non-orientable. For $m=5$ there is no such word, hence
$S$ is orientable. The genus of $S$ is easily computed from the
Hurwitz-Riemann formula.
\end{proof}

\section{On uniqueness of actions realizing the solutions of the minimum genus and maximum order problems}\label{sec:minmax}

Throughout the rest of the paper the letter $p$, used before to
denote algebraic genus, will be used also to denote a prime integer,
which will not lead to any ambiguity; for the genus we will assume $p\geq 2$.

\subsection{The minimum genus and maximum order problems for finite groups acting on bordered surfaces.}
We start with the following easy proposition which justifies later
definitions.
\begin{prop}\label{existence}
Let $G$ be a finite group. Then there exists a bordered topological
surface $S$, which can be assumed to be orientable or not, such that $G$ acts on $S$ by homeomorphisms.
Furthermore, if $S$ is assumed to be orientable, then the action of
$G$ can be chosen
to contain orientation reversing
elements if and only if $G$ has a subgroup $G'$ of index $2$.
\end{prop}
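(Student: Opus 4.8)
The plan is to work entirely through the dictionary recalled in Section~\ref{sec:preli}: an action of $G$ on a bordered surface is the same thing as a BSK-epimorphism $\theta\colon\Lambda\to G$ from an NEC-group $\Lambda$ whose kernel $\Gamma=\ker\theta$ is a bordered surface group, and then $G$ acts on $S=\mathcal{H}/\Gamma$. So both assertions reduce to exhibiting suitable epimorphisms and reading off the topological type of $\Gamma$ via Lemma~\ref{sign k}. For the existence statement I would fix generators $g_1,\dots,g_r$ of $G$ of orders $m_1,\dots,m_r$, take $\Lambda$ with signature $(h;\pm;[m_1,\dots,m_r];\{(\,)\})$ (one empty period cycle, orbit genus $h$ chosen large enough that the area (\ref{area}) is positive), and set $\theta(x_i)=g_i$, $\theta(c)=1$, with $\theta$ equal to the identity on the remaining hyperbolic or glide generators (the value on $e$ is then forced by the long relation). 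Since each $\theta(x_i)$ has order exactly $m_i$ and the reflection $c$ lies in $\ker\theta$, the kernel is a bordered surface group. With the sign $+$ the only orientation-reversing generator is $c\in\Gamma$, so no non-orientable word survives and $S$ is orientable by Lemma~\ref{sign k}(a); with the sign $-$ a glide reflection $d\in\ker\theta$ is itself a non-orientable word, so $S$ is non-orientable. This realises $G$ on a surface of either kind.

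For the second statement, orient $S$ and let $\chi\colon G\to\Z_2$ send an element to $0$ or $1$ according as it preserves or reverses orientation. This is a homomorphism, so orientation-reversing elements exist precisely when $\chi$ is onto, i.e.\ when $G'=\ker\chi$ is an index-$2$ subgroup; this disposes of the forward implication at once.

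For the converse I would again build an explicit epimorphism, but now deliberately using a \emph{non-orientable} quotient orbifold. Given an index-$2$ subgroup $G'$, choose generators $h_1,\dots,h_s$ of $G'$ (of orders $n_1,\dots,n_s$) and an element $\sigma\in G\setminus G'$, take $\Lambda$ with signature $(g';-;[n_1,\dots,n_s];\{(\,)\})$ for some $g'\ge 2$, and set $\theta(x_j)=h_j$, $\theta(c)=1$, and $\theta(d_i)=\sigma$ on every glide reflection (with $\theta(e)$ forced). As before this is a smooth epimorphism with bordered kernel $\Gamma$. The point is that $S=\mathcal{H}/\Gamma$ is nonetheless orientable: by Lemma~\ref{sign k}(a) I only need that $\Gamma$ contains no non-orientable word, and a parity count gives this, since any non-orientable word avoids $c$ and hence involves the $d_i$ an odd number of times in total, so its $\theta$-image lies in $\sigma^{\mathrm{odd}}\subseteq\sigma G'$ and cannot be trivial. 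To see that the action reverses orientation I would argue by contradiction: if every element of $G$ preserved the orientation of the orientable surface $S$, then the quotient orbifold $S/G=\mathcal{H}/\Lambda$ would have orientable underlying surface, contradicting the sign $-$ in the signature of $\Lambda$. Thus $\chi$ is nontrivial; and since each $h_j=\theta(x_j)$ is the image of an elliptic generator fixing a cone point and hence acts as a local rotation, $\chi$ vanishes on the generators of $G'$, so $G'\subseteq\ker\chi$. A nontrivial $\chi$ whose kernel contains the index-$2$ subgroup $G'$ must have $\ker\chi=G'$, which is exactly what is required.

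I expect the main obstacle to be precisely this last step, namely disentangling the orientability of the surface $S$ (a property of the kernel $\Gamma$, controlled by Lemma~\ref{sign k}) from the orientation behaviour of the $G$-action (controlled by the sign of the quotient $\Lambda$). The device of allowing $\Lambda$ to be non-orientable while forcing $\ker\theta$ to stay orientable through the parity of the glide-reflection images is what reconciles the two requirements, and the clash with the sign $-$ is what guarantees a genuinely orientation-reversing element rather than a vacuously orientation-preserving action.
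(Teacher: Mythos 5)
Your proposal is correct and follows essentially the same route as the paper: both realise $G$ via an explicit smooth epimorphism $\theta\colon\Lambda\to G$ sending every canonical reflection to $1$, using a sign $+$ (resp.\ sign $-$) NEC-group for the orientable (resp.\ non-orientable) surface, and, for the orientation-reversing case, a sign $-$ group whose glide reflections map into $G\setminus G'$ so that the parity argument keeps $\ker\theta$ free of non-orientable words, exactly as in Lemma \ref{sign k}. The only differences are cosmetic: the paper encodes the generators of $G$ in empty period cycles ($e_i\mapsto g_i$) rather than in proper periods, and it merely asserts that the resulting action reverses orientation, a point which you justify carefully via orientability of the quotient, so your write-up is, if anything, slightly more complete.
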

\begin{pf}Let $g_1, \ldots, g_r$ be a set of generators
of $G$ and let $g_{r+1} = (g_1 \ldots g_r)^{-1}$. Clearly we can
assume that $r \geq 2$.
Let $\Lambda$ be an NEC-group with the
signature $(0;+;[\;]; \{(\,), \stackrel{r+1}{\ldots}\,,(\,)\})$ and let
us define an epimorphism $\theta: \Lambda \to G$ mapping $e_i$ to $g_i$
for $i \leq r+1$ and all $c_i$ to $1$, the identity element of $G$. Then, for $\Gamma =
\ker \theta$, we have $G\cong \Lambda/\Gamma$ acting as a group of
conformal automorphisms on $S={\mathcal H}/\Gamma$, which has the
conformal structure of orientable bordered Klein surface inherited
from the hyperbolic plane ${\mathcal H}$.
Now assume that $G$ contains a subgroup $G'$ of index $2$. Assume
that $G'$ is generated as above, let $x \in G\setminus G'$, consider
an NEC-group $\Lambda $ with signature $(1;-;[\;]; \{(\,),
\stackrel{r+2}{\ldots}\,,(\,)\})$ and define an epimorphism $\theta:
\Lambda \to G$ mapping all reflections $c_i$ to $1$, $e_i$
to $g_i$ for $1 \leq i \leq r+1$, $e_{r+2}$ to {$x^{-2}$}, and
$d_1$ to {$x$}. Then for $\Gamma = \ker \theta$, we have $G\cong
\Lambda/\Gamma$ acting as a group of conformal or anticonformal
automorphisms on $S={\mathcal H}/\Gamma$,
where $x$ reverses orientation.
If we need the action on {a} non-orientable surface, then it is
sufficient to take an NEC-group $\Lambda$ with signature $(1;-;[\;];
\{(\,), \stackrel{r+1}{\ldots}\,,(\,)\})$ and define $\theta$ on
$e_i, c_i$ as above and $\theta(d_1)=1$, in virtue of
(a) of Lemma \ref{sign k}.
\end{pf}

So, let ${\mathcal K}_{+}(N)$ (resp. ${\mathcal K}_{-}(N)$) be the
family of orientable (resp. non-orientable) bordered topological
surfaces,
admitting a self-homeomorphism of order $N$. Denote by $p=p(S)$
the algebraic genus of a bordered surface $S$ and recall that it is
{the rank of the fundamental group of $S$ }
equal to $\varepsilon g +k-1$, where $g$ is the topological genus of $S$,
$k$ is the number of its boundary components and $\varepsilon=2$ or $1$
if $S$ is orientable or not. By $S_p^\pm$ will be denoted a
bordered surface of algebraic genus $p$, orientable if the sign is
$+$ and non-orientable otherwise, and similar meaning will have
$S^\pm_{g,k}$

\s Denote by ${\rm H}_{\rm p}(S)$ the set of all periodic
self-homeomorphisms of $S$ and consider two of its subsets ${\rm
H}_{\rm p}^{+}(S)$ and ${\rm H}_{\rm p}^{-}(S)$, consisting of respectively
orientation-preserving and orientation-reversing
self-homeomorphisms when $S$ is orientable. Finally let
$$
\begin{tabular}{l}
${\mathcal K}_{+}^{+}(N) = \{ S\in {\mathcal K}_{+}(N) \, |\,
 { \text{there is an element } \varphi \text{ of } {\rm H}_{\rm p}^{+}(S) \text{ such that }
 \#(\varphi)=N} \},$\\[0.5mm]
${\mathcal K}_{+}^{-}(N) = \{ S\in {\mathcal K}_{+}(N) \, |\,
{ \text{there is an element } \varphi \text{ of } {\rm H}_{\rm p}^{-}(S) \text{ such that }
 \#(\varphi)=N}\}, $
 \end{tabular}
 $$
where the operator $\#$ stands for the order.
With these notations
we define:
$$\begin{tabular}{ll}
$p_{+}(N) = \min \{ p(S) \,|\, S \in {\mathcal K}_{+}(N)\}$,\; & $p_{-}(N) = \min \{ p(S) \,|\, S \in {\mathcal K}_{-}(N)\}$\\[0.5mm]
$p_{+}^{+}(N) = \min \{ p(S) \,|\, S \in {\mathcal K}_{+}^{+}(N)\}$,\; & $p_{+}^{-}(N) = \min \{ p(S) \,|\, S \in {\mathcal K}_{+}^{-}(N)\}$
\end{tabular}
$$
and \\[-6mm]
$$p(N) = \min \{ p_{+}(N), p_{-}(N) \}$$
The calculation of the above five values is known as the minimal genus problem.
{A bordered surface $S$ is called {\it $N$-minimal\/} if $p(S)$ attains
$p(N)$, $p_{\pm}(N)$ or $p_{+}^{\pm}(N)$.}

\s
Another problem of a similar type is the maximum order problem
which consists in finding, for a given $p$, the maximal order of a
finite action on a bordered topological surface of algebraic genus
$p$. For $G=\zz_N$ we refine this problem by considering
$$\begin{tabular}{ll}
$N_{+}^{+}(p) = \max \{ N \,|\, S^+_p \in {\mathcal K}_{+}^{+}(N) \}$, &
$N_{+}^{-}(p) = \max \{ N \,|\, S^-_p \in {\mathcal K}_{+}^{-}(N) \}$\\[0.5mm]
$N_{+}(p) = \max \{N_+^+(p), N_+^-(p)\}$, & $N_{-}(p) = \max \{ N \,|\, S^-_p\in {\mathcal K}_{-}(N) \}$
\end{tabular}
$$
and \\[-6mm]
$$N(p) = \max \{N_+(p), N_-(p)\}$$
These problems, of minimal genus and maximal order, were solved in
\cite{BEGG}, and here we consider the question of uniqueness of topological type of self-homeomorphisms of maximal order and self-homeomorphisms acting on surfaces of
minimal genus.

\subsection{On topological type of cyclic actions of a given non-prime order on bordered orientable surfaces of minimal genus}\label{mg+}
\begin{thm}[\cite{BEGG}, Theorem 3.2.5]\label{3.2.5}
Let $N$ be a non-prime
odd integer and let $p$ be the smallest prime dividing $N$. Then
$$
p_+^+(N)=p_+(N)=
\begin{cases}
(p-1)\dfrac{N}{p} & {\rm if\ } p^2 \mid N\\[4mm]
(p-1)\dfrac{N-p}{p} & {\rm if\ } \! p^2 \not| \, N
\end{cases}$$
and the corresponding $N$-minimal surface has $1$ boundary component.
\end{thm}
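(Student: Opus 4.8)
The plan is to first dispatch the equality $p_+^+(N)=p_+(N)$ using the parity of $N$, and then to extract the common value from the classification of Section \ref{sec:statement}. Since $N$ is odd there is no nontrivial homomorphism $\zz_N\to\zz_2$, so the orientation homomorphism of any $\zz_N$-action on an orientable surface is trivial; that is, every self-homeomorphism of order $N$ of an orientable surface is orientation preserving. Consequently $\mathcal{K}_+(N)=\mathcal{K}_+^+(N)$, whence $p_+(N)=p_+^+(N)$, and it remains only to compute this common minimum.

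Because the candidate values $(p-1)N/p$ and $(p-1)(N-p)/p$ are both less than $N$, any surface attaining the minimum has algebraic genus smaller than $N$, hence satisfies $N>p(S)-1$ and falls under Lemma \ref{big o-r}; I may therefore assume the quotient orbifold is one of the ten types and invoke the corresponding theorems. Because $N$ is odd, Lemma \ref{aut rep} forbids non-empty period cycles and forces every proper period to divide $N$, hence to be odd, which eliminates signatures $(1)$--$(5)$, $(9a)$, $(9b)$, $(10a)$, $(10b)$. For the remaining M\"obius-band type $(6)$, the orientable case (Theorem \ref{mb1ori}) requires $N$ even, so it too is excluded. Thus for odd $N$ an orientable surface arises only from the $2$-punctured disc (Theorem \ref{d21}) or the orientable $1$-punctured annulus (Theorem \ref{ann1}), and I minimise the algebraic genus over each of these two families.

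The decisive family is the $2$-punctured disc. By Theorem \ref{d21} its algebraic genus is $1+N(1-1/m-1/n)$ subject to $\mathrm{lcm}(m,n)=N$, so minimising the genus is the same as maximising $1/m+1/n$. Writing $d=(m,n)$, $m=dm'$, $n=dn'$ with $(m',n')=1$ and $dm'n'=N$, one obtains the clean identity $1/m+1/n=(m'+n')/N$; thus I must maximise $m'+n'$ over coprime factorisations $N=d\,m'n'$ with $m=dm'\ge 2$ and $n=dn'\ge 2$. There are two competing mechanisms: taking $m'=1$ and $d$ the smallest prime $p\mid N$ gives $m'+n'=1+N/p$, realised by $(m,n)=(p,N)$; while taking $d=1$ and splitting $N$ into coprime parts $\ge 2$ gives $m'+n'=u+N/u$ for the smallest unitary divisor $u\ge 2$. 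When $p^2\nmid N$ one has $u=p$ and the second mechanism wins, giving maximum $p+N/p$ and genus $(p-1)(N-p)/p$; when $p^2\mid N$ the value $p$ is no longer a unitary divisor and the first mechanism wins, giving maximum $1+N/p$ and genus $(p-1)N/p$. These are exactly the two cases of the theorem. In each case the gcd $(m,n)$ equals $p$ or $1$, so the divisibility constraint $k\mid t/(t,N/t)$ of Theorem \ref{d21} forces $k=1$, which both realises the optimum and yields the single boundary component claimed. Finally, by Theorem \ref{ann1} the orientable $1$-punctured annulus has genus $1+N(m-1)/m$, minimised at $m=p$ to give $1+(p-1)N/p$, which strictly exceeds both values above; so the $2$-punctured disc indeed attains the minimum.

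The main obstacle is the extremal comparison in the case $p^2\mid N$: one must show that $1+N/p\ge u+N/u$ for every unitary divisor $u$ with $2\le u\le N/2$, so that the first mechanism really dominates. I would handle this with the convexity of $x\mapsto x+N/x$ (minimised at $\sqrt N$) together with the observation that, since $p^2\mid N$, the smallest admissible $u$ already exceeds $p$ by at least the gap to the next prime or to $p^2$; a short case split according to whether the full $p$-power lies in $u$ or in $N/u$ then gives the inequality. Once this extremal step is settled and the existence constraints of Theorem \ref{d21} are checked at the optimum (which, as noted, force $k=1$), the formula for $p_+^+(N)=p_+(N)$ and the assertion that the $N$-minimal surface has one boundary component both follow from the Hurwitz--Riemann formula, recovering \cite{BEGG}.
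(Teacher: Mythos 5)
Your proposal is correct, but it necessarily takes a different route from the paper, because the paper does not prove Theorem \ref{3.2.5} at all: it imports it from \cite{BEGG}, and the only thing it later extracts from the proof in \cite{BEGG} (for Corollary \ref{min gen}) is that the minimum is realized precisely by the signature $(0;+;[p,q];\{(\,)\})$ with $q=N$ if $p^2\mid N$ and $q=N/p$ otherwise. What you do instead is re-derive the theorem from the paper's own classification machinery: oddness of $N$ forces every order-$N$ homeomorphism of an orientable surface to preserve orientation, so $p_+(N)=p_+^+(N)$; Lemma \ref{aut rep} and Lemma \ref{big o-r} eliminate all signatures with corner points or even proper periods, leaving $(6)$, $(7)$, $(8)$; Theorem \ref{mb1ori} rules out $(6)$ over orientable surfaces since it requires $N$ even; Theorem \ref{ann1} gives genus at least $1+(p-1)N/p$ for type $(8)$, strictly above both target values; and the problem collapses to maximizing $1/m+1/n=(m'+n')/N$ over $dm'n'=N$, $(m',n')=1$, $dm',dn'\ge 2$, exactly as you set it up, with the optimal signatures $(p,N)$ (if $p^2\mid N$) and $(p,N/p)$ (otherwise) forcing $k=1$ through the constraint $k\mid t/(t,N/t)$. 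This buys a self-contained proof and, as a by-product, exactly the realization statement about the optimal signature that the paper needs for Corollary \ref{min gen}.

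Two points should be tightened to make the argument airtight. First, your ``two competing mechanisms'' do not a priori exhaust all factorizations: you should add that for fixed $d\ge 2$ the sum $m'+n'$ with fixed product $m'n'=N/d$ is maximized at the unbalanced split $m'=1$, so every configuration with $d\ge 2$ is dominated by $1+N/d\le 1+N/p$, and only then does the comparison reduce to your two mechanisms. Second, the extremal inequality you flag as the main obstacle in the case $p^2\mid N$ does hold, and more cleanly than via convexity: $1+N/p\ge u+N/u$ is equivalent to $(u-p)(N/u-p)\ge p(p-1)$, and since $u$ is a unitary divisor, whichever of $u$, $N/u$ carries the full power of $p$ is at least $p^2$, while the other is coprime to $p$, hence has all its prime factors $>p$ and is at least $p+2$; thus $(u-p)(N/u-p)\ge 2p(p-1)$, with strict inequality. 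The strictness also shows the optimal signature is unique in each case, which is what guarantees that \emph{every} $N$-minimal surface has one boundary component, not merely the one you construct.
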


\begin{cor}\label{min gen}
The action realizing $p_+^+(N)$ and $p_+(N)$ given in Theorem
$\ref{3.2.5}$ is unique up to topological conjugation if $p^2$ does
not divide $N$ and there are $p-1$ classes of such action in the
other case.
\end{cor}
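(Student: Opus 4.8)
The plan is to combine the minimal–genus data of Theorem \ref{3.2.5} with the orbifold classification of Section \ref{sec:types} and the count of Theorem \ref{d21}. Write $q=p_+^+(N)=p_+(N)$ for the minimal algebraic genus. Since $q=(p-1)N/p$ or $q=(p-1)(N-p)/p$, in either case $q\le (p-1)N/p<N$, so $N>q-1$ and the action realising $q$ on an orientable surface $S$ falls under Lemma \ref{big o-r}: the signature of the corresponding $\Lambda$ is one of the ten listed. As $N$ is odd, no orientation-reversing self-homeomorphism of order $N$ exists (its square is orientation-preserving, and an odd power recovers it), so every such action is orientation-preserving; moreover by Lemma \ref{aut rep} the signature of $\Lambda$ has only empty period cycles, which eliminates signatures $(1)$–$(5)$, $(10a)$, $(10b)$. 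Signatures $(9a)$, $(9b)$ force $N=\mathrm{lcm}(2,3,m)$ or $N=\mathrm{lcm}(2,m)$, both even, and are excluded too.

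It remains to rule out $(6)$ and $(8)$. By Theorem \ref{3.2.5} the minimal surface has one boundary component, so $k=1$. For signature $(6)$ Theorem \ref{mb1ori} requires $N=2\,\mathrm{lcm}(m,N/k)$, impossible for odd $N$; for the orientable case of signature $(8)$, Theorem \ref{ann1}(i) needs either $N$ even (alternative $(1)$) or an integer $n$ with $1\le n<k=1$ (alternative $(2)$), again impossible. Hence the quotient orbifold is the $2$-punctured disc of signature $(7)$, and Theorem \ref{d21} applies: $S$ is orientable, $N=\mathrm{lcm}(m,n)$, and its algebraic genus is $1+N(1-1/m-1/n)$.

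Next I would pin down the cone orders. Putting $1+N(1-1/m-1/n)=q$ and setting $a=N/m$, $b=N/n$ — so that $a,b$ are divisors of $N$ with $\gcd(a,b)=1$ (equivalent to $\mathrm{lcm}(m,n)=N$) and $a,b\le N/p$ (since $m,n\ge p$) — minimality of the genus is exactly maximality of $a+b$. A short convexity estimate, bounding $a+b\le a+\mathrm{const}/a$ on the relevant range of divisors and evaluating at the endpoints, shows the maximum is $N/p+1$, attained only at $\{a,b\}=\{N/p,1\}$ when $p^2\mid N$, and $N/p+p$, attained only at $\{a,b\}=\{N/p,p\}$ when $p^2\nmid N$. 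Thus, up to order, $(m,n)=(p,N)$ if $p^2\mid N$ and $(m,n)=(p,N/p)$ if $p^2\nmid N$, and these reproduce the values of Theorem \ref{3.2.5}. Since then $t=(m,n)$ equals $p$ in the first case and $1$ in the second, the divisibility $k\mid t/(t,N/t)$ of Theorem \ref{d21} forces $k=1$, consistently with Theorem \ref{3.2.5}.

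Finally I would read off the count from the first formula of Theorem \ref{d21}, which applies because $m\ne n$ (one has $N\ne p$ as $N$ is composite, and $N/p\ne p$ as $p^2\nmid N$). When $p^2\mid N$ we have $t=p$, $k=1$, and $C$, the largest divisor of $t/k=p$ coprime to $Nk/t=N/p$, equals $1$ because $p\mid N/p$; hence the number of classes is $\varphi(p)\psi(1)=p-1$. When $p^2\nmid N$ we have $t=1$, so $C=1$, and the number of classes is $\varphi(1)\psi(1)=1$. I expect the main obstacle to be the number-theoretic core of the third paragraph: proving that the minimal genus is realised by a \emph{unique} pair of cone orders up to order, so that no other signature-$(7)$ configuration contributes extra topological types. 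The elimination of signatures $(6)$, $(8)$ and the substitution into Theorem \ref{d21} are then routine.
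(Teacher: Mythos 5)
Your route is genuinely different from the paper's. The paper's proof is two lines long: it cites the \emph{proof} of Theorem 3.2.5 in \cite{BEGG}, where it is shown that the minimum genus is realized \emph{only} by an NEC-group with signature $(0;+;[p,q];\{(\,)\})$, $q=N$ or $q=N/p$ according as $p^2\mid N$ or not; it then reads off the count $\varphi(t)$ from Theorem \ref{d21}. You instead use only the \emph{statement} of Theorem \ref{3.2.5} (the genus value and the ``one boundary component'' clause), and re-derive the uniqueness of the signature from the paper's own machinery: Lemma \ref{big o-r} to list the possible quotient orbifolds, parity of $N$ plus Lemma \ref{aut rep} and Theorems \ref{mb1ori}, \ref{ann1} to eliminate everything except the $2$-punctured disc, and then an extremal argument to pin down the cone orders. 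This is more self-contained (it does not lean on the internals of \cite{BEGG}), your signature eliminations are all correct, and your final application of Theorem \ref{d21} (values of $t$, $k$, $C$, and the check $m\ne n$) exactly reproduces the paper's numbers $p-1$ and $1$.

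There is, however, a genuine gap in the step you yourself flag as the main obstacle, and it is not closed by the argument you sketch. The convexity bound $a+b\le a+N/a$ (valid since coprime divisors satisfy $ab\mid N$), evaluated at the endpoint $a=N/p$, yields the bound $N/p+p$ in \emph{both} cases; when $p^2\mid N$ this bound is attained only by the pair $\{N/p,\,p\}$, which is \emph{not} coprime, so the convexity estimate alone proves neither that the maximum is $N/p+1$ nor that $\{N/p,1\}$ is the unique maximizer. To close this you must feed the coprimality into the endpoint analysis: when $p^2\mid N$, every prime factor of $N$ divides $N/p$, so $a=N/p$ forces $b=1$; and when $a<N/p$, the next admissible divisor satisfies $a\le N/\min(p^2,q)$, where $q$ is the second smallest prime factor of $N$, after which one must still check (using that any divisor $b>1$ coprime to $a$ satisfies $b\ge p$ and $ab\mid N$, so e.g.\ if $p\mid a$ then $b$ divides the $p$-free part $M$ of $N$) that all such pairs give $a+b\le N/p$, with the borderline case $N=p^2$ handled separately since there the crude bound $2\sqrt{N}=2p>p+1$ fails and one must use that the only divisors are $1,p,p^2$. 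The inequalities do all go through --- your claimed answer is correct --- but this case analysis is exactly the content that the paper outsources to \cite{BEGG}, and as written your proof does not contain it.
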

\begin{pf}In the proof of Theorem  \ref{3.2.5} in  \cite{BEGG} it was shown that the minimum genus is realized just for an  NEC-group
$\Lambda$ having signature $(0;+;[p,q]; \{(\,)\})$, where $q=N$ if $p^2\mid N$,
 and otherwise $q=N/p$. We have $k=1$, and
$t=(p,q)$ is equal to $p$ and $1$ respectively.
By Theorem \ref{d21}, there are $\varphi(t)$ topological types of action corresponding to this signature.
\end{pf}

\begin{thm}[\cite{BEGG}, Theorem 3.2.6]\label{3.2.6}
Let $N\neq 2$ be an even integer not divisible by $4$. Then
$p_+^+(N)=p_+^-(N)=N/2-1$ . Moreover any $N$-minimal surface
from ${\mathcal K}_+^+(N)$ has one boundary component, whilst
any such surface from ${\mathcal K}_+^-(N)$ has $N/2$ boundary components.
\end{thm}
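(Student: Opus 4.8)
The plan is to deduce Theorem \ref{3.2.6} from the classification obtained in Section \ref{sec:main}. By the Hurwitz--Riemann formula (\ref{HR}), if $S=\mathcal H/\Gamma$ has algebraic genus $p$ and $\zz_N=\Lambda/\Gamma$, then $p-1=N\mu(\Lambda)$, so minimizing the algebraic genus is the same as minimizing the area $\mu(\Lambda)$ computed from (\ref{area}). Since the value we aim at satisfies $p-1<N$, Lemma \ref{big o-r} applies and $\Lambda$ carries one of the ten listed signatures. First I would record how each signature behaves with respect to orientation: an orientation-preserving isometry of $\mathcal H$ descends to an orientation-preserving homeomorphism of $S$, whereas a canonical reflection $c_i\notin\Gamma$ (that is, with $\theta(c_i)\neq 0$) descends to an orientation-reversing one. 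Combined with the orientability statements in the ten theorems, this sorts the signatures: an orientation-preserving action on an orientable $S$ forces the sign $+$ and rules out corners (a product of two consecutive reflections would be a torsion element of $\Gamma$), leaving only $(7),(8),(9)$; an orientation-reversing action on an orientable $S$ must come from the remaining candidates $(2),(3),(4),(5),(6)$ and $(10a)$.

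For $\mathcal K_+^-(N)$ I would read off $\mu(\Lambda)$ from the genus formulas in Theorems \ref{ann2}, \ref{mb2}, \ref{d12}, \ref{d14}, \ref{mb1ori} and \ref{d222}. All of these give $\mu(\Lambda)\geq 1/2$ except signature $(4)$, namely $(0;+;[m];\{(2,2)\})$. In the orientable case of Theorem \ref{d12} one has $m$ odd and $N=2m$, so $m=N/2$ is forced and $\mu(\Lambda)=1/2-1/m=1/2-2/N$, yielding $p=N/2-1$ on the $N/2$-holed sphere. This gives $p_+^-(N)=N/2-1$, realized with $N/2$ boundary components.

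For $\mathcal K_+^+(N)$ the minimum must come from $(7),(8),(9)$. Signatures $(8)$ and $(9)$ give $\mu(\Lambda)\geq 1/2$, so the minimum lies in $(7)$, $(0;+;[m,n];\{(\,)\})$, where $\mu(\Lambda)=1-1/m-1/n$ and $N=\mathrm{lcm}(m,n)$. Here I would show $1/m+1/n\leq 1/2+2/N$: writing $N=2q$ with $q=N/2$ odd, either one of $m,n$ equals $2$, which forces the other to be $N/2$ or $N$ and gives $1/m+1/n\leq 1/2+2/N$, or both are $\geq 3$, in which case (one of them being then an even number $\geq 6$) $1/m+1/n\leq 1/6+1/3=1/2$. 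Hence $\mu(\Lambda)\geq 1/2-2/N$, with equality exactly for $\{m,n\}=\{2,N/2\}$; this pair satisfies the existence conditions of Theorem \ref{d21} with $t=1$ and $k=1$. Thus $p_+^+(N)=N/2-1$, realized with a single boundary component. (In fact Theorems \ref{d12} and \ref{d21} show both realizing actions are unique up to conjugation, which would feed a corollary analogous to Corollary \ref{min gen}.)

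The point I expect to require the most care is the orientation bookkeeping behind the sorting, specifically that signature $(4)$ genuinely produces an orientation-reversing homeomorphism of order exactly $N$. The orientation character $\omega_S\colon\zz_N\to\zz_2$ is reduction modulo $2$; since $N\equiv 2\pmod 4$ every generator of $\zz_N$ is odd, so $\omega_S$ sends each generator to $1$, and the required orientation-reversing element of order $N$ is realized by $xc_0$, whose image $\theta(x)+N/2$ is a generator precisely because $N/2$ is odd. Once this is settled, the area inequalities above finish the argument; the only computational step is the elementary $2$-adic case analysis for signature $(7)$, where the small cases $q\in\{3,5\}$ (that is, $N\in\{6,10\}$) are best checked directly.
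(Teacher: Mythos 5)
Your overall route is necessarily different from the paper's, since the paper does not prove this statement at all: it is imported from \cite{BEGG}, and the paper only records (in the corollary that follows it) which signatures realize the minima, namely $(0;+;[2,N/2];\{(\,)\})$ for $p_+^+(N)$ and $(0;+;[N/2];\{(2,2)\})$ for $p_+^-(N)$. Your plan of re-deriving the theorem from Lemma \ref{big o-r} and the classification in Section \ref{sec:main} is viable, and your area computations, the identification of these two extremal signatures, and the boundary counts all come out right. However, there is a genuine gap in the orientation bookkeeping on which the case division rests. The claim that ``an orientation-preserving isometry of $\mathcal H$ descends to an orientation-preserving homeomorphism of $S$'' is false: for example, in the situation of Lemma \ref{cycle perm} a corner rotation $c_{ij-1}c_{ij}$ with $c_{ij-1}\in\Gamma$ and $c_{ij}\notin\Gamma$ is an orientation-preserving isometry of $\mathcal H$, yet it induces on $S$ the \emph{same} homeomorphism as the reflection $c_{ij}$ (they differ by an element of $\Gamma$), which is orientation-reversing. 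What is true is only the half you also state: cone-point generators $x_i$ (whose fixed points project to interior points) act orientation-preservingly, and reflections outside $\Gamma$ act orientation-reversingly. As a consequence, your sorting ``orientation-reversing actions on orientable $S$ come only from $(2),(3),(4),(5),(6),(10a)$'' is wrong as a statement: one and the same signature can carry both kinds of actions, so the complement-of-the-other-bucket argument is invalid. Concretely, signature $(8)$, i.e.\ $(0;+;[m];\{(\,),(\,)\})$, in case $(1)$ of Theorem \ref{ann1} --- which exists precisely when $N/2$ is odd, i.e.\ under the hypothesis of this very theorem --- has $\theta(c_2)=N/2\neq 0$; the reflection $c_2$ then induces an orientation-reversing homeomorphism, so the orientation character of the action is reduction mod $2$, and every generator of $\mathbb{Z}_N$ (odd, being coprime to $N$) is an orientation-reversing homeomorphism of order exactly $N$. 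Hence orientable surfaces with $1$-punctured-annulus quotients do lie in ${\mathcal K}_{+}^{-}(N)$, and your lower-bound argument for $p_+^-(N)$ simply never examines them.

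The theorem itself survives this omission: for signature $(8)$ one has $\mu(\Lambda)=(m-1)/m\geq 1/2>1/2-2/N$, so such actions give $p\geq N/2+1$ and cannot compete with the value $N/2-1$ coming from signature $(4)$; moreover they have $k\ne N/2$ in general, but that no longer matters once they are excluded from minimality. So the repair is one line: add signature $(8)$ (Theorem \ref{ann1}, case $(1)$) to the orientation-reversing list and record that its area is at least $1/2$. The same care should be taken in the orientation-preserving bucket: signatures $(7)$ and $(9)$ are automatically all orientation-preserving because their unique conjugacy class of reflections must meet $\Gamma$ (so $\theta(c)=0$ by commutativity) and the group is then generated by images of cone-point rotations, but signature $(8)$ belongs to that bucket only in case $(2)$ of Theorem \ref{ann1}, when both reflections lie in $\Gamma$. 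With the buckets corrected in this way (and with sign $+$ justified by the standard fact that the quotient of an oriented surface by an orientation-preserving action is orientable, rather than by the false lifting principle), the rest of your argument --- the $2$-adic estimate $1/m+1/n\leq 1/2+2/N$ for signature $(7)$, the equality case $\{m,n\}=\{2,N/2\}$ with $k=1$ via Theorem \ref{d21}, and the realization of $p_+^-(N)=N/2-1$ on the $N/2$-holed sphere via Theorem \ref{d12} with the generator $\theta(x)+N/2$ --- is correct and matches the signatures recorded in the paper's corollary.
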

\begin{cor}
The actions realizing $p_+^+(N)$ and $p_+^-(N)$ given in Theorem
$\ref{3.2.6}$ are unique up to topological conjugation.
\end{cor}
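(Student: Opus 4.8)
The plan is to follow the template of Corollary \ref{min gen}: identify, from the proof of Theorem \ref{3.2.6} in \cite{BEGG}, the signature of the NEC-group $\Lambda$ underlying each extremal action, and then read off the number of topological types from the appropriate theorem of Section \ref{sec:statement}. Write $N=2q$ with $q=N/2$ odd and $q\ge 3$. Since the minimal genus is $p=N/2-1$ and $N>p-1$, Lemma \ref{big o-r} applies, and by the Hurwitz--Riemann formula (using $\mu(\Gamma)=p-1=N\mu(\Lambda)$) any realizing $\Lambda$ must satisfy $\mu(\Lambda)=1/2-2/N$. Inspecting the areas of the ten signatures, I would check that only $(4)$ and $(7)$ attain this value: signature $(4)=(0;+;[N/2];\{(2,2)\})$, and signature $(7)=(0;+;[2,N/2];\{(\,)\})$ (the constraint $\mathrm{lcm}(m,n)=N$ together with $1/m+1/n=1/2+2/N$ forces $(m,n)=(2,N/2)$).

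Next I would separate the two families by the number of boundary components, which is exactly the invariant Theorem \ref{3.2.6} records. By Lemma \ref{sign k} and the proof of Theorem \ref{d12}, signature $(4)$ with $m=N/2$ odd and $N=2m$ yields the orientable $N/2$-holed sphere, so $k=N/2$; this is the surface attached to $\mathcal{K}_+^-(N)$. By the proof of Theorem \ref{d21}, signature $(7)$ with $t=(2,N/2)=1$ forces $k=1$, the surface attached to $\mathcal{K}_+^+(N)$. Since these are the only orientable genus-$(N/2-1)$ surfaces carrying a $\zz_N$-action, and $q\ge 3$ makes the two boundary counts distinct, the matching is unambiguous.

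It then remains only to count. For $\mathcal{K}_+^-(N)$, Theorem \ref{d12} states directly that the action with this quotient is unique up to topological conjugation. For $\mathcal{K}_+^+(N)$, I would apply Theorem \ref{d21} with $(m,n)=(2,N/2)$, $t=1$, $k=1$: since $2\ne N/2$ the relevant count is $\varphi(t/kC)\psi(C)$, where $C$ is the largest divisor of $t/k=1$ coprime to $Nk/t$, so $C=1$ and the count is $\varphi(1)\psi(1)=1$. Thus each extremal action is unique, with the boundary-component counts ($1$ and $N/2$) matching those asserted in Theorem \ref{3.2.6}.

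The counting itself is trivial; the real work, and the main obstacle, is the identification step --- verifying that only signatures $(4)$ and $(7)$ can carry a genus-$(N/2-1)$ action, and then correctly assigning them to $\mathcal{K}_+^-$ and $\mathcal{K}_+^+$. I would carry out the assignment through the boundary-component count of Theorem \ref{3.2.6} rather than through a direct orientation-character computation, precisely because deciding whether the element $N/2$ (the common image of the corner reflections in signature $(4)$) acts on $S$ as an orientation-reversing homeomorphism is the kind of delicate bookkeeping that the boundary-count argument sidesteps.
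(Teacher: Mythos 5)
Your proposal is correct, and its final counting step is exactly the paper's: once the two signatures are pinned down, uniqueness for $p_+^+(N)$ comes from Theorem \ref{d21} (with $(m,n)=(2,N/2)$, $t=1$, $k=1$, $C=1$, giving $\varphi(1)\psi(1)=1$ class) and uniqueness for $p_+^-(N)$ from Theorem \ref{d12}. Where you genuinely diverge is the identification step. The paper simply quotes the proof of Theorem 3.2.6 in \cite{BEGG}, which already shows that the extremal NEC-groups must have signatures $(0;+;[2,N/2];\{(\,)\})$ and $(0;+;[N/2];\{(2,2)\})$ respectively; you instead re-derive this inside the paper's own machinery, using Lemma \ref{big o-r} together with the Hurwitz--Riemann computation $\mu(\Lambda)=(p-1)/N=1/2-2/N$ to eliminate eight of the ten signatures (all others have area at least $1/2$), and using the lcm constraint of Theorem \ref{d21} to force $(m,n)=(2,N/2)$ in case $(7)$. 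You then match signature $(4)$ to $\mathcal{K}_+^-(N)$ and signature $(7)$ to $\mathcal{K}_+^+(N)$ purely through the boundary counts ($N/2$ versus $1$) asserted in the statement of Theorem \ref{3.2.6}; this matching is logically sound, since topological conjugation preserves the property of realizing $p_+^\pm(N)$, existence is guaranteed by Theorem \ref{3.2.6}, and all realizing actions for each problem are thereby confined to a single equivalence class. What each route buys: yours is self-contained --- it needs only the statement of Theorem \ref{3.2.6}, not its proof, and it deliberately sidesteps the delicate question of whether the signature-$(4)$ action is generated by an orientation-reversing homeomorphism (a verification which is indeed nontrivial, since $\ker\theta$ contains reflections and the orientation character does not factor naively through $\Lambda$); the paper's route is shorter, matches the template of the neighbouring corollaries, and inherits the orientation-type assignment directly from \cite{BEGG} without needing the boundary-count clause.
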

\begin{pf}In the proof of Theorem \ref{3.2.6} in \cite{BEGG} it was shown that $\Lambda$ determining the minimal genera  must have  signature $(0;+;[2, N/2]; \{(\,)\})$
in the case of $p_+^+(N)$ and  $(0;+;[N/2]; \{(2,2)\})$ in the case of
$p_+^-(N)$. In the first case there is a unique class of
such action by Theorem \ref{d21}. In the second case the action is unique
by Theorem \ref{d12}.
\end{pf}

\begin{thm}[\cite{BEGG}, Theorem 3.2.7]\label{3.2.7}
Let $4$ divide $N$. Then
$p_+^+(N)=N/2, p_+^-(N)=N/2+1$.
 Moreover any $N$-minimal surface
from ${\mathcal K}_+^+(N)$ has one boundary component, whilst
any such surface from ${\mathcal K}_+^-(N)$ has
$2$ boundary components if $8$ divides $N$, and otherwise $4$ boundary components.
\end{thm}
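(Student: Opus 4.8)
The plan is to recast the minimum-genus problem as the problem of minimizing the orbifold area $\mu(\Lambda)$. By the Hurwitz-Riemann formula every $\zz_N$-action on a bordered surface of algebraic genus $p$ comes from a BSK-epimorphism $\theta\colon\Lambda\to\zz_N$ with $p = 1 + N\mu(\Lambda)$, so minimizing $p$ is the same as minimizing $\mu(\Lambda)$. Since a surface realizing $p_+^+(N)=N/2$ or $p_+^-(N)=N/2+1$ satisfies $p-1<N$, i.e. $\mu(\Lambda)<1$, Lemma \ref{big o-r} applies to every minimizer and restricts $\Lambda$ to the ten signatures listed there. The strategy is then to run through those ten signatures, read off $\mu(\Lambda)$ from the area formula, and use Lemma \ref{sign k} together with the existence criteria of Theorems \ref{d6}--\ref{d222} to decide, under the hypothesis $4\mid N$, whether the surface $S=\mathcal H/\ker\theta$ can be orientable and whether the action is orientation-preserving (so $S\in\mathcal K_+^+$) or orientation-reversing (so $S\in\mathcal K_+^-$).

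For the orientation-preserving case I would isolate the signatures giving an orientation-preserving action on an orientable surface, namely (7) $(0;+;[m,n];\{(\,)\})$ and the orientation-preserving branch of (8). For (7) one has $\mu(\Lambda)=1-1/m-1/n$, and minimizing $\mu$ means maximizing $1/m+1/n$ subject to $N=\mathrm{lcm}(m,n)$. The key arithmetic point is that $4\mid N$ forces the optimal choice to be $m=2,\ n=N$: the competing value $n=N/2$ is even and fails $\mathrm{lcm}(2,n)=N$. This gives $\mu=1/2-1/N$, hence $p_+^+(N)=N/2$, while branch (8) only yields the larger value $\mu=1/2$. Finally, for $(m,n)=(2,N)$ one has $t=(2,N)=2$, so the constraint $k\mid t/(t,N/t)=2/(2,N/2)=1$ from Theorem \ref{d21} forces $k=1$; thus the minimal surface has a single boundary component, as claimed.

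For the orientation-reversing case the decisive observation is that $4\mid N$ destroys all the cheapest candidates. Signatures (2), (3), (4), (5) and the orientation-reversing branch of (8) each carry a parity condition --- either ``$N/2$ odd'' or ``$N=2m$ with $m$ odd'' --- that is incompatible with $4\mid N$, so by Lemma \ref{sign k}(a) the corresponding surface is forced to be non-orientable and drops out of $\mathcal K_+^-$. The only orientation-reversing signature surviving on an orientable surface is (6) $(1;-;[m];\{(\,)\})$ with $m=2$, for which $\mu(\Lambda)=(m-1)/m=1/2$ and therefore $p_+^-(N)=N/2+1$. To obtain the boundary count I would apply Theorem \ref{mb1ori} with $m=2$: its existence condition $N=2\,\mathrm{lcm}(2,N/k)$ splits according to the parity of $N/k$, the even case forcing $k=2$ (and $8\mid N$) and the odd case forcing $k=4$ (and $N\equiv4\pmod 8$), which is exactly the stated dichotomy.

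The step I expect to be the main obstacle is precisely this orientation-reversing bookkeeping: one must verify, signature by signature, that the parity hypotheses built into Theorems \ref{ann2}, \ref{mb2}, \ref{d12}, \ref{d14} and \ref{ann1} really do prevent an orientable $S$ when $4\mid N$, so that no signature of area below $1/2$ can contribute to $\mathcal K_+^-$. All the genuine content sits in tracking the parities of $N/2$ and $N/k$ and in detecting non-orientable words in $\ker\theta$ via Lemma \ref{sign k}(a); once this is settled the area values and the boundary counts drop out of the Hurwitz-Riemann formula, and the uniqueness of the realizing action (the accompanying corollary) follows by reading off the class counts $\varphi(2)=1$ and $\lceil\varphi(t)/2\rceil=1$ from Theorems \ref{d21} and \ref{mb1ori}.
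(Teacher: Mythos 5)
The paper does not actually prove Theorem \ref{3.2.7}: the statement is imported verbatim from \cite[Theorem 3.2.7]{BEGG}, and the only argument given in this subsection is the proof of the corollary following it, which borrows from \cite{BEGG} the fact that the extremal actions correspond to the signatures $(0;+;[2,N];\{(\,)\})$ and $(1;-;[2];\{(\,)\})$ and then invokes Theorems \ref{d21} and \ref{mb1ori}. So there is no internal proof to compare yours against; what you propose is a reconstruction using the paper's own machinery, in the same style as the paper's Propositions \ref{min genus of N=2} and \ref{min genus of odd prime}, and its arithmetic core is sound: minimizing $\mu(\Lambda)$ over the list of Lemma \ref{big o-r}, signature (7) with periods $(2,N)$ gives $p=N/2$ with $k=1$ forced by Theorem \ref{d21}, signature (6) with $m=2$ gives $p=N/2+1$, and your reading of the conditions of Theorem \ref{mb1ori} yields exactly $k=2$ for $8\mid N$ and $k=4$ for $N\equiv 4\pmod 8$.

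The genuine gap is the step you describe as deciding ``whether the action is orientation-preserving or orientation-reversing'' by means of Lemma \ref{sign k} and Theorems \ref{d6}--\ref{d222}: none of those results says anything about the orientation character of the action. Lemma \ref{sign k}(a) decides only the orientability of $S$, and the classification theorems record the surface and the number of conjugacy classes. Yet this is where the content of Theorem \ref{3.2.7} lies. To prove $p_+^-(N)\ge N/2+1$ you must show that a signature-(7) action (which has $p=N/2$) can never have an orientation-reversing generator; and for the boundary-count claim you must show, for instance, that the signature-(8) actions produced by Theorem \ref{ann1}(2) with $m=2$, $n=1$, $k=2$ --- which exist for every $N$ divisible by $4$, on an orientable surface with $p=N/2+1$ and two boundary components --- are orientation-preserving, since otherwise they would be $N$-minimal surfaces in $\mathcal{K}_+^-(N)$ with $k=2$ even when $N\equiv 4\pmod 8$. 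Neither follows from anything you cite. The missing ingredient is the (standard) criterion: when $S=\mathcal{H}/\ker\theta$ is orientable, sending $\theta(\lambda)$ to the orientation character of $\lambda$, for words $\lambda$ not involving the reflections contained in $\ker\theta$, is a well-defined homomorphism $\mathbb{Z}_N\to\mathbb{Z}_2$ (well-definedness is precisely the absence of non-orientable words in $\ker\theta$), and the action is orientation-preserving if and only if this homomorphism is trivial. With it, signatures (7), (8)-case-(2), (9a), (9b) are orientation-preserving (the group is generated by images of orientation-preserving canonical generators, all reflections mapping to $0$), while signature (6) on an orientable surface is orientation-reversing ($\theta(d)$ is odd there, so every generator of $\mathbb{Z}_N$ is the image of an orientation-reversing word). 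A smaller slip: in minimizing over signature (7) you eliminate only the competitor $(2,N/2)$; the pairs with $m\ge 3$ must also be excluded, and in fact for $N=12$ the pair $(3,4)$ attains the same minimum $p=6$ (also with $k=1$, so the theorem survives, but ``the optimal choice is $m=2$, $n=N$'' is not literally true), and signatures (1), (9a), (9b), (10a), (10b) should be dismissed explicitly as well, by area or by their constraints on $N$.
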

\begin{cor}
The actions realizing $p_+^+(N)$ and $p_+^-(N)$ given in Theorem
$\ref{3.2.7}$ are unique up to topological conjugation.
\end{cor}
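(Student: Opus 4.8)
The plan is to follow verbatim the strategy of the two preceding corollaries: first extract from the proof of Theorem \ref{3.2.7} in \cite{BEGG} the signatures of the NEC-groups $\Lambda$ realizing the minimal genera $p_+^+(N)$ and $p_+^-(N)$, and then invoke the relevant classification results of Section \ref{sec:statement} to show that each such signature carries exactly one equivalence class of BSK-maps, hence exactly one topological type of action.

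For $p_+^+(N)=N/2$ I expect the minimizing group to have signature $(0;+;[2,N];\{(\,)\})$, a $2$-punctured disc. This signature has $\mu(\Lambda)=1/2-1/N$, so by the Hurwitz-Riemann formula the algebraic genus is $N/2$, and one checks that it forces $k=1$ boundary component. To count the actions I would apply Theorem \ref{d21} with $m=2$ and $n=N$, so that $t=(2,N)=2$ and $N/t=N/2$; since $4\mid N$ the number $N/2$ is even, whence the largest divisor $C$ of $t/k=2$ coprime to $Nk/t=N/2$ is $C=1$, and the first formula of Theorem \ref{d21} gives $\varphi(t/kC)\psi(C)=\varphi(2)=1$ equivalence class. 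This settles the orientation-preserving case.

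For $p_+^-(N)=N/2+1$ the surface is orientable but the action reverses orientation, and I expect the minimizing group to have signature $(1;-;[2];\{(\,)\})$, a $1$-punctured M\"obius band with cone point of order $m=2$; here $\mu(\Lambda)=1/2$, giving algebraic genus $N/2+1$, while the glide reflection $d$ maps to a nonzero element, so the action indeed reverses orientation. The counting is then governed by Theorem \ref{mb1ori}. The point needing care is that the number $k$ splits into two subcases: the constraint $N=2\,\mathrm{lcm}(2,N/k)$ together with the parity condition of Theorem \ref{mb1ori} admits only $k=2$ when $8\mid N$ and only $k=4$ when $4\mid N$ but $8\nmid N$, which matches the boundary counts asserted in Theorem \ref{3.2.7}. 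In the first subcase $t=(2,N/2)=2$ and in the second $t=(2,N/4)=1$, so in both subcases Theorem \ref{mb1ori} yields $\lceil\varphi(t)/2\rceil=1$ conjugacy class.

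The main obstacle is not the counting, which is routine once the signatures are identified, but rather the fact that these two signatures are the only ones realizing the respective minima; this is precisely the content of the proof of Theorem \ref{3.2.7} in \cite{BEGG}, on which I would rely, exactly as the previous two corollaries relied on the proofs of Theorems \ref{3.2.5} and \ref{3.2.6}. A secondary subtlety is confirming the orientation-reversing character of the M\"obius-band action and checking that the two subcases for $k$ dovetail with the parity hypotheses $8\mid N$ versus $8\nmid N$. Once this bookkeeping is done, the conclusion that both $p_+^+(N)$ and $p_+^-(N)$ are realized by a unique topological type of action follows immediately.
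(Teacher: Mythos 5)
Your proposal is correct and follows essentially the same route as the paper: it extracts from \cite{BEGG} the signatures $(0;+;[2,N];\{(\,)\})$ and $(1;-;[2];\{(\,)\})$ realizing $p_+^+(N)$ and $p_+^-(N)$, and then invokes Theorems \ref{d21} and \ref{mb1ori} respectively to conclude uniqueness. The only difference is that you carry out explicitly the arithmetic (computing $t$, $C$, and the case split $k=2$ versus $k=4$ according to $8\mid N$ or not) that the paper's proof leaves implicit, and your computations are accurate.
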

\begin{pf}Also here it was shown  in  \cite{BEGG} that  the action realising $p_+^+(N)$ is given just by an NEC-group $\Lambda$ with signature $(0;+;[2, N]; \{(\,)\})$
and so the action is unique by Theorem \ref{d21}.
In turn the signature $(1;-;[2]; \{(\,)\})$ is the unique  one realising $p_+^-(N)$
and so this action is unique by Theorem \ref{mb1ori}.
\end{pf}

\subsection{On topological type of
cyclic actions of a given non-prime   order on  bordered non-orientable surfaces of minimal genus}\label{mg-}
\begin{thm}[\cite{BEGG}, Theorem 3.2.8]\label{3.2.8}
Let $N$ be a non-prime odd integer and let $p$ be the smallest prime
dividing $N$. Then $p_-(N)= (p-1){N}/{p} +1$ and the corresponding
$N$-minimal surface has $1$ boundary component if $p^2$ divides $N$,
and $1$ or $p$ boundary components if $p^2$ does not divide $N$ and
both of these cases can actually occur.
\end{thm}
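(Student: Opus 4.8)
The plan is to recast the minimum-genus problem as a minimization of the normalized area $\mu(\Lambda)$ over all BSK-epimorphisms $\theta\colon\Lambda\to\Z_N$ whose kernel $\Gamma$ is a \emph{non-orientable} bordered surface group. If $S=\mathcal H/\Gamma$ has algebraic genus $p(S)$, then $\mu(\Gamma)=p(S)-1$, and since $\mu(\Gamma)=N\mu(\Lambda)$ by the Hurwitz--Riemann formula \eqref{HR}, we get $p(S)=1+N\mu(\Lambda)$. Minimizing the genus is therefore the same as minimizing $\mu(\Lambda)$ subject to the existence of a non-orientable kernel.

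First I would cut the list of admissible signatures down to two. Any action with $\mu(\Lambda)\ge 1$ gives $p(S)\ge 1+N>(p-1)N/p+1$, so a minimal surface has $\mu(\Lambda)<1$ and its signature occurs in Lemma \ref{big o-r}. As $N$ is odd, Lemma \ref{aut rep} forbids non-empty period cycles and all proper periods are odd, which rules out signatures $(1)$--$(5)$, $(9a)$, $(9b)$, $(10a)$, $(10b)$, each of which carries a corner or a proper period $2$. Signature $(7)$ yields only orientable surfaces by Theorem \ref{d21}. Hence the only candidates are $(6)$, namely $(1;-;[m];\{(\,)\})$, and $(8)$, namely $(0;+;[m];\{(\,),(\,)\})$, and both have $\mu(\Lambda)=(m-1)/m$. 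Since $m\mid N$ and $m>1$, every prime factor of $m$ is $\ge p$, so $m\ge p$ and $(m-1)/m\ge(p-1)/p$, with equality exactly when $m=p$. This already yields $p_-(N)=1+N(p-1)/p=(p-1)N/p+1$, once existence for $m=p$ is confirmed.

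It remains to settle existence and to count boundary components, which I would do through Theorems \ref{mb1nonori} and \ref{ann1-nonori}. For $m=p$ both reduce to the single condition $N=\mathrm{lcm}(p,N/k)$, with $k\mid N$ equal to the number of boundary components of $S$. Writing $d=N/k$ and analysing $\mathrm{lcm}(p,d)=N$ through $p$-adic valuations, note that $v_q(\mathrm{lcm}(p,d))=v_q(d)$ for every prime $q\ne p$, forcing $v_q(d)=v_q(N)$, while the $p$-part gives $\max(1,v_p(d))=v_p(N)$. If $p^2\mid N$ then $v_p(N)\ge 2$ forces $v_p(d)=v_p(N)$, so $d=N$ and $k=1$. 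If $p^2\nmid N$ then $v_p(N)=1$ permits $v_p(d)\in\{0,1\}$, so $d\in\{N/p,N\}$ and $k\in\{p,1\}$.

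Finally, when $p^2\nmid N$ I would check that both values are genuinely attained by exhibiting the two epimorphisms directly: for $k=1$ one has $\mathrm{lcm}(p,N)=N$, and for $k=p$ the integers $p$ and $N/p$ are coprime so $\mathrm{lcm}(p,N/p)=N$; in either case Theorem \ref{ann1-nonori} (equivalently \ref{mb1nonori}) produces a non-orientable BSK-map, completing the argument. I expect the first step to be the only real obstacle: establishing that for odd $N$ a non-orientable quotient can arise \emph{only} from signatures $(6)$ and $(8)$, which rests on combining the parity restriction of Lemma \ref{aut rep} with the orientability verdict of Theorem \ref{d21} for signature $(7)$.
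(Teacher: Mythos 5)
Your argument is correct, but it necessarily takes a different route from the paper, for the simple reason that the paper does not prove this statement at all: Theorem \ref{3.2.8} is quoted from \cite{BEGG}, and the paper only extracts from the proof given there the fact (used in the Corollary that follows) that the minimum is realized precisely by NEC-groups with signature $(1;-;[p];\{(\,)\})$. What you do instead is re-derive the theorem from the paper's own machinery: the reduction $p(S)=1+N\mu(\Lambda)$ via the Hurwitz--Riemann formula, the small-area list of Lemma \ref{big o-r}, the parity restrictions of Lemma \ref{aut rep}, the orientability verdict of Theorem \ref{d21} for signature $(7)$, and the existence criteria of Theorems \ref{mb1nonori} and \ref{ann1-nonori}. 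This makes the result self-contained within the paper, and your valuation analysis of $\mathrm{lcm}(p,N/k)=N$ recovers the boundary-component dichotomy cleanly; what the citation to \cite{BEGG} buys the authors, by contrast, is the sharper statement that \emph{only} the M\"obius-band signature realizes the minimum, which is exactly what their Corollary needs to count conjugacy classes.

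One caveat you should be aware of: your parenthetical reliance on Theorem \ref{ann1-nonori} is safe here only because it is redundant. For odd $N$ every reflection of $\Lambda$ maps to $0$ in $\Z_N$, so for the signature $(0;+;[m];\{(\,),(\,)\})$ both $c_1$ and $c_2$ lie in $\ker\theta$; hence no non-orientable word can exist and, by part (a) of Lemma \ref{sign k}, the covering surface is forced to be orientable. In other words, the existence half of Theorem \ref{ann1-nonori} implicitly requires $N$ even (its proof constructs an epimorphism with $\theta(c_2)=N/2$), which is consistent with the fact quoted in the paper's Corollary that only $(1;-;[p];\{(\,)\})$ realizes $p_-(N)$. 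Your proof survives because the lower bound and the necessity direction used for counting boundary components are unaffected, and existence for $k=1$ and (when $p^2\nmid N$) $k=p$ is already guaranteed by Theorem \ref{mb1nonori} alone; had you relied solely on Theorem \ref{ann1-nonori} for existence, there would be a genuine gap.
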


\begin{cor}
The actions realizing $p_-(N)$ given in Theorem $\ref{3.2.8}$ are
unique up to topological conjugation if $k=p$ and there are $(p-1)/2$ types of action for $k=1$.
\end{cor}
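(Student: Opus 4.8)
The plan is to imitate the structure of the preceding corollaries: first pin down the signature of the NEC-group $\Lambda$ realising $p_-(N)$, and then read off the number of topological types from Theorem \ref{mb1nonori}. As in \cite{BEGG}, I would record that the minimum genus is realised by an NEC-group with signature $(1;-;[p];\{(\,)\})$, where $p$ is the smallest prime dividing $N$; but I would also verify this intrinsically, since the verification explains why no spurious extra classes appear. Because $N$ is odd, Lemma \ref{aut rep} forbids non-empty period cycles, so the only candidate signatures from Lemma \ref{big o-r} are (6),(7),(8),(9a),(9b). Among these, Theorems \ref{d21}, \ref{d31} and \ref{d32} force the quotient surface to be orientable in cases (7),(9a),(9b), and in case (8) non-orientability of $\ker\theta$ would require $\theta(c_2)=N/2$, which is impossible for odd $N$. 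Hence signature (6), $(1;-;[m];\{(\,)\})$, is the only one producing a non-orientable surface, and by Theorem \ref{mb1nonori} its algebraic genus $1+(m-1)N/m$ is minimised by the smallest admissible $m$, namely $m=p$, giving exactly $1+(p-1)N/p=p_-(N)$.

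Next I would apply Theorem \ref{mb1nonori} with $m=p$. Since $N$ is odd, that theorem yields $\lceil\varphi(t)/2\rceil$ conjugacy classes, where $t=(p,N/k)$, and the admissible boundary numbers $k$ are those with $k\mid N$ and $N=\mathrm{lcm}(p,N/k)$. A short valuation computation on the latter equation shows that $k$ must be a power of $p$, that $k=1$ is always admissible, and that $k=p$ is admissible exactly when $p^2\nmid N$; this reproduces the boundary count of Theorem \ref{3.2.8}. For $k=1$ one gets $t=(p,N)=p$, so the number of types is $\lceil\varphi(p)/2\rceil=(p-1)/2$, which is an integer because $p$ is odd. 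For $k=p$ one has $p^2\nmid N$, whence $t=(p,N/p)=1$ and the number of types is $\lceil\varphi(1)/2\rceil=1$, i.e.\ the action is unique. This gives precisely the two assertions of the corollary.

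The arithmetic bookkeeping for the admissible $k$ and the two evaluations of $\lceil\varphi(t)/2\rceil$ are entirely routine. The genuinely delicate step is the first one: ensuring that, for odd $N$, no signature other than (6) contributes a non-orientable surface of the minimal algebraic genus. The subtlety is that signature (8) has the \emph{same} genus formula $1+(m-1)N/m$ as (6), so it is essential to observe that (8) cannot yield a non-orientable quotient when $N$ is odd; were this overlooked, an extra $\varphi(p)=p-1$ classes (counted by Theorem \ref{ann1-nonori}) would be added in error, contradicting the stated count $(p-1)/2$ for $k=1$.
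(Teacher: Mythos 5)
Your proposal is correct and follows essentially the same route as the paper: pin the minimizing NEC-signature down to $(1;-;[p];\{(\,)\})$ and read the class counts off Theorem \ref{mb1nonori}, obtaining $\lceil\varphi(p)/2\rceil=(p-1)/2$ classes for $k=1$ (where $t=(p,N)=p$) and a unique class for $k=p$ (where $p^2\nmid N$ forces $t=(p,N/p)=1$). The only difference is that the paper gets the signature determination by citing the proof in \cite{BEGG}, whereas you re-derive it in-house from Lemma \ref{big o-r} together with orientability considerations (your observation that signature (8), $(0;+;[m];\{(\,),(\,)\})$, cannot produce a non-orientable surface for odd $N$ because non-orientability would force $\theta(c_2)=N/2$ is sound, and is exactly the point that prevents overcounting); this extra verification and the explicit valuation argument for the admissible $k$ are harmless elaborations of the same argument.
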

\begin{pf}In the proof of Theorem \ref{3.2.9} in
\cite{BEGG} it was shown that $\Lambda$ realizing the minimum
genus must have  signature $(1;-;[p]; \{(\,)\})$.
So the corollary follows from Theorem \ref{mb1nonori}.
\end{pf}

\begin{thm}[\cite{BEGG}, Theorem 3.2.9]\label{3.2.9}
Let $N\neq 2$ be even. Then $p_{-}(N)=N/2$ and any $N$-minimal
surface from ${\mathcal K}_{-}(N)$ is a projective plane with $N/2$ boundary components.
\end{thm}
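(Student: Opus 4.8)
The plan is to deduce the statement from the classification of Section~\ref{sec:statement} together with the orbifold list of Lemma~\ref{big o-r}. Since we are minimising the algebraic genus, every competing surface will turn out to satisfy $p\le N/2<N+1$, so $N>p-1$ holds and Lemma~\ref{big o-r} applies: the quotient orbifold of any $\zz_N$-action on an $N$-minimal non-orientable surface is one of the ten listed types. The argument therefore reduces to reading off, case by case, the smallest algebraic genus of a non-orientable surface that each type can produce.

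First I would establish the upper bound $p_-(N)\le N/2$. Putting $m=N$, which is even by hypothesis, into the first alternative of Theorem~\ref{d12} yields a $\zz_N$-action on the $N/2$-holed projective plane. This surface is non-orientable with topological genus $1$ and $k=N/2$ boundary components, so its algebraic genus is $\varepsilon g+k-1=1+N/2-1=N/2$; hence it lies in $\mathcal{K}_-(N)$ and $p_-(N)\le N/2$.

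For the lower bound and the identification of the minimal surface I would run through the ten signatures of Lemma~\ref{big o-r}, using the corresponding theorems of Section~\ref{sec:statement} to list the non-orientable surfaces that occur. The cases governed by Theorems~\ref{d6}, \ref{d21}, \ref{d31}, \ref{d32} and~\ref{ann1} yield only orientable surfaces (or force $N=2$) and are discarded. The remaining non-orientable surfaces have algebraic genus $N/2+1$ (the Klein bottles and the $(N/2+1)$-holed projective plane of Theorems~\ref{ann2} and~\ref{mb2}), $N$ (the $N$-holed projective plane of Theorem~\ref{d14}), or exactly $1+(m-1)N/m$, which is $\ge 1+N/2$ for every admissible $m\ge 2$ (the families of Theorems~\ref{mb1nonori}, \ref{ann1-nonori} and~\ref{d221}, together with the larger fixed value coming from Theorem~\ref{d222}). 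Only the $N/2$-holed projective plane of Theorem~\ref{d12} reaches $N/2$. Comparing these values gives $p_-(N)=N/2$ and shows that an $N$-minimal surface in $\mathcal{K}_-(N)$ must be the $N/2$-holed projective plane.

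The computations themselves are routine applications of the Hurwitz-Riemann formula, already recorded in the cited theorems; the hard part is organisational, namely making the case analysis exhaustive and verifying that Theorem~\ref{d12} is the sole source of non-orientable surfaces of algebraic genus $N/2$, while every other admissible orbifold forces $p\ge N/2+1$. I would keep this transparent by tabulating the minimal non-orientable genus against the ten signatures of Lemma~\ref{big o-r}, which settles both the value $p_-(N)=N/2$ and the uniqueness of the topological type in one stroke.
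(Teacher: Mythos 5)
Your argument is correct, but it is genuinely different from what the paper does: the paper does not prove this statement at all. It is Theorem 3.2.9 of \cite{BEGG}, imported as an external result, and the paper's only contribution at this point is the subsequent corollary on uniqueness of the action, proved by quoting from \cite{BEGG} that the extremal signature must be $(0;+;[N];\{(2,2)\})$ and then invoking Theorem \ref{d12}. What you have done instead is re-derive the cited result from the paper's own machinery: the upper bound from the existence half of Theorem \ref{d12} with $m=N$, and the lower bound by observing that any competitor satisfies $p\le N/2$, hence $N>p-1$, so Lemma \ref{big o-r} forces its quotient orbifold into the ten listed types, after which the classification theorems give the minimal non-orientable algebraic genus case by case. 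Your enumeration is exhaustive and the genus values check out: $N/2+1$ for the Klein bottles and the $(N/2+1)$-holed projective plane of Theorems \ref{ann2} and \ref{mb2}; $N$ for the $N$-holed projective plane of Theorem \ref{d14}; $1+(m-1)N/m\ge 1+N/2$ for the families of Theorems \ref{mb1nonori}, \ref{ann1-nonori} and \ref{d221}; algebraic genus $N$ for the $m=4$ case of Theorem \ref{d222}; and the remaining types produce only orientable surfaces or force $N=2$. So Theorem \ref{d12} is indeed the unique source of the value $N/2$, which settles both the minimum and the uniqueness of the minimal surface. Notably, this is precisely the strategy the paper itself uses for \emph{prime} $N$ (Propositions \ref{min genus of N=2} and \ref{min genus of odd prime}); you have extended that treatment to the even case that the paper merely cites. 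What your route buys is self-containedness, showing the Section \ref{sec:statement} classification subsumes the minimum-genus computation of \cite{BEGG} for cyclic groups; what the citation buys is brevity and independence from the paper's heavier theorems.

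One caveat you should make explicit: your lower bound tacitly relies on the standing convention, declared at the start of Section \ref{sec:minmax}, that the algebraic genus satisfies $p\ge 2$. This is not cosmetic. The M\"obius band (algebraic genus $1$) admits self-homeomorphisms of every finite order, so without this restriction $p_-(N)=N/2$ would simply be false; moreover, Lemma \ref{big o-r} applies only to surfaces uniformized by bordered surface NEC-groups, i.e.\ to $p\ge 2$, so your phrase \emph{every competing surface} must be read as every competitor of algebraic genus at least $2$. Since the paper fixes this convention, this is a presentational point rather than a gap, but it is exactly the kind of boundary case a referee would flag.
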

\begin{cor}
The actions realizing $p_{-}(N)$ given in Theorem $\ref{3.2.9}$ is
unique up to topological conjugation.
\end{cor}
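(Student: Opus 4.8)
The plan is to mirror the proofs of the preceding corollaries: I first read off from \cite{BEGG} the signature of the NEC-group $\Lambda$ that forces the minimal non-orientable genus, and then invoke the appropriate classification result from Section \ref{sec:statement} to count the topological types of the corresponding action.

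To pin down the signature I would argue as follows. By Theorem \ref{3.2.9} the $N$-minimal surface in $\mathcal{K}_-(N)$ is a projective plane with $N/2$ boundary components, so its algebraic genus is $p=\varepsilon g+k-1=1+N/2-1=N/2$. Writing $\Gamma=\ker\theta$ for the associated bordered surface group and applying the Hurwitz--Riemann formula (\ref{HR}), I obtain $\mu(\Lambda)=(p-1)/N=1/2-1/N$. Running through the ten signatures of Lemma \ref{big o-r}, this value of $\mu(\Lambda)$, together with the requirements that $S$ be non-orientable with exactly $N/2$ boundary components, singles out signature $(4)$, namely $(0;+;[N];\{(2,2)\})$; this is precisely the signature exhibited in the proof of Theorem \ref{3.2.9} in \cite{BEGG}.

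Once the signature is identified, the corollary is immediate from Theorem \ref{d12}. Since $N$ is even and the unique cone point has order $m=N$, we are in the first alternative of that theorem ($m$ even, $N=m$), which simultaneously produces the $N/2$-holed projective plane and guarantees that the action is unique up to topological conjugation. The only step requiring care is the identification of the signature, and here the main point is that Theorem \ref{d12} really is the sole source of such an action: a glance at Theorems \ref{d6}--\ref{d222} shows that every non-orientable quotient orbifold other than the one of Theorem \ref{d12} forces algebraic genus strictly greater than $N/2$, so the minimal-genus action is governed by Theorem \ref{d12} alone.
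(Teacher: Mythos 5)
Your proof is correct and takes essentially the same route as the paper: identify the relevant NEC-group signature as $(0;+;[N];\{(2,2)\})$ and then invoke the uniqueness statement of Theorem \ref{d12} for even $m=N$. The only difference is that the paper simply cites the proof of Theorem 3.2.9 in \cite{BEGG} for the signature, while you additionally re-derive it from the Hurwitz--Riemann formula, Lemma \ref{big o-r}, and the non-orientability requirement (which correctly eliminates the competing signature $(0;+;[m,n];\{(\,)\})$ of the same area via Theorem \ref{d21}); this is a harmless, self-contained strengthening of the same argument.
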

\begin{pf}Also here it was shown in \cite{BEGG},  that $\Lambda$ must be an NEC-group with the signature $(0;+;[N]; \{(2,2)\})$
and so our Corollary follows from Theorem \ref{d12}.
\end{pf}

\subsection{On topological type of actions of a prime order $N$ on surfaces of minimal genus} Observe that all results from the previous section concerning minimal genus were formulated and proved for $N$ being non-prime. For prime $N$, more general results concerning the minimum genus $p_+^+(N,k)$, $p_+^-(N,k)$ and $p_-(N,k)$  of surfaces  with specified number $k$ of boundary components
are given in \cite{BEGG}.
 These functions are periodic with respect to $k$, and so their knowledge obviously gives an effective way to solve the minimum genus problem by simply taking the minimum of $p_\ast^\ast(N,k)$ for varying $k$,
and in this way the problem was  solved in
 \cite{BEGG}.
 One can however calculate the minimum genus for a given prime $N$ directly or using results of the previous; which is more relevant for our purpose which is also  topological classification of actions realizing $p^\ast_\ast$.

\begin{prop}\label{min genus of N=2} We have $p_+^+(2) = p_+^-(2) = p_-(2) =2$. {The topological type of a $\mathbb{Z}_2$-action on a bordered surface of algebraic genus $2$ is determined by the surface and the quotient orbifold. Up to topological conjugacy there are:}
\begin{itemize}
\item
{$2$}  actions realizing $p_+^+(2)${\rm :} $1$  on $1$-holed torus and $1$  on  $3$-holed sphere,
\item
{$4$} actions realizing $p_+^-(2)${\rm :} $2$ on $1$-holed torus
and $2$ on  $3$-holed sphere,
\item
$8$ actions realizing \ $p_-(2)${\rm :}  $3$  on $2$-holed projective plane and $5$ on  $1$-holed Klein bottle.
\end{itemize}
\end{prop}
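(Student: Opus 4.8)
The plan is to reduce the proposition to a finite bookkeeping exercise over the classification theorems of Section~\ref{sec:statement}. The key observation is that for $N=2$ the inequality $N>p-1$ reads $p<3$, so together with the standing convention $p\ge 2$ it forces $p=2$. Hence \emph{every} $\zz_2$-action on a bordered surface of algebraic genus $2$ is described by Lemma~\ref{big o-r} and the corresponding theorems, and the whole statement will follow by specialising those theorems to $N=2$. Moreover $p_+^+(2),p_+^-(2),p_-(2)\ge 2$ is immediate from the convention $p\ge 2$, so it only remains to produce actions at $p=2$ and to enumerate them.

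First I would sweep through the ten signatures of Lemma~\ref{big o-r} with $N=2$ imposed. Since $\theta(x_i)$ must have order dividing $2$, every cone point has order $m=2$, and by Lemma~\ref{aut rep} every link period already equals $2$; consequently the signatures behind Theorems~\ref{d32} and~\ref{d222}, which require $N=\mathrm{lcm}(2,3,m)\ge 6$, are vacuous. For each remaining signature I would substitute $N=2$ into the relevant theorem, read off the surface(s) together with the number of conjugacy classes, and compute $p=\varepsilon g+k-1$. Two outputs must be discarded because they give $p=1<2$: the $1$-holed projective plane produced by Theorem~\ref{d12} and the annulus produced by Theorem~\ref{d21}. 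In every surviving case the multiplicities $\varphi(t)$, $\psi(C)$, $\lceil\,\cdot\,\rceil$ collapse to $1$ (typically $t=1$, or $t=2$ with $N$ even), so each pair (surface, quotient orbifold) carries exactly one action; this is precisely the assertion that the topological type of a genus-$2$ $\zz_2$-action is determined by the surface and the quotient orbifold.

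It then remains to assemble the data. The orientable surfaces of algebraic genus $2$ are the $1$-holed torus $(g,k)=(1,1)$ and the $3$-holed sphere $(0,3)$, while the non-orientable ones are the $2$-holed projective plane $(1,2)$ and the $1$-holed Klein bottle $(2,1)$. Sorting the surviving actions by surface, and, for the orientable ones, by the orientation behaviour of the generator, I expect to find: on the $1$-holed torus one orientation-preserving action (Theorem~\ref{d31}) and two orientation-reversing ones (Theorems~\ref{ann2} and~\ref{mb2}); on the $3$-holed sphere one orientation-preserving action (Theorem~\ref{ann1}) and two orientation-reversing ones (Theorems~\ref{d6} and~\ref{ann2}); on the $2$-holed projective plane three actions (Theorems~\ref{ann2},~\ref{d14},~\ref{ann1-nonori}); and on the $1$-holed Klein bottle five actions (Theorems~\ref{ann2},~\ref{mb2},~\ref{mb1nonori},~\ref{ann1-nonori},~\ref{d221}). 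Adding up gives $p_+^+(2)$: $1+1=2$, $\;p_+^-(2)$: $2+2=4$ and $\;p_-(2)$: $3+5=8$, and in particular all three minimum genera equal $2$.

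The only non-mechanical point, and the one I expect to be the main obstacle, is to be genuinely exhaustive and to decide, for each orientable surface, whether the generator preserves or reverses orientation. The generator reverses orientation precisely when it acts with a one-dimensional fixed locus, i.e. when the quotient orbifold $S/\zz_2=\mathcal{H}/\Lambda$ carries corner points; equivalently, when $\theta$ sends some canonical reflection $c_{ij}$ to the nontrivial element $N/2$ of $\zz_2$. For the small signatures at hand this is read off directly from the explicit $\theta$ written down in each theorem's proof, which simultaneously confirms the orientation type and guards against double counting. Whether the cover $S$ itself is orientable is controlled separately by part~(a) of Lemma~\ref{sign k}, which keeps the orientable and non-orientable tallies cleanly apart.
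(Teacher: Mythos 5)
Your proposal is correct and takes essentially the same route as the paper's own proof: both reduce to the signatures of Lemma \ref{big o-r} specialised at $N=2$ (you by discarding the two outputs of algebraic genus $1$ coming from Theorems \ref{d12} and \ref{d21}, the paper by keeping exactly the eight signatures of normalized area $1/2$), then apply the ten classification theorems to get one action per pair (surface, quotient orbifold), and tally $2+4+8$ with an identical assignment of theorems to surfaces. Your orientation-reversing criterion via corner points, stated as an equivalence, is not valid for arbitrary $N$ (an orientation-reversing generator need not have fixed curves meeting the boundary, cf.\ Case 1 of Theorem \ref{ann1} or Theorem \ref{mb1ori}), but it is harmless here since those cases are vacuous for $N=2$ and you anyway fall back on reading the explicit BSK-maps, which is what the paper does implicitly.
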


\begin{pf}We shall see that
$p_+^+(2)=p_+^-(2) =p_-(2)=2$ -  the smallest admissible genus. Since in such case $N=2>1=p-1$ all the possible involved signatures appear in Lemma
\ref{big o-r}. We are interested with the ones with the normalized area $1/2$ and we list all of them here for the reader's convenience:
$$
\begin{tabular}{rlrl}
$(1)$&$(0;+;[\;];\{(2,2,2,2,2,2)\})$, & $(2)$&$(0;+;[\;];\{(\,),(2,2)\})$,\\
$(3)$&$(1;-;[\;];\{(2,2)\})$, & $(5)$ &$(0;+;[2];\{(2,2,2,2)\})$,\\
$(6)$&$(1;-;[2];\{(\,)\})$ &$(8)$&$(0;+;[2];\{(\,),(\,)\})$,\\
$(9b)$&$(0;+;[2,2,2];\{(\,)\})$ &$(10b)$&$(0;+;[2,2];\{(2,2)\}).$
\end{tabular}
$$
Now the signature  (1) give rise to a  reflection of the $3$-holed sphere with the disk with $6$ corner  points as the orbit space
which is unique up to topological conjugacy  by Theorem \ref{d6}.
By Theorem \ref{ann2} the signature (2) provides four actions of $\mathbb{Z}_2$: on $1$-holed Klein bottle, $2$-holed projective plane and {orientation-reversing reflections} of $1$-holed torus and $3$-holed sphere.
By Theorem \ref{mb2}  the signature (3) provides  two   actions of $\mathbb{Z}_2$: on $1$-holed Klein bottle  and an orientation-reversing action on $1$-holed torus.
By Theorem \ref{d14} the signature (5) gives rise to one action  on $2$-holed projective plane.
By Theorem \ref{mb1ori} the signature  (6) does not provide any action on bordered orientable surface, whereas by Theorem \ref{mb1nonori} it gives rise to
 one action on $1$-holed Klein bottle.
By Theorem \ref{ann1-nonori}, the signature (8) gives rise to two actions on  $1$-holed Klein bottle and $2$-holed projective plane, whereas by Theorem \ref{ann1} it
gives rise to an orientation-preserving action on $3$-holed sphere.
By Theorem \ref{d31}, the signature (9b) gives rise to the  orientation-preserving action on  $1$-holed torus.
Finally the signature (10b) gives rise to the unique action  on $1$-holed Klein bottle by Theorem \ref{d221}.
Observe also that any two actions corresponding to different signatures are not topologically conjugate.
\end{pf}

Observe now  that for odd $N$, there are no surfaces admitting orientation reversing self-homeomorphisms
of order $N$, and so we have to  look only for $p_-(N)$ and $p_+^+(N)$, and classify topologically all actions realising them.

\begin{prop}\label{min genus of odd prime}
Let $N$ be an odd prime, Then
$p_+^+(N)=N-1$, and
 $p_-(N)=N$.
 Furthermore, in both cases the corresponding surface has $k=N$ or $k=1$ boundary components. {Up to topological conjugation, there are:
\begin{itemize}
\item $2$ actions of order $N$ on  the $N$-holed non-orientable surface of algebraic genus $p_-(N)$. The orbit spaces of these actions are  1-punctured  M\"obius band and   1-punctured annulus;
\item $3(N-1)/2$ actions of order $N$ on the $1$-holed non-orientable surface of algebraic genus $p_-(N)$; $N-1$ with a 1-punctured annulus  and $(N-1)/2$
with a 1-punctured M\"obius band as orbit spaces of the actions;
\item unique action of order $N$ on the $N$-holed orientable surface of algebraic genus $p_+^+(N)$;
\item  $(N-1)/2$  actions of order $N$  on the $1$-holed orientable surface of algebraic genus $p_+^+(N)$.
\end{itemize}
}
\end{prop}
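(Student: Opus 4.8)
The plan is to reduce everything to the classification already carried out in Section~\ref{sec:main}, exploiting that for an odd prime $N$ only three of the signatures of Lemma~\ref{big o-r} can occur. First I would note that by Lemma~\ref{aut rep} no non-empty period cycle appears when $N$ is odd, so among the ten signatures of Lemma~\ref{big o-r} only $(6)$, $(7)$ and $(8)$ survive: the seven signatures carrying corner points are excluded because their period cycles are non-empty, and $(9a)$, $(9b)$ are excluded because $\mathrm{lcm}(2,3,m)$ and $\mathrm{lcm}(2,m)$ are even, contradicting $N$ odd. Using that $N$ is prime I would then pin down the cone-point orders: in each surviving signature the cone-point order $m$ (and in $(7)$ also $n$) must divide $N$ and exceed $1$, hence equal $N$, and in $(7)$ the epimorphism condition $N=\mathrm{lcm}(m,n)$ forces $m=n=N$. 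Substituting these values into the genus formulas of Theorems~\ref{mb1nonori}, \ref{d21} and \ref{ann1-nonori} gives algebraic genus $N$ for $(6)$ and $(8)$, and algebraic genus $1+N(1-2/N)=N-1$ for $(7)$.

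The second step is to read off orientability and deduce the two minimal-genus values. Signature $(7)$ produces only orientable surfaces (Theorem~\ref{d21}); for $(6)$ the orientable alternative of Theorem~\ref{mb1ori} requires $N=2\,\mathrm{lcm}(\dots)$ and is impossible for odd $N$, so $(6)$ is purely non-orientable; and $(8)$ occurs in both flavours. Since $(7)$ is the only source of orientable actions and yields genus $N-1$, while both non-orientable sources $(6)$ and $(8)$ yield genus $N$, these are the only admissible genera, so the minima are attained and $p_+^+(N)=N-1$, $p_-(N)=N$. The constraint $k\mid N$ present in all three theorems, together with $N$ prime, forces $k\in\{1,N\}$, which accounts for the two possible boundary-component counts.

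The final and most delicate step is the enumeration, obtained by specialising the piecewise counting formulas to $t=N$ and $m=n=N$. For the orientable surface $(7)$, Theorem~\ref{d21} in the case $m=n$ gives, when $k=N$, the values $C=1$ and $n/kC=1$, so the ``otherwise'' branch yields $\lceil\varphi(1)\psi(1)/2\rceil=1$; when $k=1$ it gives $C=N$, $\psi(N)=N-2$, and $\lceil\varphi(1)\psi(N)/2\rceil=\lceil(N-2)/2\rceil=(N-1)/2$, using that $N-2$ is odd. For the non-orientable surfaces I would use that $N$ is odd: Theorem~\ref{mb1nonori} gives $\lceil\varphi(t)/2\rceil$ with $t=(N,N/k)$, namely $1$ for $k=N$ (where $t=1$) and $(N-1)/2$ for $k=1$ (where $t=N$), while Theorem~\ref{ann1-nonori} gives $\varphi(t)$, namely $1$ for $k=N$ and $N-1$ for $k=1$. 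Summing the two non-orientable contributions gives $1+1=2$ actions when $k=N$ and $(N-1)+(N-1)/2=3(N-1)/2$ actions when $k=1$, as claimed; actions with different quotient orbifolds are automatically non-conjugate because the orbifold is an invariant of the conjugacy class, so no double counting occurs. The main obstacle I anticipate is the bookkeeping inside Theorem~\ref{d21}: one must correctly evaluate $C$, $\psi(C)$ and the ceiling in the $m=n$ case, and in particular check the parity of $N-2$ to land on $(N-1)/2$ rather than $N/2$.
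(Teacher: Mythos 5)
Your proposal is correct and follows essentially the same route as the paper: for an odd prime $N$ only the signatures $(1;-;[N];\{(\,)\})$, $(0;+;[N,N];\{(\,)\})$ and $(0;+;[N];\{(\,),(\,)\})$ can occur, the genera are $N$, $N-1$, $N$, and the counts come from evaluating Theorems \ref{mb1nonori}, \ref{d21} and \ref{ann1-nonori} at $m=n=N$, $k\in\{1,N\}$, with all your numerical evaluations (including the parity of $N-2$ in the ceiling) checking out against the paper's. The one blemish is your sentence that $(7)$ is ``the only source of orientable actions,'' which contradicts your own earlier, correct, remark that $(8)$ occurs in both flavours (for odd prime $N$ condition (2) of Theorem \ref{ann1} is satisfiable, e.g.\ with $k=2$, $n=1$); this slip is harmless here because those orientable actions live on surfaces of algebraic genus $N>N-1$, so they affect neither the value of $p_+^+(N)$ nor the enumeration of minimal-genus actions, which is exactly how the paper disposes of them.
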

\begin{pf}
Let $\theta\colon \Lambda \to \mathbb{Z}_N$ be a BSK-epimorphism defining an action of $\mathbb{Z}_N$ on a bordered surface.
Then $\Lambda$ has an empty period cycle in order to produce holes in the corresponding surface $X={\mathcal H}/\Gamma$ for $\Gamma = \ker \theta$. Now all periods in $\Lambda$, if exists,   are equal to $N$ and so $(0;+;[N,N];\{(\,)\})$ is the signature of $\Lambda$  with the minimal possible area here.

On the other hand Theorem
 \ref{d21} asserts that such an epimorphism  indeed exist, and the corresponding surface is orientable and   has $N$ or $1$ boundary components. Furthermore, up to topological equivalence, there is unique such action or $\lceil \psi(N)/2\rceil = (N-1)/2$ actions respectively.  This completes the part of the proof concerning $p_+^+(N)$ and also shows that for the study of  $p_-(N)$ and its attainments, we need to consider NEC-groups with  bigger area.

The second smallest area in this case have NEC-groups $\Lambda$ with signatures
$$
(1;-;[N];\{(\,)\})\;  {\rm and} \;  (0;+;[N];\{(\,),(\,)\})
$$
which indeed,  due to the Hurwitz-Riemann formula, concern actions of $\mathbb{Z}_N$ on surfaces of algebraic genus $p=N$. 
In the first case, such action indeed exists  by Theorem \ref{mb1nonori}. Furthermore,  the corresponding surface has $N$ or $1$ boundary components and therefore, up to topological conjugacy, the corresponding action is respectively unique or there are $(N-1)/2$ topological classes of such actions.
The second signature   may realize $\mathbb{Z}_N$-actions  both on orientable (Theorem \ref{ann1}) and non-orientable (Theorem \ref{ann1-nonori})
surfaces and in the latter case either $k=N$ and the action is unique,  or $k=1$ and there are $N-1$ actions
up to topological conjugation
mentioned in Theorem  \ref{ann1-nonori}.
 \end{pf}

\subsection{On topological type of cyclic actions  of   maximal  order  on bordered surfaces of given algebraic genus}\label{mo}
\begin{thm}[\cite{BEGG}, Theorem 3.2.18]\label{3.2.18}
Let $p\geq 2$ be an integer. Then
\begin{itemize}
\item
$N_{-}(p)=2p$, \\[2mm]
\item
$
N_+^+(p)=
\begin{cases}
2(p+1) & {\rm if\ } p \; {\rm is \; even}\;\\
2p & {\rm if\ } p \; {\rm is \; odd} \;
\end{cases}$\\[2mm]
\item
$
N_+^-(p)=
\begin{cases}
2(p+1) & {\rm if\ } p \; {\rm is \; even} \; \\
2(p-1) & {\rm if\ } p \; {\rm is \; odd} \; \;
\end{cases}$
\end{itemize}

\noindent
In particular
$$
\!\!\!\!\!
N_+(p)=
\begin{cases}
2(p+1) & {\rm if\ } p \; {\rm is \; even} \\
2p & {\rm if\ } p \; {\rm is \; odd}
\end{cases}
\;\;\;\; {\rm and} \;\;\ N(p)=
\begin{cases}
2(p+1) & {\rm if\ } p \; {\rm is \; even} \\
2p & {\rm if\ } p \; {\rm is \; odd}
\end{cases}
$$
\end{thm}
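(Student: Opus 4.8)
The plan is to recover this statement from our own classification, reading each maximal order off Lemma~\ref{big o-r} and the Hurwitz--Riemann formula~(\ref{HR}). If $\Z_N$ acts on a bordered surface $S$ of algebraic genus $p$ with quotient orbifold $\mathcal{H}/\Lambda$ and kernel $\Gamma=\ker\theta$, then $\Gamma$ is a bordered surface group with $\mu(\Gamma)=p-1$, so (\ref{HR}) yields $N\cdot\ml=p-1$. Hence, for a fixed $p$, maximising $N$ is the same as minimising $\ml$ over the signatures of Lemma~\ref{big o-r} that produce a surface of the prescribed orientability and an action of the prescribed orientation behaviour. Since Theorems~\ref{d6}--\ref{d222} record, for each of the ten signatures, exactly which surfaces occur, whether the action preserves or reverses orientation, and the identity $p=1+N\ml$, the whole computation reduces to a finite comparison carried out one signature family at a time.

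First I would settle the non-orientable maximum $N_-(p)$. Running through the signatures that yield a non-orientable $S$, the relation $p=1+N\ml$ forces $N\le 2(p-1)$ in every case except the $m$-even branch of the $1$-punctured disc with two corners (signature~$(4)$): there Theorem~\ref{d12} gives $N=m$ and a projective plane with $N/2$ holes, so $p=N/2$ and $N=2p$. Taking $m=2p$ realises this for each $p\ge 2$, and as this is the only non-orientable family with $\ml<1/2$, I would conclude $N_-(p)=2p$.

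Next I would treat the orientation-reversing orientable maximum $N_+^-(p)$. The families whose action carries orientation-reversing elements while $S$ stays orientable are the $m$-odd branch of signature~$(4)$ (Theorem~\ref{d12}, an orientable $N/2$-holed sphere on which the two corner reflections are sent to $N/2$, hence act by reversing orientation) together with the orientable branches of the annulus and M\"obius signatures $(2)$, $(3)$, $(6)$. The latter three all have $\ml\ge 1/2$ and so give $N\le 2(p-1)$, whereas the $m$-odd branch of $(4)$ has $N=2m$ and $p=m-1$, i.e. $N=2(p+1)$, but only for odd $m$ and therefore only for even $p$. This gives $N_+^-(p)=2(p+1)$ for even $p$; for odd $p$ the only admissible family is signature~$(6)$ with $m=2$ (Theorem~\ref{mb1ori}), for which $N=2(p-1)$, and a short analysis of $p\bmod 4$ shows this order is genuinely attained, so $N_+^-(p)=2(p-1)$.

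The orientation-preserving orientable maximum $N_+^+(p)$ is where the real work lies, and it is governed by the $2$-punctured disc (signature~$(7)$, Theorem~\ref{d21}), since every other orientation-preserving orientable family has $\ml\ge 1/2$ and hence $N\le 2(p-1)$. Writing $a=N/m$ and $b=N/n$, the relation $p=1+N\ml$ becomes $N=(p-1)+a+b$, where $a,b$ are coprime divisors of $N$ subject to the ``exactly one even'' restriction of Lemma~\ref{lem-Harv}. The choices $(m,n)=(2,p+1)$ and $(m,n)=(2,2p)$ give $N=2(p+1)$ for even $p$ and $N=2p$ in general, providing the lower bounds; the hard part, and the main obstacle of the whole proof, is the matching upper bound. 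I expect the difficulty to be purely number-theoretic: one must show that for odd $p$ no admissible coprime divisor pair $(a,b)$ of $N$ can push $N$ beyond $2p$, the obstruction being exactly the coprimality together with the parity condition of Lemma~\ref{lem-Harv} (equivalently, the Maclachlan decomposition~(\ref{Maclachlan})). Granting this, $N_+^+(p)=2(p+1)$ for even $p$ and $2p$ for odd $p$, and then $N_+(p)=\max\{N_+^+(p),N_+^-(p)\}$ and $N(p)=\max\{N_+(p),N_-(p)\}$ are immediate, yielding $2(p+1)$ for even $p$ and $2p$ for odd $p$ in both cases.
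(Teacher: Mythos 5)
You should know at the outset that the paper never proves this theorem itself: it is quoted from \cite{BEGG}, and Section \ref{sec:minmax} only derives the uniqueness corollary by combining the extremal signatures found in the proof in \cite{BEGG} with Theorems \ref{d12}, \ref{d21} and \ref{mb1ori}. So your plan --- rederiving the maximum-order formulas from Lemma \ref{big o-r}, the identity $N\ml=p-1$, and the ten classification theorems --- is a genuinely different, self-contained route, and it is viable, since any action of order at least the claimed bounds has $N\geq 2(p-1)>p-1$ and is therefore covered by the ten signatures. Your computation of $N_-(p)$ is complete and correct, and your analysis of $N_+^-(p)$ is correct in substance, although your enumeration of orientation-reversing orientable families omits signature $(1)$, the $m$-odd branch of $(5)$, the branch $(1)$ of Theorem \ref{ann1} for signature $(8)$, and the orientable cases of $(10a)$; all of these have $\ml\geq 1/2$, so your bound $N\leq 2(p-1)$ extends to them, and for odd $p$ one must also note that the orientable branches of $(2)$, $(3)$ force $N/2$ odd, hence $p=N/2+1$ even, so that signature $(6)$ really is the only contender at $2(p-1)$.

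The genuine gap is precisely where you flag it: the upper bound for $N_+^+(p)$ is never proved, only ``expected'' and then ``granted'', and without it $N_+^+(p)$, $N_+(p)$ and $N(p)$ all remain unestablished (the even-$p$ bound $N\leq 2(p+1)$ needs an argument just as much as the odd-$p$ bound $N\leq 2p$). The missing step does not fail --- it is elementary --- but it must be written. With $a=N/m$, $b=N/n$, Theorem \ref{d21} gives $N=\mathrm{lcm}(m,n)$, which is equivalent to $(a,b)=1$; since $m,n\geq 2$ one has $a,b\leq N/2$, and coprimality gives $ab\mid N$, say $N=abc$. The genus relation becomes $abc=(p-1)+a+b$. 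If $c\geq 2$, then $a+b\leq ab+1\leq N/2+1$, whence $N\leq 2p$. If $c=1$, then $p=(a-1)(b-1)$; for odd $p$ both $a-1$ and $b-1$ are odd, so $a,b$ are both even, contradicting $(a,b)=1$, and for even $p$ the maximum of $N=ab$ subject to $(a-1)(b-1)=p$ is $2(p+1)$, attained only at $\{a,b\}=\{2,p+1\}$. This yields $N_+^+(p)\leq 2(p+1)$ for even $p$ and $N_+^+(p)\leq 2p$ for odd $p$; note that the parity condition of Lemma \ref{lem-Harv} is not needed for the upper bound, only for realizability of your lower bounds, which should also be checked explicitly (e.g. $l=N$ works for $(m,n)=(2,p+1)$ and $l=p$ for $(m,n)=(2,2p)$). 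With that paragraph inserted, your proof is complete.
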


\begin{cor}\label{3.2.18 - cor}
All actions realizing the {solutions of the} maximum order problem described in
Theorem $\ref{3.2.18}$ are unique up to topological conjugacy.
\end{cor}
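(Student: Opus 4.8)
The plan is to argue exactly as in the Corollaries to Theorems~\ref{3.2.5}--\ref{3.2.9}. For each of the extremal quantities $N_-(p)$, $N_+^+(p)$ and $N_+^-(p)$ (and hence for $N_+(p)$ and $N(p)$, which are their maxima) I would extract from the proof of Theorem~\ref{3.2.18} in \cite{BEGG} the signature, or signatures, of the NEC-group $\Lambda$ for which the extremal order is attained, and then quote the matching classification theorem from Section~\ref{sec:statement} to count the topological conjugacy classes of the corresponding $\zz_N$-actions. Because the values in Theorem~\ref{3.2.18} depend on the parity of $p$, I would split the discussion by the five functions and, inside each, by $p$ even or odd. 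The only extremal signatures that arise are those of minimal area in Lemma~\ref{big o-r}, namely $(4)$, $(6)$ and $(7)$.

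I expect the bookkeeping to go as follows. The bound $N_-(p)=2p$ is realised by signature $(4)$ of Lemma~\ref{big o-r} with $m=2p$ even, giving the $p$-holed projective plane, so uniqueness is Theorem~\ref{d12}. The bound $N_+^-(p)$ is realised by signature $(4)$ with $m=p+1$ odd, i.e.\ the $(p+1)$-holed sphere carrying an orientation-reversing action, when $p$ is even (Theorem~\ref{d12}), and by signature $(6)$ with $m=2$ when $p$ is odd (Theorem~\ref{mb1ori}). The bound $N_+^+(p)$ is realised by signature $(7)$ with $\{m,n\}=\{2,p+1\}$ for $p$ even and $\{m,n\}=\{2,2p\}$ for $p$ odd (Theorem~\ref{d21}). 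The realisations of $N_+(p)$ and $N(p)$ then coincide with cases already handled.

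In every case I would verify that the cited theorem returns precisely one class. For signature $(4)$ this is immediate from Theorem~\ref{d12}. For signature $(6)$ the count is $\lceil\varphi(t)/2\rceil$ with $t=(m,N/k)$; since $m=2$ forces $t\in\{1,2\}$ and $\varphi(1)=\varphi(2)=1$, this equals $1$. For signature $(7)$ Theorem~\ref{d21} gives $\varphi\big(t/kC\big)\psi(C)$ (the case $m\ne n$), and the relevant greatest common divisors force $k$, $t/kC$ and $C$ into values for which $\varphi\big(t/kC\big)\psi(C)=1$.

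The step I expect to be the main obstacle is the orientation bookkeeping for the signature-$(4)$ actions, which decides whether they contribute to $N_-(p)$ or to $N_+^-(p)$. Here $\Lambda$ carries canonical reflections with $\theta(c_0)=\theta(c_2)=N/2$ and $\theta(c_1)=0$; when $m$ is odd the surface $S$ is orientable by Theorem~\ref{d12}, the orientation-preserving elements form the index-two subgroup of even residues, and the generators of $\zz_N$ are odd and hence orientation-reversing, so $S\in\mathcal{K}_+^-$, whereas when $m$ is even a non-orientable word appears and $S\in\mathcal{K}_-$. Sorting this correctly, and then confirming by means of Lemma~\ref{big o-r} together with the area identity $\mu(\Lambda)=(p-1)/N$ that for each function and each parity no signature of equal or smaller area is overlooked, is the delicate part; once that is settled, each of the counting formulas above collapses to a single class and the Corollary follows.
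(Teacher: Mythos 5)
Your proposal is correct and takes essentially the same route as the paper's proof: both extract from the proof of Theorem 3.2.18 in \cite{BEGG} the extremal signatures --- $(0;+;[2p];\{(2,2)\})$ for $N_-(p)$ and $(0;+;[p+1];\{(2,2)\})$ for $N_+^-(p)$ with $p$ even (Theorem \ref{d12}), $(1;-;[2];\{(\,)\})$ for $N_+^-(p)$ with $p$ odd (Theorem \ref{mb1ori}), and $(0;+;[2,p+1];\{(\,)\})$, $(0;+;[2,2p];\{(\,)\})$ for $N_+^+(p)$ (Theorem \ref{d21}) --- and then invoke the corresponding classification theorems to get a single conjugacy class in each case. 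The additional checks you flag (the orientation bookkeeping for signature $(4)$, and the collapse of the counts $\lceil\varphi(t)/2\rceil$ and $\varphi(t/kC)\psi(C)$ to $1$) are correct and merely make explicit what the paper leaves implicit.
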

\begin{pf}By the proof of Theorem 3.2.18 in \cite{BEGG},
$N_{-}(p)$ for arbitrary $p$ and $N_+^-(p)$ for even $p$ are
realized by NEC-groups with signatures {$(0;+;[2p]; \{(2,2)\})$} and
{$(0;+;[p+1]; \{(2,2)\})$} respectively, and so these actions are unique
by Theorem \ref{d12}. Next, $ N_+^+(p)$ is realized by
signatures {$(0;+;[2,p+1]; \{(\,)\})$} and {$(0;+;[2,2p]; \{(\,)\})$} for $p$
even and odd respectively, {the corresponding surface has one boundary component}, and so these actions are unique by
Theorem \ref{d21}.
 Finally, $ N_+^-(p)$ for odd $p$  is
realized by signature $(1;-;[2]; \{(\,)\})$, {and the corresponding surface has $2$ boundary components if $4$ divides $p-1$ and $4$ boundary components otherwise}, so this action is
unique again by Theorem \ref{mb1ori}.
\end{pf}

\bigskip
\noindent

{\bf Acknowledgements.} The authors thank  the referees for suggestions  concerning the exposition and for comments and questions mentioned in the introduction.

\end{document}